%%%%%%%%%%%%%%%%%%%%%%%%%%%%%%%%%%%%%%%%%%%%%%%%%%%%%%%%%%%%%%%%%%%%%%%%%%%%%%%%
% Schriftgroesse, Layout, Papierformat, Art des Dokuments
%%%%%%%%%%%%%%%%%%%%%%%%%%%%%%%%%%%%%%%%%%%%%%%%%%%%%%%%%%%%%%%%%%%%%%%%%%%%%%%%

%\documentclass[12pt, twoside,a4paper, bibliography=totoc]{scrreprt}
\documentclass[12pt, twoside,a4paper, bibliography=totoc]{article}

%%%%%%%%%%%%%%%%%%%%%%%%%%%%%%%%%%%%%%%%%%%%%%%%%%%%%%%%%%%%%%%%%%%%%%%%%%%%%%%%
% Einstellung der Seitenraender und Zeilenabstand
%%%%%%%%%%%%%%%%%%%%%%%%%%%%%%%%%%%%%%%%%%%%%%%%%%%%%%%%%%%%%%%%%%%%%%%%%%%%%%%%

\usepackage[top=3.5cm, bottom=3.5cm,includeheadfoot]{geometry}
%\usepackage[left=2cm,right=2cm,top=2cm,bottom=2cm,includeheadfoot]{geometry}
%\renewcommand{\baselinestretch}{1.1}

%%%%%%%%%%%%%%%%%%%%%%%%%%%%%%%%%%%%%%%%%%%%%%%%%%%%%%%%%%%%%%%%%%%%%%%%%%%%%%%%
% Abstand vor neuem Kapitel einstellen
%%%%%%%%%%%%%%%%%%%%%%%%%%%%%%%%%%%%%%%%%%%%%%%%%%%%%%%%%%%%%%%%%%%%%%%%%%%%%%%%

%\renewcommand*{\chapterheadstartvskip}{\vspace{0pt}}

%%%%%%%%%%%%%%%%%%%%%%%%%%%%%%%%%%%%%%%%%%%%%%%%%%%%%%%%%%%%%%%%%%%%%%%%%%%%%%%%
% Packages
%%%%%%%%%%%%%%%%%%%%%%%%%%%%%%%%%%%%%%%%%%%%%%%%%%%%%%%%%%%%%%%%%%%%%%%%%%%%%%%%

\usepackage{graphicx} % einfuegen von externen Bilddateien
\usepackage{paralist} % Aufzaehlungen
\usepackage{framed} % Umrandungen von Texten
\usepackage[T1]{fontenc} 
\usepackage[utf8]{inputenc} %\usepackage[latin1]{inputenc}
\usepackage[german=quotes]{csquotes} % Zitieren
\usepackage{amsmath}
\usepackage{ntheorem} %\usepackage{amsthm}
\usepackage{amssymb}
\usepackage{stmaryrd}
\usepackage{holtpolt}
\usepackage{lastpage} %Zum Erfragen der letzten Zeile mit \pageref{LastPage} 
\usepackage{fancyhdr} % Kopf- und Fusszeile
\usepackage{lmodern} % Schriftart
\usepackage[format=plain, justification=centering, font = {it, small}]{caption}
\usepackage{setspace}
\usepackage[arrow, matrix, curve]{xy}

%%%%%%%%%%%%%%%%%%%%%%%%%%%%%%%%%%%%%%%%%%%%%%%%%%%%%%%%%%%%%%%%%%%%%%%%%%%%%%%%
% Schriftart für Text und Ueberschrift
%%%%%%%%%%%%%%%%%%%%%%%%%%%%%%%%%%%%%%%%%%%%%%%%%%%%%%%%%%%%%%%%%%%%%%%%%%%%%%%%

%\setkomafont{sectioning}{\rmfamily\bfseries} 
%\setkomafont{sectioning}{\rmfamily} % Ueberschrift wird nicht fett gedruckt
%\renewcommand*\familydefault{\sfdefault} % Schrift ohne Serifen
%\setcounter{chapter}{-1}

%%%%%%%%%%%%%%%%%%%%%%%%%%%%%%%%%%%%%%%%%%%%%%%%%%%%%%%%%%%%%%%%%%%%%%%%%%%%%%%%
% Einrichtung von Kopf und Fusszeile
% Bemerkungen: 
% \fancyhead[L]{\nouppercase{\leftmark}}, 
% falls Kapitelname im Kopf links angezeigt werden soll
% \renewcommand{\headrulewidth}{0.5pt}
% falls eine Linie oben angezeigt werden soll
% \renewcommand{\footrulewidth}{0.5pt}
% falls eine Linie unten angezeigt werden soll
%%%%%%%%%%%%%%%%%%%%%%%%%%%%%%%%%%%%%%%%%%%%%%%%%%%%%%%%%%%%%%%%%%%%%%%%%%%%%%%%

\pagestyle{fancy}
\fancyhf{}
\fancyhead[OR]{\small{\nouppercase{\leftmark}}}
\fancyhead[EL]{\small{\thema}}
\fancyfoot[OR]{\thepage}
\fancyfoot[EL]{\thepage}

%%%%%%%%%%%%%%%%%%%%%%%%%%%%%%%%%%%%%%%%%%%%%%%%%%%%%%%%%%%%%%%%%%%%%%%%%%%%%%%%
% Stil des Literaturverzeichnisses. 
% Alphadin orientiert sich nach der deutsche Zitiernorm. Weitere Merkmale sind
% Sortierung: Name des Erstautors 
% Referenzierung: bis zu drei Anfangsbuchstaben der Autorennamen, evtl. ein 
% hochgestelltes Pluszeichen, bei mehr als drei Autoren, 
% Jahresangabe: zweistellig
% Bemerkung: Zitieren im Text zum Beispiel durch
% \cite{Hoory06expandergraphs}
%%%%%%%%%%%%%%%%%%%%%%%%%%%%%%%%%%%%%%%%%%%%%%%%%%%%%%%%%%%%%%%%%%%%%%%%%%%%%%%%

\bibliographystyle{alphadin}

%%%%%%%%%%%%%%%%%%%%%%%%%%%%%%%%%%%%%%%%%%%%%%%%%%%%%%%%%%%%%%%%%%%%%%%%%%%%%%%%
% Anfuehrungsstrich fuer unnummerierte Aufzaehlungen festlegen. Mit 
% \renewcommand{\labelitemi}{$\bullet$}
% \renewcommand{\labelitemii}{$\circ$}
% \renewcommand{\labelitemiii}{$\cdot$}
% \renewcommand{\labelitemiv}{$\ast$}
% koennen die Anfuehrungsstriche weiterer Ebenen beeinflusst werden.
%%%%%%%%%%%%%%%%%%%%%%%%%%%%%%%%%%%%%%%%%%%%%%%%%%%%%%%%%%%%%%%%%%%%%%%%%%%%%%%%

%%%%%%%%%%%%%%%%%%%%%%%%%%%%%%%%%%%%%%%%%%%%%%%%%%%%%%%%%%%%%%%%%%%%%%%%%%%%%%%%
% Makros fuer Satz, Lemma, etc. Umgebungen.
%%%%%%%%%%%%%%%%%%%%%%%%%%%%%%%%%%%%%%%%%%%%%%%%%%%%%%%%%%%%%%%%%%%%%%%%%%%%%%%%

\newtheorem{satz}{Satz}[section]

\newtheorem{lemma}[satz]{Lemma}
\newtheorem{definition}[satz]{Definition}
\newtheorem{corollary}[satz]{Corollary}
\newtheorem{theorem}[satz]{Theorem}
\newtheorem{remark}[satz]{Remark}
\newtheorem{example}[satz]{Example}

\newcommand{\qed}{\hfill $\blacksquare$}
\newenvironment{proof}{ \textit{Proof.}}{\qed\vspace{0.35cm}}

%\newtheorem{satz}{Satz}[section]
%\newtheorem{proposition}[satz]{Proposition}
%\newtheorem{lemma}[satz]{Lemma}
%\newtheorem{definition}[satz]{Definition}
%\newtheorem{corollary}[satz]{Corollary}
%\newtheorem{theorem}[satz]{Theorem}
%\newtheorem{remark}[satz]{Remark}
%\newcommand{\qed}{\hfill $\blacksquare$}
%\newenvironment{proof}{\noindent \textit{Proof.}}{\qed\vspace{0.35cm}}

%%%%%%%%%%%%%%%%%%%%%%%%%%%%%%%%%%%%%%%%%%%%%%%%%%%%%%%%%%%%%%%%%%%%%%%%%%%%%%%%
% Makros fuer mathematische Befehle
%%%%%%%%%%%%%%%%%%%%%%%%%%%%%%%%%%%%%%%%%%%%%%%%%%%%%%%%%%%%%%%%%%%%%%%%%%%%%%%%

\newcommand{\R}{\mathbb R}
\newcommand{\K}{\mathcal K}
\newcommand{\N}{\mathbb N}

\newcommand{\Q}{\mathbb Q}
\newcommand{\Z}{\mathbb Z}

\newcommand{\B}{\mathcal B}

%%%%%%%%%%%%%%%%%%%%%%%%%%%%%%%%%%%%%%%%%%%%%%%%%%%%%%%%%%%%%%%%%%%%%%%%%%%%%%%%
% Informationen fuer Titelblatt
%%%%%%%%%%%%%%%%%%%%%%%%%%%%%%%%%%%%%%%%%%%%%%%%%%%%%%%%%%%%%%%%%%%%%%%%%%%%%%%%

\newcommand{\thema}{Aspects of $p$-adic operator algebras}

%%%%%%%%%%%%%%%%%%%%%%%%%%%%%%%%%%%%%%%%%%%%%%%%%%%%%%%%%%%%%%%%%%%%%%%%%%%%%%%%
% Beginn des Dokuments
%%%%%%%%%%%%%%%%%%%%%%%%%%%%%%%%%%%%%%%%%%%%%%%%%%%%%%%%%%%%%%%%%%%%%%%%%%%%%%%%
\setcounter{tocdepth}{1}

\title{Aspects of $p$-adic operator algebras}
\author{Anton Clau\ss nitzer and Andreas Thom}

\date{\today}

\begin{document}

\maketitle

\begin{abstract}
In this article, we propose a $p$-adic analogue of complex Hilbert space and consider generalizations of some well-known theorems from functional analysis and the basic study of operators on Hilbert spaces. We compute the $K$-theory of the analogue of the algebra of compact operators and the algebra of all bounded operators. This article contains a survey on results from the thesis of the first author.
\end{abstract}

%\tableofcontents

\section{Introduction}\label{introduction}

 While there exists a rich literature on $p$-adic functional analysis in general (cf.  Schneider's book \cite{schneider} as a comprehensive source), it seems that only few publications treat $p$-adic operator algebras, their $K$-theory and their application to group rings. In the following article, the authors want to give their contribution to the subject with focus on an $p$-adic analogue of the classical Hilbert space featuring phenomena such as self-duality etc.
This $p$-adic \emph{Hilbert space} $\Q_p(X)$ (sometimes called the restricted product of $\Q_p$ indexed by $X$) is defined as the set of all maps $\xi\colon X\rightarrow\Q_p$ such that $|\xi(x)|_p>1$ holds for only finitely many elements $x\in X$. The space $\Q_p(X)$ is not a $\Q_p$-vector space, but, equipped with the canonical addition, scalar multiplication with scalars in $\Z_p$ and an appropriate topology $\tau$, a locally compact topological $\Z_p$-module. We will introduce a \emph{scalar product} $\langle\ \,, \ \rangle\colon\Q_p(X)\times\Q_p(X)\rightarrow S^1$ on $\Q_p(X)$. It turns out that the Pontryagin dual of $\Q_p(X)$ is isomorphic to $\Q_p(X)$ as a topological group and all characters can be uniquely represented by scalar product with an element of $\Q_p(X)$, this correspondence yielding the isomorphism of $\Q_p(X)$ with its dual. As in the usual Archimedean case, one can define the algebra $\B(\Q_p(X))$ of continuous $\Z_p$-linear operators on $\Q_p(X)$. Using the notion of adjoint operators (cf. Section \ref{adjoint}) and of the operator norm (cf. Section \ref{norm topology}), $\B(\Q_p(X))$ can be given the structure of a complete normed $*$-algebra over $\Z_p$, i.\,e. a Banach-$*$-algebra over $\Z_p$. In analogy with the Archimedean case, it is possible to define a continuous functional calculus for certain operators in $\B(\Q_p(X))$, the so-called \emph{normal contractions} (cf. Section \ref{functional calculus}). The definition is based on Mahler's representation theorem of the continuous functions $\Z_p\rightarrow\Z_p$ as infinite $\Z_p$-linear combinations of binomial coefficients. 

It is possible to define an analogue $\K(\Q_p(X))$ for the ideal of compact operators in a Hilbert space (cf. Section \ref{compact operators in our context}). Furthermore, we will introduce and study a matrix representation of operators in $\B(\Q_p(X))$ (Section \ref{matrix representation}).

In Section \ref{some results on idempotents}, we will be interested in idempotents of the ring $\B(\Q_p(X))$ and its $K$-theory, namely its $K_0$-group. Compared to the usual case of projections in the complex Hilbert space, idempotents and projections in $\B(\Q_p(X))$ are much harder to study). For example, there is in general no projection onto the intersection of images of two given projections etc. But at least, we will show that, as in the Archimedean case, we have the isomorphisms $K_0(\K(\Q_p(X)))\cong\Z$ and $K_0(\B(\Q_p(X)))=0$ (cf. Section \ref{compact K-group} and Section \ref{vanishing K-group}). Interestingly, also the fact that each idempotent in the quotient algebra $\B(\Q_p(X))/\K(\Q_p(X))$ can be lifted to an idempotent in $\B(\Q_p(X))$ remains true, but the proof is different from the analogous Archimedean theorem (cf. Section \ref{Calkin lifting}).

 This article is a short version of the first three chapters in the thesis of one of the authors (cf. \cite{claussnitzer}), and most parts are taken from there. In the last two chapters of \cite{claussnitzer}, the reader can find additional considerations, e.\,g. on the definition of the tensor product of operator algebras acting on $\Q_p(X)$, the application of our approach to the case that $X=\Gamma$ is a countable group, the $p$-adic analogue of the group von Neumann algebra etc.

\section{The $p$-adic analogue of a Hilbert space}
\subsection{The space $\Q_p(X)$ and the topology $\tau$}\label{our space}

 Let $X$ be a countable set. Consider the set 
\begin{align*}
\mathbb{Q}_p(X):=\{\xi\colon X\rightarrow\Q_p; |\xi(i)|_p\leq1\text{ for all but finitely many }i\in X\}. 
\end{align*}
On this set, we define a topology $\tau$ by saying that a set $A\subseteq\Q_p(X)$ is open if for all $P\subseteq X$ with $|P|<\infty$, the set 
\begin{align*}
\left(\prod_{i\in P}\Q_p\times\prod_{j\in X\setminus P}\Z_p\right)\cap A 
\end{align*}
is open in $\prod_{i\in P}\Q_p\times\prod_{j\in X\setminus P}\Z_p$ with respect to the product topology. Note that $\tau$ is the largest topology on $\Q_p(X)$ such that all the inclusions of the form 
\begin{align*}
\prod_{i\in P}\Q_p\times\prod_{j\in X\setminus P}\Z_p\hookrightarrow \Q_p(X) 
\end{align*}
with finite $P\subseteq X$ are continuous. 

Using the terminology of \cite{neukirchschmidtwingberg}, Def. I.1.1.12, the space $\Q_p(X)$ is called the \emph{restricted product} of countably many copies of $\Q_p$ with respect to the open subgroups $\Z_p\subseteq\Q_p$. 

Furthermore, notice the similarity of this construction with the construction of the \emph{adele-rings} in \cite{maninpanchishkin}, chapter 4.3.7. 

 For $x\in X$, we define the element $\delta_x\in\Q_p(X)$ by $\delta_x(x)=1$ and $\delta_x(y)=0$ for $y\in X\setminus\{x\}$.
 
 The following lemma is easy to prove: 

\begin{lemma}\label{convergence in our Hilbert space}
With respect to $\tau$, a sequence $(\xi_n)_{n\in\N}$ in $\Q_p(X)$ converges to $\xi\in\Q_p(X)$ if and only if it converges entrywise to $\xi$ and if the set $\{x\in X; \exists n\in\N\colon|\xi_n(x)|_p>1\}$ is finite. 
\end{lemma}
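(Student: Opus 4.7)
The plan is to work directly from the description of $\tau$ as the final topology on $\Q_p(X)$ for the family of inclusions $\iota_P\colon H_P\hookrightarrow\Q_p(X)$ with $H_P:=\prod_{i\in P}\Q_p\times\prod_{j\in X\setminus P}\Z_p$ and $P\subseteq X$ finite.

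For the easy direction, I would pool the exceptional coordinates: setting $P_0:=\{x\in X\setsep |\xi(x)|_p>1\}$ and $P:=P_0\cup S$, where $S$ is the (by hypothesis finite) set from the statement, every $\xi_n$ and $\xi$ lies in $H_P$. Product-topology convergence in $H_P$ is just componentwise convergence in each factor, with the $\Z_p$-factors inheriting convergence from $\Q_p$ since $\Z_p$ carries the subspace topology. Hence the hypotheses yield $\xi_n\to\xi$ in $H_P$, and continuity of $\iota_P$ transfers this to $\Q_p(X)$.

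For the converse, entrywise convergence is immediate: each evaluation $\mathrm{ev}_x\colon\Q_p(X)\to\Q_p$ becomes, after precomposition with any $\iota_P$, a coordinate projection, and so by the universal property defining $\tau$, $\mathrm{ev}_x$ is continuous. The delicate point is the finiteness of $S$, and my strategy is to construct a $\tau$-open neighborhood of $\xi$ that detects any large coordinate occurring outside $P_0$. Concretely, I would show that $U:=\xi+\prod_{x\in X}\Z_p$ is $\tau$-open by checking $U\cap H_P$ for every finite $P$: this intersection is empty whenever $P\not\supseteq P_0$ (the excluded coordinates force $\eta(x)-\xi(x)\notin\Z_p$) and otherwise equals $\prod_{i\in P}(\xi(i)+\Z_p)\times\prod_{j\in X\setminus P}\Z_p$, which is manifestly open in $H_P$ because $\Z_p$ is open in $\Q_p$.

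Assuming for contradiction that $S$ is infinite, I would use the fact that each $\xi_n$ has only finitely many coordinates of absolute value greater than $1$: writing $S\setminus P_0=\bigcup_n(P_n\setminus P_0)$ with $P_n:=\{x\setsep |\xi_n(x)|_p>1\}$ finite forces infinitely many $n$ to contribute, so one can extract an unbounded sequence $(n_k)$ and corresponding $x_k\in S\setminus P_0$ with $|\xi_{n_k}(x_k)|_p>1$. Since $|\xi(x_k)|_p\leq 1$, the ultrametric inequality gives $|\xi_{n_k}(x_k)-\xi(x_k)|_p>1$, so $\xi_{n_k}\notin U$ for infinitely many $k$, contradicting convergence. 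The main obstacle is identifying the right neighborhood $U$; once one recognizes that $\prod_{x\in X}\Z_p$ is itself $\tau$-open, all remaining steps are elementary $p$-adic bookkeeping.
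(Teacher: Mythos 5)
Your proof is correct. The paper omits the proof of this lemma (it is merely declared ``easy to prove''), and your argument --- continuity of the inclusions $\prod_{i\in P}\Q_p\times\prod_{j\in X\setminus P}\Z_p\hookrightarrow\Q_p(X)$ and of the coordinate evaluations for the two routine implications, together with the key observation that $\xi+\Z_p(X)$ is a $\tau$-open neighbourhood of $\xi$ which forces the finiteness of the exceptional set --- is precisely the standard argument the authors have in mind, consistent with their remark that $\Z_p(X)=\prod_{i\in X}\Z_p$ is a compact \emph{open} subgroup.
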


 Equipped with the natural coordinate-wise addition and the topology $\tau$, the set $\Q_p(X)$ becomes a locally compact and $\sigma$-compact Hausdorff topological abelian group where the subset 
\begin{align*}
\Z_p(X):=\prod_{i\in X}\Z_p 
\end{align*}
is (according to Tychonoff's theorem) a compact open subgroup. The group $\Q_p(X)$ additionally carries a natural structure of a $\Z_p$-module, but it is not a $\Q_p$-vector space if $X$ is infinite.

The topological groups $\Q_p(X)$ have already been considered in \cite{rajagopalansoundararajan} where the authors show that all self-dual (in Pontryagin's sense) metrizable locally compact torsion-free abelian groups are either of this form or of the form $\R^n$, of the form $D\oplus \widehat D$ where $D$ is a countable torsion-free divisible discrete group, or a (local) direct sum of groups of these types.
 
Also the following lemma is easy to verify: 

\begin{lemma}\label{Polish group}
The abelian group $\Q_p(X)$ is a Polish group. 
\end{lemma}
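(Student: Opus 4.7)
The plan is to verify the two defining properties of a Polish group: the underlying topology is separable, and it admits a compatible complete metric. I would establish second countability first, and then bootstrap from there using the fact, already noted in the excerpt, that $\Q_p(X)$ is locally compact Hausdorff.

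For second countability, I would write $\Q_p(X) = \bigcup_P U_P$ where $P$ ranges over the finite subsets of the countable set $X$ and $U_P := \prod_{i \in P} \Q_p \times \prod_{j \in X \setminus P} \Z_p$. There are only countably many such $P$. Each $U_P$ is open in $\tau$ by the very definition of the topology, and carries its native product topology. Both $\Q_p$ and $\Z_p$ are separable metric spaces, hence second countable, and a countable product of second countable spaces is second countable, so each $U_P$ is second countable. A countable union of open second countable subspaces is second countable, giving the claim for $\Q_p(X)$.

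Since $\Q_p(X)$ is then Hausdorff, locally compact, and second countable, Urysohn's metrization theorem supplies a compatible metric. To upgrade this to a \emph{complete} compatible metric I would either invoke the classical fact that a locally compact Hausdorff second countable space is Polish, or, to stay close to the group structure, construct a left-invariant metric by the Birkhoff--Kakutani procedure (applicable since the countable basis $\{p^n\Z_p(X)\}_{n\ge 0}$ of neighbourhoods of $0$ consists of open subgroups) and invoke that every locally compact Hausdorff group is complete in its natural (two-sided) uniformity.

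I do not anticipate any substantive obstacle. The only point requiring a bit of care is checking that $\tau$ induces the usual product topology on each $U_P$ so that second countability transfers back; this is essentially the characterization of $\tau$ given in the excerpt, namely that the inclusions $U_P \hookrightarrow \Q_p(X)$ are continuous and $\tau$ is the finest such topology, combined with the fact that the $U_P$ are $\tau$-open.
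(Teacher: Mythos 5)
The paper offers no proof of this lemma (it is dismissed as ``easy to verify''), so there is nothing to compare against; your main line of argument is correct and is presumably the intended one. Writing $\Q_p(X)$ as the countable union of the $\tau$-open subspaces $U_P$, each second countable in its product topology, does give second countability, and combined with the local compactness and Hausdorffness asserted in the text, the classical fact that a second countable locally compact Hausdorff space is Polish finishes the job. The one point you rightly flag as needing care --- that $\tau$ restricted to $U_P$ is the product topology and that $U_P$ is $\tau$-open --- reduces to checking that $U_P\cap U_Q=U_{P\cap Q}$ is product-open in $U_Q$, which holds because only the finitely many coordinates in $Q\setminus P$ get restricted.

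However, your parenthetical alternative contains a genuine error: $\{p^n\Z_p(X)\}_{n\ge 0}$ is \emph{not} a neighbourhood basis of $0$ consisting of open subgroups. For $n\ge 1$ the set $p^n\Z_p(X)=\prod_{i\in X}p^n\Z_p$ constrains \emph{all} coordinates, so its intersection with $\Z_p(X)=\prod_{i\in X}\Z_p$ has empty interior in the product topology when $X$ is infinite; it is therefore not open and not even a neighbourhood of $0$. A correct countable basis of open subgroups at $0$ is given by the sets $\prod_{i\in F}p^n\Z_p\times\prod_{j\in X\setminus F}\Z_p$ with $F\subseteq X$ finite and $n\in\N$, which restrict only finitely many coordinates; with this replacement the Birkhoff--Kakutani route also goes through. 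Since your primary route does not use this claim, the proof as a whole stands, but the side remark should be corrected.
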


\begin{definition}\label{bounded linear operators}
The set of all $\Z_p$-linear $\tau$-continuous operators on $\Q_p(X)$ is denoted by $\B(\Q_p(X))$. 
\end{definition}

Note that a $\tau$-continuous group homomorphism $A\colon\Q_p(X)\rightarrow\Q_p(X)$ is already in $\B(\Q_p(X))$.  The set $\B(\Q_p(X))$ forms a $\Z_p$-module with the canonical operations.

The following two lemmas are special cases of well-known versions of the open mapping and closed graph theorems for certain topological groups (cf. \cite{hofmannmorris}, Theorem 1.5 and \cite{kelley}, p. 213). For these useful lemmas, the assumption on $X$ to be countable becomes relevant. 

\begin{lemma}\label{closed graph theorem}
Let $A\colon\Q_p(X)\rightarrow\Q_p(X)$ be a group homomorphism. The following statements are equivalent: 
\begin{itemize}
\item[\emph{(a)}] $A$ is $\tau$-continuous, 
\item[\emph{(b)}] the graph 
$\mathcal{G}(A):=\{(\xi, A\xi)\in\Q_p(X)\times\Q_p(X);\xi\in\Q_p(X)\} $
of the map $A$ is closed in $\Q_p(X)\times\Q_p(X)$,
\item[\emph{(c)}] for every sequence $(\xi_n)_{n\in\N}$ with $\tau\text{-}\lim_{n\rightarrow\infty}\xi_n=0$ and $\tau\text{-}\lim_{n\rightarrow\infty}A\xi_n=\eta$, we have $\eta=0$. 
\end{itemize}
\end{lemma}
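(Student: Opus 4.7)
The plan is to prove the cyclic chain of implications (a)$\Rightarrow$(b)$\Rightarrow$(c)$\Rightarrow$(a). The first two implications are routine, and the third is where the machinery cited above is needed.

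For (a)$\Rightarrow$(b), I would argue as follows: suppose $A$ is $\tau$-continuous. Since $\Q_p(X)$ is Polish by Lemma \ref{Polish group}, the product $\Q_p(X)\times\Q_p(X)$ is metrizable, so closedness of $\mathcal{G}(A)$ is equivalent to sequential closedness. If $(\xi_n,A\xi_n)\to(\xi,\eta)$, then $\xi_n\to\xi$ in $\tau$, hence $A\xi_n\to A\xi$ by continuity, and Hausdorffness of $\Q_p(X)$ gives $\eta=A\xi$, so $(\xi,\eta)\in\mathcal{G}(A)$.

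For (b)$\Rightarrow$(c), assume $\mathcal{G}(A)$ is closed and take a sequence with $\xi_n\to 0$ and $A\xi_n\to\eta$. Then $(\xi_n,A\xi_n)\to(0,\eta)$ lies in $\mathcal{G}(A)$, which forces $\eta=A\cdot 0=0$. Conversely, for (c)$\Rightarrow$(b), if $(\xi_n,A\xi_n)\to(\xi,\eta)$ in $\Q_p(X)\times\Q_p(X)$, the sequence $\xi_n-\xi$ converges to $0$ in $\tau$ and $A(\xi_n-\xi)=A\xi_n-A\xi$ converges to $\eta-A\xi$; applying (c) yields $\eta-A\xi=0$, so $\mathcal{G}(A)$ is sequentially and hence topologically closed. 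Thus (b) and (c) are equivalent essentially by translation invariance, for which the $\Z_p$-linearity of $A$ (in particular, $A$ being a group homomorphism) is exactly what is needed.

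The main obstacle, and the only step that is not an exercise in point-set topology, is (b)$\Rightarrow$(a). Here one invokes the classical closed graph theorem for topological groups, as formulated in \cite{hofmannmorris}, Theorem 1.5 and in \cite{kelley}, p. 213: any group homomorphism between sufficiently nice topological groups whose graph is closed is automatically continuous. The hypotheses of these theorems require the source and target to be Polish (or second countable locally compact Hausdorff), and this is exactly why the countability assumption on $X$ enters. Both hypotheses are satisfied for $\Q_p(X)$: local compactness and $\sigma$-compactness were noted just before Lemma \ref{Polish group}, and Lemma \ref{Polish group} asserts that $\Q_p(X)$ is Polish. Applying the cited theorem to $A\colon\Q_p(X)\to\Q_p(X)$ with closed graph then yields $\tau$-continuity, closing the loop and finishing the proof.
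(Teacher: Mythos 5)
Your proposal is correct and takes essentially the same route as the paper, which does not write out a proof but simply points to the open mapping/closed graph theorems for Polish (respectively second countable locally compact) topological groups in \cite{hofmannmorris}, Theorem 1.5 and \cite{kelley}, p.~213, exactly the results you invoke for (b)$\Rightarrow$(a); your elementary arguments for (a)$\Rightarrow$(b) and (b)$\Leftrightarrow$(c) fill in the routine details the paper leaves implicit. You also correctly identify that the countability of $X$ enters precisely to make $\Q_p(X)$ Polish, which is the point the paper emphasizes in the sentence preceding the lemma.
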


Recall that a map $A\in \B(\Q_p(X))$ is called \emph{open} if it maps open sets onto open sets. As a consequence of Lemma \ref{closed graph theorem}, one easily sees that any surjective $A\in \B(\Q_p(X))$ is open. 

\subsection{The pairing on $\Q_p(X)$ and duality aspects}\label{pairing}

 We want to introduce a natural pairing on our space $\Q_p(X)$ that can be compared to a scalar product on a usual Hilbert space: Define 
$$\langle\ , \ \rangle\colon\Q_p(X)\times\Q_p(X)\rightarrow S^1$$
by 
\begin{align*}
\langle\xi, \eta\rangle:=\iota\left(\sum_{i\in X}(\xi(i)\eta(i)+\Z_p)\right) 
\end{align*}
where $\iota\colon\Q_p/\Z_p=\Z[1/p]/\Z\hookrightarrow \mathbb R/\mathbb Z\cong S^1$ is the canonical map (here, the identification $\R/\Z\cong S^1$ is given as usual by $t\mapsto e^{2\pi it}, t\in\R/\Z,$ and the identification $\Z[1/p]/\Z$ with $\Q_p/\Z_p$ is given by the composited map $\Z[1/p]\hookrightarrow\Q_p\twoheadrightarrow\Q_p/\Z_p$ that factors through $\Z[1/p]/\Z$).

The pairing is symmetric and jointly continuous because it is the composition of two continuous maps (where $\Q_p/\Z_p$ is equipped with the discrete topology). Furthermore, it induces a $\Z_p$-linear identification of $\Q_p(X)$ with its Pontryagin dual (cf. \cite{rajagopalansoundararajan} or \cite{neukirchschmidtwingberg}, Prop. I.1.1.13). As a topological group, $\Q_p(X)$ is isomorphic to its Pontryagin dual. This is an analogy to Riesz' theorem on the self-duality for Hilbert spaces.\footnote{Notice, however, that $\tau$ is \emph{not} equal to the weak topology with respect to this pairing, i.\,e. the initial topology with respect to the maps of the form $\Q_p(X)\rightarrow S^1, \xi\mapsto\langle\xi, \eta\rangle$ (where $\eta\in\Q_p(X)$), cf. the remark following Thm. 1.8.2 in W. Rudin's book ``Fourier analysis on groups'', p. 30. }\\
\begin{remark}
In the definition of the pairing $\langle\ , \ \rangle$, it would have been possible to take other embeddings $j$ of $\Q_p/\Z_p\cong\Z[1/p]/\Z$ into $S^1$ instead of $\iota$. Each such embedding differs from $\iota$ by a unique $\alpha_j\in\Z_p^\times$, the unit group of $\Z_p$, in the way that $j=\iota\circ M_{\alpha_j}$, where $M_{\alpha_j}\colon\Q_p/\Z_p\rightarrow\Q_p/\Z_p$ denotes the multiplication by $\alpha_j$.
\end{remark}
Let us pursue the analogy between $\Q_p(X)$ and Hilbert spaces: 

\begin{definition}\label{annihilator and orthogonality}
Let $K$ be a subset of $\Q_p(X)$. Define 
\begin{align*}
K^\perp:=\{\xi\in\Q_p(X);\forall\eta\in K\colon\langle\xi, \eta\rangle=0\}. 
\end{align*}
For subsets $K, L\subseteq\Q_p(X)$, we write $K\perp L$ if $\langle\xi, \eta\rangle=0$ for all $\xi\in K$ and $\eta\in L$. 
\end{definition}

 The set $K^\perp$ is a closed sub-$\Z_p$-module of $\Q_p(X)$. It may happen that $K\cap K^\perp\neq\{0\}$. For example, we have $\Z_p(X)^\perp=\Z_p(X)$. 

\begin{lemma}\label{lemma on Pontryagin duality}
Let $H\subseteq\Q_p(X)$ be a closed subgroup. The Pontryagin dual $\widehat H$ of $H$ is topologically isomorphic to $\Q_p(X)/H^\perp$. 
\end{lemma}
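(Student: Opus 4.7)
The plan is a direct application of the standard Pontryagin duality for short exact sequences of locally compact Hausdorff abelian groups, combined with the self-duality of $\Q_p(X)$ already established in Section \ref{pairing}.

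I would start from the short exact sequence of topological groups
\begin{equation*}
0 \longrightarrow H \stackrel{j}{\longrightarrow} \Q_p(X) \stackrel{q}{\longrightarrow} \Q_p(X)/H \longrightarrow 0,
\end{equation*}
where $j$ is the inclusion of the closed subgroup and $q$ is the quotient map. Since $\Q_p(X)$ is a locally compact Hausdorff abelian group, both $H$ and $\Q_p(X)/H$ inherit the same property. The classical Pontryagin duality theorem for such sequences (cf.\ W.\ Rudin, \emph{Fourier analysis on groups}, Thm.~2.1.2, or Hewitt--Ross Vol.~I, Thm.~24.11) yields a dual exact sequence
\begin{equation*}
0 \longrightarrow \widehat{\Q_p(X)/H} \stackrel{\widehat q}{\longrightarrow} \widehat{\Q_p(X)} \stackrel{\widehat j}{\longrightarrow} \widehat H \longrightarrow 0
\end{equation*}
of topological groups, in which $\widehat j$ is restriction of characters and is a topological quotient map, while $\widehat q$ identifies $\widehat{\Q_p(X)/H}$ with the subgroup of characters of $\Q_p(X)$ that vanish on $H$.

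Next I would transport this picture through the self-duality isomorphism $\Phi\colon\Q_p(X)\to\widehat{\Q_p(X)}$, $\xi\mapsto\langle\xi,\,\cdot\,\rangle$. By the very definition of $H^\perp$, a character $\Phi(\xi)$ vanishes on $H$ if and only if $\langle\xi,\eta\rangle=0$ for every $\eta\in H$, i.e.\ if and only if $\xi\in H^\perp$. Hence $\Phi$ restricts to a topological isomorphism $H^\perp\cong\ker\widehat j=\mathrm{im}\,\widehat q$ and therefore descends to a topological isomorphism
\begin{equation*}
\overline\Phi\colon \Q_p(X)/H^\perp \stackrel{\cong}{\longrightarrow} \widehat{\Q_p(X)}/\ker\widehat j.
\end{equation*}
Composing $\overline\Phi$ with the topological isomorphism $\widehat{\Q_p(X)}/\ker\widehat j\cong\widehat H$ induced by $\widehat j$ produces the desired identification $\Q_p(X)/H^\perp\cong\widehat H$.

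The only nontrivial ingredient is the classical input that $\widehat j$ is a topological quotient map, not merely a continuous surjection: surjectivity is the character extension theorem for closed subgroups of locally compact abelian groups, while the openness requires an application of the open mapping theorem in the LCA setting. This is where $\sigma$-compactness and metrizability of $\Q_p(X)$ (recorded in Lemma~\ref{Polish group}) enter and ensure that the compact-open topology on $\widehat H$ really coincides with the quotient topology on $\widehat{\Q_p(X)}/H^\perp$. Beyond citing this standard fact, the remainder of the argument is a routine bookkeeping of the identifications.
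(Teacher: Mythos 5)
Your proposal is correct and follows essentially the same route as the paper: extend characters from $H$ via the classical extension theorem, represent them through the self-duality of $\Q_p(X)$, observe that the kernel of restriction is exactly $H^\perp$, and upgrade the resulting continuous bijection to a topological isomorphism. The only cosmetic difference is that you obtain the openness from the classical LCA duality theorem for quotients (where, incidentally, no $\sigma$-compactness is needed), while the paper gets it from the open mapping theorem for Polish groups.
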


\begin{proof} According to Corollary 3.6.2 in \cite{deitmarechterhoff}, each character $\varphi$ on $H$ extends to $\Q_p(X)$. Because of the self-duality of $\Q_p(X)$, it can be represented by some vector $\xi\in\Q_p(X)$, i.\,e. 
\begin{align*}
\forall\eta\in H\colon \varphi(\eta)=\langle\eta, \xi\rangle 
\end{align*}
where $\xi$ is determined uniquely up to elements in $H^\perp$. We obtain a bijective group homomorphism $\Phi$ from $\widehat H$ to $\Q_p(X)/H^\perp$. Note that both groups are Polish: the second as a quotient of the Polish group $\Q_p(X)$, the first as a quotient of the Polish group $\widehat{\Q_p(X)}\cong\Q_p(X)$ (as $H$ is a closed subgroup of $\Q_p(X)$, use proposition 3.6.1 in \cite{deitmarechterhoff}). As $\Phi$ is a bijective continuous group homomorphism between Polish groups, the map $\Phi^{-1}$ must be an isomorphism of topological groups (use again Theorem 1.5 in \cite{hofmannmorris}). 
\end{proof}

\begin{lemma}\label{properties of the complement}
Let $K, L\subseteq\Q_p(X)$ be closed sub-$\Z_p$-modules. Then, the following properties hold: 
\begin{itemize}
 \item[\emph{(a)}] $K=K^{\perp\perp}$, 
 \item[\emph{(b)}] $K\subseteq L\Rightarrow L^\perp\subseteq K^\perp$, 
 \item[\emph{(c)}] $(K+L)^\perp=K^\perp\cap L^\perp$, 
 \item[\emph{(d)}] $(K\cap L)^\perp=\operatorname{cl}(K^\perp+L^\perp)$ where the closure is taken in the $\tau$-topology. 
\end{itemize}
\end{lemma}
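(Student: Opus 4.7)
The plan is to handle (b) and (c) directly from the definitions, to derive (a) from the self-duality of $\Q_p(X)$ combined with the separation of points of an LCA group by its continuous characters, and finally to reduce (d) to (a) and (c) by a formal annihilator calculation. The heart of the argument is part (a); everything else follows by standard bookkeeping.

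Statement (b) is immediate: if $K\subseteq L$ and $\xi$ pairs trivially with every element of $L$, it certainly does so with every element of $K$. For (c), each map $\langle\,\cdot\,,\eta\rangle$ is a continuous character of $\Q_p(X)$ (this is precisely how self-duality is realised in Section \ref{pairing}), so the pairing is biadditive; the inclusion $(K+L)^\perp\subseteq K^\perp\cap L^\perp$ comes from $K,L\subseteq K+L$ together with (b), and the reverse one from $\langle\xi,\eta_1+\eta_2\rangle=\langle\xi,\eta_1\rangle+\langle\xi,\eta_2\rangle$. For (a), the inclusion $K\subseteq K^{\perp\perp}$ is clear from symmetry. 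For the reverse, let $\xi_0\in\Q_p(X)\setminus K$. Since $K$ is closed, $\Q_p(X)/K$ is a Hausdorff LCA (in fact Polish) group and $\xi_0+K$ is a non-zero coset. Continuous characters separate points of any LCA group (cf. \cite{deitmarechterhoff}), so there exists a continuous character $\chi$ of $\Q_p(X)/K$ with $\chi(\xi_0+K)\neq 0$. Pulling $\chi$ back along the canonical projection $\Q_p(X)\twoheadrightarrow\Q_p(X)/K$ produces a character of $\Q_p(X)$ that annihilates $K$ but not $\xi_0$. By self-duality, this character is of the form $\langle\,\cdot\,,\eta\rangle$ for a unique $\eta\in\Q_p(X)$, which then satisfies $\eta\in K^\perp$ and $\langle\xi_0,\eta\rangle\neq 0$. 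Hence $\xi_0\notin K^{\perp\perp}$.

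For (d), I first observe that for any (not necessarily closed) sub-$\Z_p$-module $M\subseteq\Q_p(X)$, continuity of each $\langle\,\cdot\,,\eta\rangle$ gives $M^\perp=\operatorname{cl}(M)^\perp$, and applying (a) to the closed module $\operatorname{cl}(M)$ yields $M^{\perp\perp}=\operatorname{cl}(M)^{\perp\perp}=\operatorname{cl}(M)$. Applying this observation to $M=K^\perp+L^\perp$ and combining with (c) and (a), I obtain
\[
\operatorname{cl}(K^\perp+L^\perp)=(K^\perp+L^\perp)^{\perp\perp}=(K^{\perp\perp}\cap L^{\perp\perp})^\perp=(K\cap L)^\perp,
\]
which is (d). The only input beyond purely formal manipulations is the character separation argument used in the proof of (a), which is where I expect the main obstacle to lie; once (a) is in place, parts (b)--(d) are essentially automatic.
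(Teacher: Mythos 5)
Your proposal is correct, and its overall architecture matches the paper's: (b) and (c) are formal, (a) carries the real content via Pontryagin duality, and (d) is deduced from (a) and (c). The one genuine difference is in how you prove (a). The paper dualizes the exact sequence $0\rightarrow K^\perp\rightarrow\Q_p(X)\rightarrow\widehat K\rightarrow 0$ coming from Lemma \ref{lemma on Pontryagin duality}, substitutes $K^\perp$ for $K$, and compares the kernels of the two resulting surjections onto $\widehat{K^\perp}$; you instead separate a point $\xi_0\notin K$ from $K$ by a character of the Hausdorff quotient $\Q_p(X)/K$ and pull it back through the self-duality to land in $K^\perp$. These are two faces of the same duality input from \cite{deitmarechterhoff}, but your version is more self-contained (it does not route through Lemma \ref{lemma on Pontryagin duality}) and makes the ``why'' of $K^{\perp\perp}\subseteq K$ more transparent, at the cost of having to justify that characters of the quotient separate points. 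For (d) the paper only says it ``follows from (c) using (a)''; you supply the missing bookkeeping, namely that $M^\perp=\operatorname{cl}(M)^\perp$ and hence $M^{\perp\perp}=\operatorname{cl}(M)$ for an arbitrary sub-$\Z_p$-module $M$, applied to $M=K^\perp+L^\perp$ (which is legitimate since $K^\perp$ and $L^\perp$ are themselves closed sub-$\Z_p$-modules, so (c) applies to them). This is exactly the intended argument, just written out.
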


\begin{proof} First, we prove the property (a). Lemma \ref{lemma on Pontryagin duality} yields the following exact sequence: 
\begin{align*}
0\rightarrow K^\perp\rightarrow\Q_p(X)\rightarrow\widehat K\rightarrow 0. 
\end{align*}
Now, Pontryagin duality shows (cf. \cite{deitmarechterhoff}, Corollary 3.6.2) that the dual sequence 
\begin{align*}
0\rightarrow K\rightarrow\Q_p(X)\rightarrow\widehat{K^\perp}\rightarrow 0 
\end{align*}
is also exact. Replacing $K$ by $K^\perp$ in the first sequence, we obtain the exact sequence 
\begin{align*}
0\rightarrow K^{\perp\perp}\rightarrow\Q_p(X)\rightarrow\widehat{K^\perp}\rightarrow 0. 
\end{align*}
The maps $\Q_p(X)\rightarrow\widehat{K^\perp}$ in the second and the third sequence coincide, i.\,e. their kernels $K$ and $K^{\perp\perp}$ coincide as well. The statements (b) and (c) are obvious. The statement (d) follows from statement (c) using statement (a). 
\end{proof}

\subsection{The adjoint of an operator}\label{adjoint}

 We obtain a further analogy of $\Q_p(X)$ and ordinary Hilbert spaces: 

\begin{lemma}\label{adjoint operator}
For every $A\in \B(\Q_p(X))$, there is a unique operator $A^*\in \B(\Q_p(X))$ satisfying 
$\forall\xi, \eta\in\Q_p(X)\colon \langle A\xi, \eta\rangle=\langle\xi, M\eta\rangle.$ We will call it the \emph{adjoint operator} of $A$. For $A, B\in \B(\Q_p(X))$, $\lambda \in \mathbb Z_p$ we have 
\begin{align*}
A^{**}=A, \ \ \ (A+\lambda B)^*=A^*+\lambda B^*, \ \ \ (AB)^*=B^*A^*. 
\end{align*}
\end{lemma}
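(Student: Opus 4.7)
The plan is to define $A^*$ pointwise via the self-duality isomorphism of Section \ref{pairing}, then separately verify $\Z_p$-linearity, $\tau$-continuity (which will be the main obstacle), and finally the three algebraic identities.

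For fixed $\eta \in \Q_p(X)$, the map $\varphi_\eta \colon \xi \mapsto \langle A\xi, \eta\rangle$ is a composition of the continuous operator $A$ with the jointly continuous pairing, hence a continuous character of $\Q_p(X)$. By the self-duality result of Section \ref{pairing}, there is a unique element $A^*\eta \in \Q_p(X)$ such that $\varphi_\eta(\xi) = \langle \xi, A^*\eta\rangle$ for every $\xi \in \Q_p(X)$. This defines a candidate map $A^*\colon \Q_p(X) \to \Q_p(X)$; its uniqueness as an operator is immediate from non-degeneracy of the pairing. For $\Z_p$-linearity I would note that both $A^*(\eta_1 + \lambda\eta_2)$ and $A^*\eta_1 + \lambda A^*\eta_2$ represent the same character $\xi \mapsto \langle A\xi, \eta_1 + \lambda\eta_2\rangle$, so uniqueness in the self-duality representation forces them to coincide.

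The main obstacle is $\tau$-continuity, which I would establish via Lemma \ref{closed graph theorem}(c). Assume $\eta_n \to 0$ and $A^*\eta_n \to \zeta$ in $\tau$. Then for every $\xi \in \Q_p(X)$, joint continuity of the pairing gives
\[
\langle \xi, \zeta\rangle = \lim_{n\to\infty}\langle \xi, A^*\eta_n\rangle = \lim_{n\to\infty}\langle A\xi, \eta_n\rangle = 0,
\]
so $\zeta$ lies in $\Q_p(X)^\perp = \{0\}$ by non-degeneracy (Lemma \ref{properties of the complement}(a) applied to $K = \{0\}$). Hence $\zeta = 0$, Lemma \ref{closed graph theorem} applies, and $A^* \in \B(\Q_p(X))$.

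The three algebraic identities then drop out of uniqueness combined with symmetry of the pairing. For example, $\langle \xi, A^{**}\eta\rangle = \langle A^*\xi, \eta\rangle = \langle \eta, A^*\xi\rangle = \langle A\eta, \xi\rangle = \langle \xi, A\eta\rangle$ together with non-degeneracy yields $A^{**} = A$; analogous one-line calculations (using the identity $\langle \lambda\xi, \eta\rangle = \langle \xi, \lambda\eta\rangle$ for $\lambda \in \Z_p$, which is built into the pairing) yield $(A + \lambda B)^* = A^* + \lambda B^*$ and $(AB)^* = B^*A^*$.
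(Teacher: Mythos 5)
Your proposal is correct and follows essentially the same route as the paper: existence and uniqueness of $A^*\eta$ via the self-duality representation of the character $\xi\mapsto\langle A\xi,\eta\rangle$, $\tau$-continuity via criterion (c) of Lemma \ref{closed graph theorem}, and the algebraic identities from uniqueness and symmetry of the pairing. The paper merely sketches these steps; you have filled in the details in the intended way.
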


\begin{proof} The uniqueness and existence of a group homomorphism $A^*$ with the above property can be proved as in the usual Hilbert space case, %, cf. our proof of the more general Theorem \ref{sesquilinear form and representation}. \\
and to prove the continuity of the homomorphism $M$, one then applies the third characterization of $\tau$-continuity in Lemma \ref{closed graph theorem} (or one simply uses the fact that Pontryagin duality is a functor together with the self-duality of $\Q_p(X)$). 
 The formulae for the adjoint operator are clear. 
\end{proof}

\begin{lemma}\label{adjoint operator and its image}
For every $A\in \B(\Q_p(X))$, we have $\operatorname{ker}(A)=\operatorname{im}(A^*)^\perp$. 
\end{lemma}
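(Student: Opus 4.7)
The plan is to mimic the standard Hilbert-space proof of the identity $\ker(A) = \operatorname{im}(A^*)^\perp$, which reduces to a one-line equivalence once non-degeneracy of the pairing $\langle\ , \ \rangle$ is established. Concretely, I would chain the equivalences
\begin{align*}
\xi \in \ker(A)
\ \Longleftrightarrow\ A\xi = 0
\ \Longleftrightarrow\ \forall\eta\in\Q_p(X)\colon \langle A\xi, \eta\rangle = 0
\ \Longleftrightarrow\ \forall\eta\in\Q_p(X)\colon \langle \xi, A^*\eta\rangle = 0
\ \Longleftrightarrow\ \xi \in \operatorname{im}(A^*)^\perp,
\end{align*}
where the third $\Longleftrightarrow$ is just the defining property of the adjoint from Lemma \ref{adjoint operator}, and the last $\Longleftrightarrow$ is the definition of $\operatorname{im}(A^*)^\perp$.

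The only step that requires justification is the second equivalence, i.e.\ the non-degeneracy statement: if $\zeta\in\Q_p(X)$ satisfies $\langle \zeta,\eta\rangle = 0$ for every $\eta\in\Q_p(X)$, then $\zeta = 0$. This is where the self-duality established in Section \ref{pairing} is essential. The pairing induces an isomorphism between $\Q_p(X)$ and its Pontryagin dual $\widehat{\Q_p(X)}$ via $\eta\mapsto\langle \cdot,\eta\rangle$. Thus the vanishing condition $\langle \zeta,\eta\rangle = 0$ for all $\eta$ says precisely that $\zeta$ lies in the kernel of every character of $\Q_p(X)$. Since characters separate points on any locally compact abelian group (this is the standard consequence of Pontryagin duality, see e.g.\ \cite{deitmarechterhoff}), we conclude $\zeta = 0$. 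Applied to $\zeta := A\xi$, this yields the desired equivalence.

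I do not foresee any real obstacle here: the argument is essentially formal once one has in hand both the definition of the adjoint (Lemma \ref{adjoint operator}) and the self-duality of $\Q_p(X)$ (which was already invoked in Section \ref{pairing}). The only potential pitfall is a temptation to mimic the Archimedean proof by setting $\eta := A\xi$ and recovering $A\xi$ from $\langle A\xi,A\xi\rangle$; this fails, since the pairing takes values in $S^1$ and is not positive definite, so one really has to separate $A\xi$ from $0$ by testing against all $\eta$ and invoking point separation of characters. The rest is book-keeping.
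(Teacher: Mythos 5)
Your proof is correct and follows essentially the same route as the paper: both reduce the statement to the adjoint identity $\langle A\xi,\eta\rangle=\langle\xi,A^*\eta\rangle$ plus non-degeneracy of the pairing, which you justify (correctly) via self-duality and point separation of characters, while the paper simply asserts it. Your remark that one cannot test against $\eta=A\xi$ alone is a sensible caveat, but the substance of the argument is identical.
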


\begin{proof} The direction $\operatorname{ker}(A)\subseteq\operatorname{im}(A^*)^\perp$ is clear. Suppose therefore $\eta\in\operatorname{im}(A^*)^\perp$. For all $\xi\in\Q_p(X)$, we see that 
\begin{align*}
\langle A\eta, \xi\rangle=\langle\eta, A^*\xi\rangle=0. 
\end{align*}
Hence, since our natural pairing is non-degenerate, we obtain that $A\eta=0$ or $\eta\in\operatorname{ker}(A)$. 
\end{proof}

For reasons of completeness, we finally want to state a more general version of Lemma \ref{adjoint operator}: 

\begin{theorem}\label{sesquilinear form and representation}
Let $\sigma\colon\Q_p(X)\times\Q_p(X)\rightarrow S^1$ be a biadditive form that is separately continuous. Then, there exists a unique $A\in\B(\Q_p(X))$ such that 
\begin{align*}
\forall\xi, \eta\in\Q_p(X)\colon\langle A\xi, \eta\rangle=\sigma(\xi, \eta) 
\end{align*}
holds\footnote{Note that the seemingly non-trivial part lies in showing that already \emph{separate} continuity of $\sigma$ is sufficient. }. 
\end{theorem}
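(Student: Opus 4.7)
The plan is to construct $A$ pointwise using the self-duality of $\Q_p(X)$ established in Section \ref{pairing}, and then to upgrade $A$ to a member of $\B(\Q_p(X))$ via the third characterization of $\tau$-continuity in Lemma \ref{closed graph theorem}.

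Fix $\xi\in\Q_p(X)$. By separate continuity and biadditivity of $\sigma$, the map $\eta\mapsto\sigma(\xi,\eta)$ is a continuous character on $\Q_p(X)$; by self-duality this character is uniquely represented by some vector $A\xi\in\Q_p(X)$, i.\,e.\ $\sigma(\xi,\eta)=\langle A\xi,\eta\rangle$ for every $\eta\in\Q_p(X)$. Biadditivity of both $\sigma$ and $\langle\ ,\ \rangle$ together with the uniqueness of the representing vector yield that $\xi\mapsto A\xi$ is a group homomorphism, and uniqueness of the whole operator $A$ is immediate from the non-degeneracy of the pairing.

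For the $\tau$-continuity of $A$ I would invoke condition (c) of Lemma \ref{closed graph theorem}. Suppose $\tau\text{-}\lim_{n\rightarrow\infty}\xi_n=0$ and $\tau\text{-}\lim_{n\rightarrow\infty}A\xi_n=\eta$. For an arbitrary $\zeta\in\Q_p(X)$,
\[
\langle\eta,\zeta\rangle=\lim_{n\rightarrow\infty}\langle A\xi_n,\zeta\rangle=\lim_{n\rightarrow\infty}\sigma(\xi_n,\zeta)=\sigma(0,\zeta)=0,
\]
where the first equality uses joint continuity of the pairing and the third uses separate continuity of $\sigma$ in its first slot with $\zeta$ held fixed. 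Non-degeneracy of $\langle\ ,\ \rangle$ then forces $\eta=0$, so $A$ is $\tau$-continuous; since any $\tau$-continuous group homomorphism on $\Q_p(X)$ is automatically $\Z_p$-linear (by continuity of the $\Z_p$-action and density of $\Z$ in $\Z_p$), we have $A\in\B(\Q_p(X))$.

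The main obstacle is precisely the point flagged in the footnote: the hypothesis provides only separate, not joint, continuity of $\sigma$. Under joint continuity the map $\xi\mapsto\sigma(\xi,\cdot)\in\widehat{\Q_p(X)}\cong\Q_p(X)$ would be continuous by functoriality of Pontryagin duality and there would be nothing to do. The closed graph approach circumvents the issue because condition (c) of Lemma \ref{closed graph theorem} tests $\sigma$ only against a convergent sequence in its first argument with a single fixed vector in its second argument, which is exactly what separate continuity supplies.
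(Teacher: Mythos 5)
Your proof is correct. The paper defers the proof of this theorem to the thesis, but your argument --- defining $A$ pointwise via self-duality and then verifying the sequential criterion (c) of Lemma \ref{closed graph theorem}, which only ever tests $\sigma$ against a fixed second argument and is therefore exactly where separate continuity suffices --- is the same strategy the paper itself sketches for the special case of adjoints in Lemma \ref{adjoint operator}, so this is essentially the intended approach.
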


The proof can be found in \cite{claussnitzer}, Theorem 4.4. 

\subsection{The norm topology on $\Q_p(X)$ and $\B(\Q_p(X))$}\label{norm topology}

 For an element $\xi\in\Q_p(X)$, we define $\|\xi\|:=\max_{i\in X}|\xi(i)|_p$. It is clear that we have defined an ultra-norm on the $\Z_p$-module $\Q_p(X)$ in this way: $\|\xi+\eta\|\leq \max\{ \|\xi\|, \|\eta\| \}$ for all $\xi, \eta\in\Q_p(X)$. Note that all norm-convergent sequences also converge with respect to $\tau$, but not the other way around. The norm topology is therefore stronger than the $\tau$-topology (strictly stronger if $X$ is infinite). The following lemma is easy to verify: 

\begin{lemma}\label{compactness lemma}
A subset $K\subseteq\Q_p(X)$ is $\tau$-compact if and only if 
it is norm-bounded, $\tau$-closed and 
 there is a finite subset $S\subseteq X$ such that 
$$K\subseteq\prod_{x\in S}\Q_p\times\prod_{x\in X\setminus S}\Z_p. $$
\end{lemma}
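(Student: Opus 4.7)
The plan is to prove both implications by exhausting $\Q_p(X)$ with the subgroups $U_F := \prod_{i\in F}\Q_p\times\prod_{i\notin F}\Z_p$ indexed by finite $F\subseteq X$, and to combine Tychonoff's theorem with the local compactness of $\Q_p$ (whose closed balls are compact).

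A preliminary step I would dispatch first is to check that each $U_F$ is $\tau$-open in $\Q_p(X)$ and that the $\tau$-subspace topology on $U_F$ coincides with the product topology. For openness, by the definition of $\tau$ it suffices to verify that $U_F\cap U_P$ is open in $U_P$ for every finite $P$; a direct calculation gives $U_F\cap U_P = U_{F\cap P}$, which inside $U_P$ is cut out by the finitely many open conditions $\xi(i)\in\Z_p$ for $i\in P\setminus F$. An analogous unwinding shows that any product-basic open subset of $U_F$ is already $\tau$-open, yielding equality of the two topologies on $U_F$. I would also note that each coordinate evaluation $\pi_x\colon\Q_p(X)\to\Q_p$ is $\tau$-continuous, since its composition with each inclusion $U_P\hookrightarrow\Q_p(X)$ is continuous.

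For the forward direction, assume $K$ is $\tau$-compact. Since $\Q_p(X)$ is Hausdorff, $K$ is automatically $\tau$-closed. The family $\{U_F:F\subseteq X\text{ finite}\}$ is then an open cover of $\Q_p(X)$ directed by inclusion, and compactness of $K$ produces a single finite $S\subseteq X$ with $K\subseteq U_S$. Norm-boundedness follows by applying each $\pi_x$: the image $\pi_x(K)$ is a compact, hence norm-bounded, subset of $\Q_p$ for each $x\in S$, while $|\xi(x)|_p\leq 1$ holds automatically for $x\notin S$ by the inclusion $K\subseteq U_S$.

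For the converse, suppose $K$ is $\tau$-closed, norm-bounded by some $N$, and contained in $U_S$ for a finite $S$. Then $K$ is closed in $U_S$ for the $\tau$-subspace topology, and hence for the product topology by the preliminary step. Since $B_N:=\{a\in\Q_p:|a|_p\leq N\}$ is compact in $\Q_p$, Tychonoff's theorem provides a compact set $\prod_{x\in S}B_N\times\prod_{x\notin S}\Z_p$ in $U_S$ containing $K$, so the closed subset $K$ is compact in $U_S$, and transporting along the continuous inclusion $U_S\hookrightarrow\Q_p(X)$ makes $K$ compact in $\tau$. The main obstacle I anticipate is not technical but organizational: one must distinguish carefully between the global topology $\tau$, its restriction to $U_S$, and the genuine product topology on $U_S$; once the preliminary identification is in place, everything else reduces to a standard Tychonoff argument.
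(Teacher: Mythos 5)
Your proof is correct and complete; the paper itself omits the argument (the lemma is merely labelled ``easy to verify''), and what you give is exactly the standard verification one would expect: identify the $\tau$-subspace topology on each $U_F=\prod_{i\in F}\Q_p\times\prod_{i\notin F}\Z_p$ with the product topology, extract a single $U_S$ from the directed open cover for the forward direction, and invoke Tychonoff together with compactness of closed balls in $\Q_p$ for the converse. Your explicit care in separating $\tau$, its restriction to $U_S$, and the product topology on $U_S$ is precisely the point that makes the ``easy'' verification rigorous.
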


 We want to investigate some further properties of the norm and of norm-continuous operators. The following two lemmas are easy to verify: 

\begin{lemma}\label{norm-completeness of our space}
The space $\Q_p(X)$ is complete with respect to the norm. 
\end{lemma}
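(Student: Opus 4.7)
The plan is to imitate the classical proof that $\ell^\infty$-type spaces are complete, working coordinatewise and then controlling the Cauchy tail uniformly.

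First I would take a norm-Cauchy sequence $(\xi_n)_{n\in\N}$ in $\Q_p(X)$. For each fixed $i\in X$ the estimate $|\xi_n(i)-\xi_m(i)|_p\leq\|\xi_n-\xi_m\|$ shows that $(\xi_n(i))_n$ is a Cauchy sequence in $\Q_p$; by completeness of $\Q_p$ it has a limit, which I call $\xi(i)$. This defines a map $\xi\colon X\to\Q_p$, and the only work left is to check that (a) $\xi$ actually lies in $\Q_p(X)$, and (b) the convergence $\xi_n\to\xi$ takes place in the norm.

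For (a), I would use that Cauchy sequences are eventually contained in a norm-ball of radius $1$: pick $N$ so large that $\|\xi_n-\xi_N\|\leq 1$ for every $n\geq N$. By definition of the maximum-norm this means $|\xi_n(i)-\xi_N(i)|_p\leq 1$ for every $i\in X$ and every $n\geq N$. Since $\xi_N\in\Q_p(X)$, there is a finite set $P\subseteq X$ such that $|\xi_N(i)|_p\leq 1$ for all $i\notin P$; the ultrametric inequality then gives $|\xi_n(i)|_p\leq 1$ for $i\notin P$ and $n\geq N$. Passing to the limit $n\to\infty$, this inequality survives (the closed unit ball of $\Q_p$ is closed), so $|\xi(i)|_p\leq 1$ for $i\notin P$, showing $\xi\in\Q_p(X)$.

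For (b), given $\varepsilon>0$ I would choose $N$ with $\|\xi_n-\xi_m\|<\varepsilon$ for all $n,m\geq N$. Then for every single coordinate $i\in X$ we have $|\xi_n(i)-\xi_m(i)|_p<\varepsilon$; fixing $n\geq N$ and letting $m\to\infty$, the continuity of $|\cdot|_p$ on $\Q_p$ yields $|\xi_n(i)-\xi(i)|_p\leq\varepsilon$ uniformly in $i$, whence $\|\xi_n-\xi\|\leq\varepsilon$. This gives norm-convergence $\xi_n\to\xi$ and finishes the argument.

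There is no real obstacle here; the only mild subtlety is that the maximum defining the norm is a priori taken over an infinite set, so one must be careful that the uniform coordinatewise bound from the Cauchy condition genuinely passes to the supremum — which it does, precisely because the ultrametric maximum is controlled by the same bound as each individual coordinate, and because the argument of step (a) guarantees that $\xi_n-\xi\in\Q_p(X)$ so that the norm is well-defined.
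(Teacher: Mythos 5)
Your proof is correct and is exactly the standard coordinatewise completeness argument that the paper has in mind when it declares this lemma ``easy to verify'' and omits the proof. Both steps --- membership of the limit in $\Q_p(X)$ via the ultrametric inequality against a single $\xi_N$, and the passage from the uniform Cauchy bound to the norm bound on $\xi_n-\xi$ --- are carried out correctly.
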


%\begin{proof} Let $(\xi_n)_{n\in\N}$ be a Cauchy sequence with respect to $\|\ \|$. It is evident that for all $x\in X$, also $(\xi_n(x))$ is a Cauchy sequence and therefore, $(\xi_n)$ converges entrywise to a function $\xi\colon X\rightarrow\Q_p$. It is clear that this convergence is uniform on $X$. \\
% It remains to show that $\xi\in\Q_p(X)$. Choose an index $N$ such that $\|\xi_n-\xi_N\|\leq1$ for $n\geq N$. Then, there is a subset $K\subseteq X$ with $|X\setminus K|<\infty$ and $|\xi_N(k)|_p\leq1$ for $k\in K$. Because of $\|\xi_n-\xi_N\|\leq1$ for $n\geq N$, also $|\xi_n(k)|_p\leq1$ holds for $n\geq N$ and thus also $|\xi(k)|_p\leq1$ (for $k\in K$). Therefore, we have $\xi\in\Q_p(X)$ and the lemma is proved. 
%\end{proof}

\begin{lemma}\label{boundedness and continuity}
Let $A\colon \Q_p(X)\rightarrow\Q_p(X)$ be a $\Z_p$-linear map. Then, $A$ is norm-continuous if and only if $A$ is bounded, i.\,e. there is $C>0$ such that 
\begin{align*}
\forall\xi\in\Q_p(X)\colon \|A\xi\|\leq C\|\xi\|. 
\end{align*}
\end{lemma}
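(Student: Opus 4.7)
The implication (bounded $\Rightarrow$ norm-continuous) is immediate: the inequality $\|A\xi - A\eta\| = \|A(\xi-\eta)\| \leq C\|\xi-\eta\|$ exhibits $A$ as Lipschitz with respect to the norm metric. The content lies in the converse, and the plan is to extract a uniform bound from continuity at the origin alone, exploiting the ultrametric structure. Specifically, one would choose $\delta > 0$ so that $\|\xi\| \leq \delta$ implies $\|A\xi\| \leq 1$, fix $N \in \Z$ with $p^{-N} \leq \delta$, and show that $C := p^N$ works.

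Given a nonzero $\xi \in \Q_p(X)$ with $\|\xi\| = p^{-m}$, the strategy is to compare $\xi$ to a companion vector $\eta$ of norm at most $p^{-N}$, for which the continuity hypothesis directly gives $\|A\eta\| \leq 1$, and then to transfer this estimate back by rescaling by a suitable power of $p$. The crucial point is that this rescaling must use a power of $p$ that lies in $\Z_p$ in order to invoke the $\Z_p$-linearity of $A$. When $m \leq N$, the scalar $p^{N-m}$ already lies in $\Z_p$, so one directly sets $\eta := p^{N-m}\xi$ and uses $A\eta = p^{N-m}A\xi$ to read off $\|A\xi\| \leq p^{N-m} = p^N\|\xi\|$. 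When $m > N$, however, $p^{N-m}$ is no longer in $\Z_p$; here one instead observes that every entry of $\xi$ lies in $p^{m-N}\Z_p$, so entrywise division of $\xi$ by $p^{m-N}$ produces a genuine element $\eta \in \Q_p(X)$ of norm $p^{-N}$, and the relation $\xi = p^{m-N}\eta$ now involves the $\Z_p$-scalar $p^{m-N}$, giving $A\xi = p^{m-N}A\eta$ and the same bound $\|A\xi\| \leq p^N\|\xi\|$.

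The main obstacle is precisely this asymmetry: the absence of a $\Q_p$-vector space structure on $\Q_p(X)$ forbids free rescaling by arbitrary powers of $p$, so the naive reduction to the ``unit ball of radius $\delta$'' is not directly available via the module operations. The non-Archimedean property of the norm nevertheless saves the situation, since whenever the scalar one would like to apply falls outside $\Z_p$ its inverse lies in $\Z_p$, and one can instead apply the $\Z_p$-linearity of $A$ in the opposite direction.
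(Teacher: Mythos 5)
The paper states this lemma without proof (it is dismissed as ``easy to verify''), and your argument is the natural one and is essentially correct: reduce to continuity at $0$, and rescale by whichever of $p^{N-m}$, $p^{m-N}$ lies in $\Z_p$, which is exactly the right way to handle the fact that $\Q_p(X)$ is only a $\Z_p$-module. One small normalization is needed: you should insist on $N\geq 0$ (equivalently, shrink $\delta$ to $\min\{\delta,1\}$ first). In the case $m>N$ your companion vector is $\eta=p^{N-m}\xi$, whose entries lie in $p^{N}\Z_p$; if $N<0$ and $\xi$ has infinitely many nonzero entries in $\Z_p$, then $\eta$ has infinitely many entries outside $\Z_p$ and so does not belong to $\Q_p(X)$ (indeed your own assertion that the entries of $\xi$ lie in $p^{m-N}\Z_p$ already presupposes $N\geq0$). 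With $N\geq0$ fixed, both cases go through and $C=p^{N}$ works as claimed.
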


%\begin{proof} It is clear that a norm-bounded $\Z_p$-linear map is norm-continuous. \\
% Suppose that $A$ is norm-continuous. If there was no such constant $C$ with the property of the lemma, we would be able to find a sequence $\xi_n$ in $\Q_p(X)$ with $\|A\xi_n\|>p^n\|\xi_n\|$ for $n\in\N$. Taking $\|\xi_n\|\xi_n$ instead of $\xi_n$, we can suppose $\|\xi_n\|=1$. But then, the sequence $p^n\xi_n$ converges to zero for $n\rightarrow\infty$ whereas $\|A(p^n\xi_n)\|>\|\xi_n\|=1$ stays bounded away from zero, a contradiction. Therefore, we have proved the lemma. 
%\end{proof}

\begin{lemma}\label{continuity in different topologies}
A $\tau$-continuous $\Z_p$-linear map on $\Q_p(X)$ is also norm-continuous. 
\end{lemma}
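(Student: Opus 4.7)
The plan is to extract a single norm-bound from the $\tau$-compactness of the subgroup $\Z_p(X)\subseteq\Q_p(X)$, then bootstrap it to the whole space using $\Z_p$-linearity. The key input we will exploit is the combination of Tychonoff's theorem (which makes $\Z_p(X)=\prod_{i\in X}\Z_p$ $\tau$-compact) with Lemma \ref{compactness lemma} (which turns $\tau$-compactness into norm-boundedness).

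First, since $A$ is $\tau$-continuous, the image $A(\Z_p(X))$ is $\tau$-compact. By Lemma \ref{compactness lemma}, it is norm-bounded, so there exists $C>0$ with $\|A\eta\|\leq C$ for all $\eta\in\Z_p(X)$. This is the heart of the proof: all the information we need about $A$ is concentrated in its behaviour on the compact subgroup $\Z_p(X)$.

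The main step, and where I expect the only real work, is to upgrade this uniform bound on $\Z_p(X)$ to an inequality $\|A\xi\|\leq C\|\xi\|$ valid for every $\xi\in\Q_p(X)$. Because $A$ is only $\Z_p$-linear (not $\Q_p$-linear), we cannot simply rescale $\xi$ to unit norm; this is the main obstacle. The remedy is that $p$-adic norms are discrete, so for $\xi\neq 0$ we may write $\|\xi\|=p^m$ for a unique $m\in\Z$ and split into two cases. If $m\geq 0$, then $p^m\in\Z_p$ and $\eta:=p^m\xi$ lies in $\Z_p(X)$; applying $\Z_p$-linearity gives $A\eta=p^mA\xi$, and rearranging yields $\|A\xi\|=p^m\|A\eta\|\leq p^mC=C\|\xi\|$. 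If $m<0$, then every coordinate of $\xi$ is already divisible by $p^{-m}$, so one can define $\eta\in\Z_p(X)$ coordinatewise by $\eta(i):=p^m\xi(i)$ and obtain $\xi=p^{-m}\eta$ as a genuine $\Z_p$-scalar multiplication, from which $A\xi=p^{-m}A\eta$ and the same estimate $\|A\xi\|\leq C\|\xi\|$ follows.

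Having established the bound $\|A\xi\|\leq C\|\xi\|$ in both cases (the case $\xi=0$ being trivial), Lemma \ref{boundedness and continuity} immediately delivers the norm-continuity of $A$.
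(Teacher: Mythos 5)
Your proof is correct and follows the same route as the paper: both deduce norm-boundedness of $A(\Z_p(X))$ from the $\tau$-compactness of $\Z_p(X)$ via Lemma \ref{compactness lemma} and then conclude via boundedness. You merely spell out the rescaling step (splitting on the sign of $m$ with $\|\xi\|=p^m$) that the paper leaves implicit in the phrase ``this fact implies the boundedness''.
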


\begin{proof} Suppose that $A$ is a $\tau$-continuous $\Z_p$-linear map, i.\,e. that $A\in \B(\Q_p(X))$. As $\Z_p(X)$ is $\tau$-compact, also its image under $A$ is $\tau$-compact and therefore norm-bounded by Lemma \ref{compactness lemma}. This fact implies the boundedness and hence the continuity of $A$. 
\end{proof}

Unfortunately, the converse does not hold (this is a consequence for example of Theorem 2.4.1 in \cite{claussnitzer}). 

\begin{definition}\label{operator norm}
For each $A\in \B(\Q_p(X))$, we define its \emph{operator norm} in the usual way by 
\begin{align*}
\|A\|:=\sup_{\xi\in\Q_p(X), \|\xi\|\leq1}\|A\xi\|. 
\end{align*}
\end{definition}

 By Lemma \ref{continuity in different topologies}, this is a real number and it is clear that it makes $\B(\Q_p(X))$ an ultra-normed $\Z_p$-module. For $A, B\in \B(\Q_p(X))$ and $\xi\in\Q_p(X)$, we have 
\begin{align*}
\|A+B\|\leq \max\{ \|A\|, \|B\|\} \ \ \|AB\|\leq\|A\|\|B\|, \ \ \|A\xi\|\leq\|A\|\|\xi\|. 
\end{align*}

\begin{lemma}\label{completeness of the operators}
The $\Z_p$-module $\B(\Q_p(X))$ is norm-complete. 
\end{lemma}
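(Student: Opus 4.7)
The plan is the standard Banach-space completeness argument, with the non-trivial twist that norm-continuity is strictly weaker than $\tau$-continuity on $\Q_p(X)$, so the $\tau$-continuity of the candidate norm-limit requires a separate verification.

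First I would take a Cauchy sequence $(A_n)$ in $\B(\Q_p(X))$ and observe that for every $\xi\in\Q_p(X)$ the inequality $\|A_n\xi-A_m\xi\|\leq\|A_n-A_m\|\|\xi\|$ makes $(A_n\xi)$ norm-Cauchy in $\Q_p(X)$; by Lemma \ref{norm-completeness of our space} it converges to some element, which I denote $A\xi$. Continuity of the module operations with respect to the norm turns $\Z_p$-linearity of the $A_n$ into $\Z_p$-linearity of $A$. A norm-Cauchy sequence in an ultra-normed space is norm-bounded, so $\|A_n\|\leq C$ for some $C>0$, and passing to the limit in $\|A_n\xi\|\leq C\|\xi\|$ gives $\|A\xi\|\leq C\|\xi\|$; by Lemma \ref{boundedness and continuity} the map $A$ is therefore norm-continuous. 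The usual trick (fix $n$, let $m\to\infty$ in $\|(A_n-A_m)\xi\|\leq\|A_n-A_m\|\|\xi\|$, then take the supremum over $\|\xi\|\leq 1$) then shows $\|A_n-A\|\to 0$.

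The crux is showing $A\in\B(\Q_p(X))$, i.e. that $A$ is $\tau$-continuous. For this I would apply the equivalence (c) in Lemma \ref{closed graph theorem}: assume $\xi_n\to 0$ and $A\xi_n\to\eta$ both in $\tau$, and aim at $\eta=0$. By Lemma \ref{convergence in our Hilbert space} a $\tau$-convergent sequence is norm-bounded, say $\|\xi_n\|\leq M$. Given $\epsilon>0$, choose $k$ with $\|A-A_k\|\cdot M\leq\epsilon$; then $\|A\xi_n-A_k\xi_n\|\leq\epsilon$ for every $n$, so each coordinate $(A\xi_n-A_k\xi_n)(x)$ has $p$-adic absolute value at most $\epsilon$. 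Since $A_k$ is already $\tau$-continuous, $A_k\xi_n\to 0$ in $\tau$ and hence coordinate-wise; combined with the coordinate-wise convergence $A\xi_n\to\eta$, this forces $|\eta(x)|_p\leq\epsilon$ for every $x$. As $\epsilon$ was arbitrary, $\eta(x)=0$ for each $x$ and hence $\eta=0$.

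The main obstacle is precisely this upgrade from norm- to $\tau$-continuity of the limit operator, which the text has already warned us about. The way out is to exploit that $\tau$-convergence is witnessed coordinate-wise (together with a finite-support condition that is automatic here) and to use the uniform approximation $\|A-A_k\|\to 0$ by the $\tau$-continuous operators $A_k$; this turns a single operator-norm estimate into a pointwise estimate on every coordinate of the supposed limit $\eta$, which pins it down to be zero.
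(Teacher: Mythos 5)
Your proof is correct, but it follows a genuinely different route from the paper's. The paper deliberately postpones this lemma until after Theorem \ref{matrix representation of the operators} and Lemma \ref{norm of a matrix}, and then disposes of it by working entirely with matrices: for a Cauchy sequence $(A_n)$ the entries $(A_n)_{ij}$ are uniformly Cauchy (since $|(A_n)_{ij}-(A_m)_{ij}|_p\leq\|A_n-A_m\|$), the entrywise limit matrix inherits conditions (a) and (b) of Theorem \ref{matrix representation of the operators} from any $A_n$ with $\|A_n-A_m\|\leq 1$ for all $m\geq n$, and norm convergence is then read off from Lemma \ref{norm of a matrix}; in particular the $\tau$-continuity of the limit comes for free from the characterization of admissible matrices. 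You instead run the classical Banach-space argument and confront the real issue --- that the norm-limit of $\tau$-continuous operators is a priori only norm-continuous, which is strictly weaker here --- head on, resolving it with criterion (c) of Lemma \ref{closed graph theorem} together with the observation that $\tau$-convergent sequences are norm-bounded and that $\tau$-convergence is witnessed coordinatewise. Both arguments are sound; the paper's is shorter once the matrix machinery is in place, while yours is self-contained at this point of the text, does not presuppose the matrix representation, and makes explicit exactly where the closed graph theorem is doing the work. The only step worth spelling out slightly more is the norm-boundedness of a $\tau$-convergent sequence: it follows from Lemma \ref{convergence in our Hilbert space} because outside a fixed finite set of coordinates all entries lie in $\Z_p$, and on each of the remaining finitely many coordinates the entries form a convergent, hence bounded, sequence in $\Q_p$.
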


 Once we will have established the matrix representation of the operators in $\B(\Q_p(X))$ (Theorem \ref{matrix representation of the operators}), this lemma will be easy to show, and therefore we skip the proof for the moment. 

\subsection{Mahler's algebra and continuous functional calculus}\label{functional calculus}

 For $x\in\Z_p$ and $n\in\N$, we will need the binomial coefficient 
\begin{align*}
\binom{x}{k}:=\frac{x(x-1)\ldots(x-(n-1))}{n!} \in \mathbb Z_p. 
\end{align*}
The next lemma has a nice combinatorial proof.

\begin{lemma}\label{nice combinatorial identity}
\begin{itemize}
 \item[\emph{(a)}] For $x\in\Z_p$ and $m, n\in\N$, the following identity holds: 
\begin{align*}
\binom{x}{m}\binom{x}{n}=\sum_{l=m\vee n}^{m+n}\frac{l!}{(m+n-l)!(l-m)!(l-n)!}\binom{x}{l}. 
\end{align*}
\item[\emph{(b)}] For $x\in\Z_p$ and $n\in\N$, the following identity holds: 
\begin{align*}
x\binom{x}{n}=n\binom{x}{n}+(n+1)\binom{x}{n+1}. 
\end{align*}
\end{itemize}
\end{lemma}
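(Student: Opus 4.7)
The plan is to prove both identities first for $x \in \N$ via direct combinatorial counting, and then extend to $x \in \Z_p$ using the elementary observation that both sides of each identity are polynomial expressions in $x$. Since a polynomial in $x$ (with coefficients in $\Q_p$, say) that vanishes at infinitely many points, e.g. on $\N$, is identically zero, any polynomial identity valid for $x\in\N$ automatically holds for all $x\in\Z_p$. The main work therefore lies in the two combinatorial arguments, neither of which I expect to be the main obstacle.

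For part (a), fix $x\in\N$ and count in two ways the set of pairs $(A,B)$ where $A,B\subseteq\{1,\ldots,x\}$ with $|A|=m$ and $|B|=n$. Choosing $A$ and $B$ independently gives the left-hand side $\binom{x}{m}\binom{x}{n}$. Alternatively, one stratifies according to $l:=|A\cup B|$, which must satisfy $m\vee n\leq l\leq m+n$. Given such an $l$, one first chooses the union $U=A\cup B$ of size $l$ in $\binom{x}{l}$ ways, and then partitions $U$ into three disjoint blocks $A\setminus B$, $B\setminus A$, and $A\cap B$ of sizes $l-n$, $l-m$, and $m+n-l$ respectively. The number of such ordered partitions of a set of size $l$ is the multinomial coefficient
\begin{align*}
\frac{l!}{(l-n)!(l-m)!(m+n-l)!},
\end{align*}
which yields the right-hand side after summing over $l$.

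For part (b), again fix $x\in\N$ and count pairs $(S,e)$ with $S\subseteq\{1,\ldots,x\}$ of size $n$ and $e\in\{1,\ldots,x\}$. Direct counting gives $x\binom{x}{n}$. Splitting according to whether $e\in S$ or $e\notin S$: in the first case, one first picks $S$ (in $\binom{x}{n}$ ways) and then $e$ among the $n$ elements of $S$, giving $n\binom{x}{n}$; in the second case, $S\cup\{e\}$ is an $(n+1)$-subset with a distinguished element $e$, which can be chosen by picking the $(n+1)$-subset and then the distinguished element, giving $(n+1)\binom{x}{n+1}$. Summing yields the right-hand side.

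To finish, I would observe that in both (a) and (b) the two sides, a priori viewed as elements of $\Z_p$, are the values at $x\in\Z_p$ of fixed polynomials in $\Q[x]$ (each $\binom{x}{k}$ is a polynomial of degree $k$ in $x$, with rational coefficients, taking values in $\Z_p$ when $x\in\Z_p$). The combinatorial arguments establish that these polynomials agree on all $x\in\N$, hence they coincide as elements of $\Q[x]$, and in particular the identities hold for every $x\in\Z_p$. The only subtlety worth mentioning is the range of summation in (a): the multinomial coefficient vanishes outside $m\vee n\leq l\leq m+n$ anyway (because of the factorial of a negative integer in the denominator convention, or simply because the corresponding partitions do not exist), so extending or restricting the summation range causes no issue.
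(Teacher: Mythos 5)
Your proof is correct and for part (a) it is essentially identical to the paper's argument: counting pairs of subsets of sizes $m$ and $n$, stratified by the size $l$ of their union, with the multinomial coefficient counting the partition into $A\setminus B$, $B\setminus A$, $A\cap B$ (the paper likewise reduces to $x\in\N$ and leaves the extension to $\Z_p$ implicit, which you spell out via the polynomial-identity argument). The only difference is in part (b), where you give a direct count of pairs $(S,e)$ while the paper simply specializes part (a) to $m=1$; both are immediate and equally valid.
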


\begin{proof} (a) It is sufficient to show the formula for the case $x\in\N$, $x>m+n$. We assume this.

Then, consider a finite set $X$ with cardinality $|X|=x$. The left side of the above equation is exactly the number of pairs $(M, N)$ of subsets $M, N\subseteq X$ such that $|M|=m$ and $|N|=n$. Each such pair is uniquely characterized by the set $M\cup N$ and the subdivision of $M\cup N$ into the subsets $M\setminus N$, $N\setminus M$ and $M\cap N$ and this is precisely what the right side corresponds to:  Indeed, the number $l$ corresponds to $|M\cup N|$, the binomial coefficient on the right corresponds to the choices of the set $M\cup N$ and the fraction to the number of subdivisions. Hence, the two sides of the equation coincide.

(b) This is just a consequence of the first part of the lemma (set $m=1$). 
\end{proof}

The following theorem is due to Mahler, see \cite{bojanic} for an elementary proof.

\begin{theorem}[Mahler's theorem]\label{Mahler's theorem}
Every element $f\in C(\Z_p, \Z_p)$ has a unique representation of the form 
\begin{align*}
f(x)=\sum_{n=0}^\infty T_n(f)\binom{x}{n} 
\end{align*}
such that  $T_n(f)\in\Z_p$ and $\lim_{n\rightarrow\infty}T_n(f)=0.$ 
The convergence of this series is uniform and the equality 
\begin{align*}
\|f\|_{\text{\emph{sup}}}=\max_{n\in\N}|T_n(f)|_p 
\end{align*}
holds. In other words, there is an isometric isomorphism $\sigma\colon C(\Z_p, \Z_p)\rightarrow c_0(\N, \Z_p)$ of $\Z_p$-modules given by $f\mapsto(T_n(f))_{n\in\N}$. 
\end{theorem}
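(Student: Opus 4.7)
My plan is to prove this by identifying the coefficients $T_n(f)$ with the iterated forward differences $\Delta^n f(0)$, where $\Delta f(x) := f(x+1)-f(x)$. First I would verify the purely algebraic identity $f(k)=\sum_{n=0}^{k}\binom{k}{n}\Delta^n f(0)$ for every integer $k\geq 0$ (a standard induction using Lemma \ref{nice combinatorial identity}(b), or Pascal's rule). This shows that the statement of Mahler's theorem is at least correct on the dense subset $\N\subseteq\Z_p$ and forces $T_n(f)=\Delta^n f(0)$, which will later give uniqueness: if $\sum_n a_n\binom{x}{n}=0$ for all $x\in\Z_p$ with $a_n\to 0$, then evaluating successively at $x=0,1,2,\ldots$ and using $\binom{k}{n}=0$ for $n>k$ forces $a_0=a_1=\cdots=0$ by induction.

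The easy bounds come next. Writing $\Delta^n f(0)=\sum_{k=0}^n(-1)^{n-k}\binom{n}{k}f(k)$, the ultrametric triangle inequality and $f(k)\in\Z_p$ give $T_n(f)\in\Z_p$ and $|T_n(f)|_p\leq\|f\|_{\sup}$. Then once I prove $T_n(f)\to 0$, uniform convergence of $\sum_n T_n(f)\binom{x}{n}$ on $\Z_p$ is immediate (since $|\binom{x}{n}|_p\leq 1$), its sum is a continuous function, and it agrees with $f$ on $\N$ by the algebraic identity above; density of $\N$ and continuity then force the equality on all of $\Z_p$.

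The main obstacle is proving that $T_n(f)=\Delta^n f(0)\to 0$. This is where the compactness of $\Z_p$ enters through uniform continuity. The key observation is that, writing $\tau$ for the shift $(\tau f)(x)=f(x+1)$ and expanding $\Delta^{p^m}=(\tau-1)^{p^m}$ binomially, Kummer's theorem (or direct computation) yields $p\mid\binom{p^m}{j}$ for all $0<j<p^m$, so
\begin{align*}
\Delta^{p^m}f(x) \equiv f(x+p^m)-f(x)\pmod{p\,\Z_p}.
\end{align*}
Given $\varepsilon=p^{-N}>0$, uniform continuity of $f$ provides $m_0$ with $|f(x+p^m)-f(x)|_p<p^{-1}$ for all $m\geq m_0$, hence $\|\Delta^{p^{m_0}}f\|_{\sup}\leq p^{-1}$. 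Now I would iterate the argument on $g:=\Delta^{p^{m_0}}f$, which is still continuous $\Z_p\to\Z_p$, but after rescaling by $p$: applying the same estimate to $p^{-1}g\in C(\Z_p,\Z_p)$ and lifting the factor of $p$ back gives $\|\Delta^{p^{m_0}}g\|_{\sup}\leq p^{-2}$, i.e.\ $\|\Delta^{2p^{m_0}}f\|_{\sup}\leq p^{-2}$. Proceeding inductively yields $\|\Delta^{k p^{m_0}}f\|_{\sup}\leq p^{-k}$, which gives in particular $T_n(f)\to 0$ along the subsequence $n=kp^{m_0}$, and since $\|\Delta\| \leq 1$ in operator norm on $C(\Z_p,\Z_p)$, the whole sequence $T_n(f)$ tends to $0$.

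Finally, the isometry statement $\|f\|_{\sup}=\max_n|T_n(f)|_p$ follows by combining $|T_n(f)|_p\leq\|f\|_{\sup}$ (established above) with the reverse estimate $\|f\|_{\sup}\leq\max_n|T_n(f)|_p$ obtained from the expansion $f(x)=\sum_n T_n(f)\binom{x}{n}$ and the ultrametric inequality together with $|\binom{x}{n}|_p\leq 1$. The $\Z_p$-linear map $\sigma\colon f\mapsto(T_n(f))_n$ is therefore an isometric embedding into $c_0(\N,\Z_p)$; surjectivity is trivial since for any null sequence $(a_n)\in c_0(\N,\Z_p)$ the series $\sum_n a_n\binom{x}{n}$ converges uniformly to a continuous $\Z_p$-valued function whose Mahler coefficients are exactly $(a_n)$ by the uniqueness argument.
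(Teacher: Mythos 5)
The paper does not prove this statement itself; it cites Bojani\'c \cite{bojanic} for an elementary proof, and your strategy --- identifying $T_n(f)$ with $\Delta^n f(0)$, obtaining the expansion on $\N$ from Newton's forward-difference formula, and reducing everything to $\Delta^n f(0)\to0$ via the congruence $\Delta^{p^m}f(x)\equiv f(x+p^m)-f(x)\pmod{p\Z_p}$ --- is essentially that proof. The uniqueness argument, the isometry, and the surjectivity of $\sigma$ are all handled correctly.

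The gap is in the induction that is supposed to give $T_n(f)\to 0$. You fix one $m_0$ with $|f(x+p^{m_0})-f(x)|_p<p^{-1}$ and claim that iterating with the \emph{same} $m_0$ yields $\|\Delta^{kp^{m_0}}f\|_{\sup}\le p^{-k}$. But to apply ``the same estimate'' to $p^{-1}g$ with $g=\Delta^{p^{m_0}}f$, you need $|p^{-1}g(x+p^{m_0})-p^{-1}g(x)|_p<p^{-1}$, i.e.\ a modulus of continuity for $g$ at scale $p^{-m_0}$ that is one power of $p$ \emph{better} than the one you assumed for $f$. Since $g$ is a $\Z$-linear combination of translates of $f$, all you can conclude is $\omega_g\le\omega_f$; the modulus does not improve, and the induction stalls. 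The claimed bound is in fact false: take $f=p^2\binom{x}{3}$ (for $p=2$, $f(x)=4\binom{x}{3}$). Then $|f(x+p^m)-f(x)|_p\le p^{-2}<p^{-1}$ for all $m\ge 0$ and all $x$, so $m_0=0$ is admissible, yet $\Delta^3 f=p^2$ has norm $p^{-2}$, not $\le p^{-3}$. The repair is standard and is what Bojani\'c does: at stage $s$ choose $m_s$ with $\omega_f(p^{-m_s})\le p^{-s}$ (uniform continuity provides such an $m_s$, but it must be allowed to grow with $s$); the estimate $\|\Delta^{p^m}h\|_{\sup}\le\max\{\omega_h(p^{-m}),\,p^{-1}\|h\|_{\sup}\}$ together with $\omega_{\Delta^j f}\le\omega_f$ then gives $\|\Delta^{p^{m_1}+\cdots+p^{m_s}}f\|_{\sup}\le p^{-s}$ by induction on $s$, and your final observation that $\|\Delta\|\le 1$ converts this bound along a subsequence into $T_n(f)\to 0$. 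With that single step repaired, the rest of your argument goes through.
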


\begin{definition}\label{normal contraction}
An operator $A\in \B(\Q_p(X))$ is called a \emph{normal contraction} if the quotient 
\begin{align*}
\binom{A}{n}:=\frac{A(A-1)\ldots(A-(n-1))}{n!}\in \B(\Q_p(X)) 
\end{align*}
is defined and is a contraction, i.\,e. its norm is not greater than one. 
\end{definition}

 It is not difficult to show that $|n!|_p=p^{-\frac{n-s_p(n)}{p-1}}$ for $n\in\N$ where $s_p(n)$ denotes the digit sum in the $p$-adic decomposition 
\begin{align*}
n=\sum_{k=0}^\infty n_kp^k 
\end{align*}
of $n$ (with $n_k\in\{0, \ldots, p-1\}$), i.\,e. 
\begin{align*}
s_p(n)=\sum_{k=0}^\infty n_k. 
\end{align*}
Therefore, we obtain that $A$ is a normal contraction if and only if 
\begin{align*}
\forall n\in\N\colon\|A(A-1)\ldots(A-(n-1))\|\leq p^{-\frac{n-s_p(n)}{p-1}}. 
\end{align*}
For example, a contractive diagonal operator on $\Q_p(X)$ is always a normal contraction.
Note that the formulae in Lemma \ref{nice combinatorial identity} remain true if one replaces $x$ by a normal contraction $A$.  If $A$ is a normal contraction, we obtain a natural functional calculus using Mahler's theorem: 

\begin{theorem}\label{Mahler's functional calculus}
If $A\in \B(\Q_p(X))$ is a normal contraction, then there is a natural contractive homomorphism of $\Z_p$-algebras 
\begin{align*}
\pi_A\colon C(\Z_p, \Z_p)\rightarrow \B(\Q_p(X)) 
\end{align*}
with $\pi_A(\operatorname{id}_{\Z_p})=A$. 
\end{theorem}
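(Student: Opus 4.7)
The plan is to define $\pi_A$ via the Mahler expansion: for $f \in C(\Z_p, \Z_p)$ with Mahler coefficients $T_n(f) \in \Z_p$, set
$$\pi_A(f) := \sum_{n=0}^\infty T_n(f)\binom{A}{n} \in \B(\Q_p(X)).$$
Since $A$ is a normal contraction, $\|\binom{A}{n}\| \leq 1$ for every $n$, and $|T_n(f)|_p \to 0$ by Mahler's theorem, so the partial sums form a Cauchy sequence in the ultra-normed module $\B(\Q_p(X))$, which is complete by Lemma \ref{completeness of the operators}. The ultra-norm estimate together with Mahler's theorem yields $\|\pi_A(f)\| \leq \max_n |T_n(f)|_p = \|f\|_{\sup}$, so $\pi_A$ is contractive. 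Additivity and $\Z_p$-linearity follow from the fact that $f \mapsto (T_n(f))_n$ is a $\Z_p$-linear bijection together with elementary manipulations of convergent ultrametric series.

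For the normalization: the constant function $1$ has Mahler expansion with $T_0 = 1$ and $T_n = 0$ for $n \geq 1$, so $\pi_A(1) = \binom{A}{0} = \operatorname{id}_{\Q_p(X)}$, and since $\operatorname{id}_{\Z_p}(x) = \binom{x}{1}$, we have $\pi_A(\operatorname{id}_{\Z_p}) = \binom{A}{1} = A$.

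The main step is multiplicativity. Since $A$ commutes with itself, Lemma \ref{nice combinatorial identity}(a) -- originally proved for $x \in \Z_p$, but a polynomial identity valid in any commutative $\Z_p$-algebra -- lifts to
$$\binom{A}{m}\binom{A}{n} = \sum_{l=m\vee n}^{m+n} \frac{l!}{(m+n-l)!(l-m)!(l-n)!}\binom{A}{l},$$
and the right-hand side is precisely the image under $\pi_A$ of the Mahler expansion of the product $\binom{\cdot}{m}\binom{\cdot}{n}$. By $\Z_p$-bilinearity, $\pi_A(pq) = \pi_A(p)\pi_A(q)$ then holds for all functions $p, q$ whose Mahler expansions are finite. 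For general $f, g \in C(\Z_p, \Z_p)$, I truncate their Mahler expansions to $f_N, g_N$; uniform convergence $f_N \to f$, $g_N \to g$ implies $f_N g_N \to fg$ uniformly, and contractivity of $\pi_A$ yields norm convergence $\pi_A(f_N) \to \pi_A(f)$, $\pi_A(g_N) \to \pi_A(g)$, and $\pi_A(f_N g_N) \to \pi_A(fg)$. Submultiplicativity of the operator norm ensures $\pi_A(f_N)\pi_A(g_N) \to \pi_A(f)\pi_A(g)$, and taking the limit of $\pi_A(f_N g_N) = \pi_A(f_N)\pi_A(g_N)$ gives the desired identity.

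The main obstacle is the multiplicativity step; the critical point is justifying that the combinatorial identity of Lemma \ref{nice combinatorial identity} survives when one substitutes the operator $A$, which requires only that $A$ commutes with itself together with the fact that both sides are polynomials in $x$ with $\Z_p$-coefficients whose equality on $\Z_p$ forces equality as formal polynomials. Once this is secured, the density of finite Mahler combinations and the continuity of composition carry the identity to all of $C(\Z_p, \Z_p)$ by a standard approximation argument.
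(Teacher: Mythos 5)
Your proposal is correct and follows essentially the same route as the paper: define $\pi_A(f)=\sum_n T_n(f)\binom{A}{n}$ via the Mahler coefficients, use the normal-contraction hypothesis plus completeness of $\B(\Q_p(X))$ for convergence and contractivity, and derive multiplicativity from the combinatorial identity of Lemma \ref{nice combinatorial identity}(a) transported to the operator $A$. The paper states this in one line; you supply the details (including the density/continuity argument for multiplicativity) that the paper leaves implicit.
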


 As usual, we write $f(A)$ instead of $\pi_A(f)$.
 Note that for a normal contraction $A$ and $f\in C(\Z_p, \Z_p)$, also the operator $f(A)$ is a normal contraction because as $f(A)$ can be represented by a function in $C(\Z_p, \Z_p)$, also the binomial coefficients $\tbinom{f(A)}{n}$ can and are therefore well-defined contractions. 
 
\begin{proof} By Theorem \ref{Mahler's theorem}, there is a natural isometric isomorphism of $\Z_p$-modules $\sigma\colon C(\Z_p, \Z_p)\rightarrow c_0(\N, \Z_p)$ satisfying $\sigma(\operatorname{id}_{\Z_p})=\delta_1$. For $f\in C(\Z_p, \Z_p)$, define 
\begin{align*}
\pi_A(f):=\sum_{n=0}^\infty\sigma(f)(n)\binom{A}{n}. 
\end{align*}
This definition yields a contractive homomorphism of $\Z_p$-algebras and the proof is finished. 
\end{proof}

 For example, if $A\in \B(\Q_p(X))$ is a normal contraction and $z\in p\Z_p$, the operator 
\begin{align*}
F_z(A):=\sum_{n=0}^\infty z^n\binom{A-1}{n} 
\end{align*}
is well-defined.

%On the other hand, let $A\in \B(\Q_p(X))$ be an arbitrary operator with $\|A\|<1$ and $x\in\Z_p$. Then, 
%\begin{align*}
%(1+A)^x:=\sum_{n=0}^\infty \binom{x}{n}A^n 
%\end{align*}
%is well-defined. \\
\begin{example} An example of a normal contraction $A$ acting on the space $\Q_p(\N)$ is given by the operator defined by $A(\delta_n)=n\delta_n+(n+1)\delta_{n+1}$. Indeed, one can show by induction that the $n$-th row of the matrix representing the operator $A(A-1)\ldots(A-k)$ (cf. Theorem \ref{matrix representation of the operators}) is given by 
\begin{align*}
\left(\binom{k+1}{n-i}\binom{n}{k+1}(k+1)!\right)_{i\in\N} 
\end{align*}
for $n, k\in\N$. 
\end{example}

Let's recall the following lemma.

\begin{lemma}\label{number theory and convergence}
The sequence $(f_n)$ of functions $\Z_p\rightarrow\Z_p$ that is defined by 
\begin{align*}
f_n(x)=x^{p^n} 
\end{align*}
for all $x\in\Z_p$ converges uniformly to a function that is constant on each equivalence class for the equivalence relation of having distance less than $1$. 
\end{lemma}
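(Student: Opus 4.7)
The plan is to identify the limiting function as the Teichmüller lift (extended by $0$ on $p\Z_p$) and to exploit the standard $p$-adic estimate that raising an element of $1+p\Z_p$ to the $p$-th power kills one more factor of $p$. First, I would note that the equivalence classes of the relation $|x-y|_p<1$ on $\Z_p$ are precisely the $p$ cosets of $p\Z_p$, indexed by residues modulo $p$; so the claim on the limit amounts to showing $f(x)=f(y)$ whenever $x\equiv y\pmod{p}$.

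Next, I would split $\Z_p=p\Z_p\sqcup\Z_p^\times$ and handle the two pieces separately. On $p\Z_p$ one has $|f_n(x)|_p=|x|_p^{p^n}\leq p^{-p^n}$, so $f_n\rightarrow 0$ uniformly on $p\Z_p$. On $\Z_p^\times$, Fermat's little theorem gives $x^{p-1}=1+pu_x$ for some $u_x\in\Z_p$, and the key step is the uniform estimate
\begin{align*}
|(1+pu)^{p^n}-1|_p\leq p^{-(n+1)}\qquad (u\in\Z_p,\ n\geq 0),
\end{align*}
which I would prove by induction on $n$: writing $(1+pu)^{p^n}=1+p^{n+1}v$, the expansion
\begin{align*}
(1+p^{n+1}v)^p=1+p^{n+2}v+\sum_{k=2}^{p}\binom{p}{k}\,p^{k(n+1)}v^k
\end{align*}
lies in $1+p^{n+2}\Z_p$ (checking each $k$: for $2\leq k\leq p-1$ one uses $v_p\bigl(\binom{p}{k}\bigr)=1$, and the tail term $k=p$ contributes $v_p\geq p(n+1)\geq n+2$ for $n\geq 0$; the case $p=2$ works out directly from the two surviving terms). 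Plugging in, for $x\in\Z_p^\times$,
\begin{align*}
|f_{n+1}(x)-f_n(x)|_p=|x^{p^n(p-1)}-1|_p=|(1+pu_x)^{p^n}-1|_p\leq p^{-(n+1)},
\end{align*}
and combined with the (much stronger) estimate on $p\Z_p$ this gives $\|f_{n+1}-f_n\|_{\text{sup}}\leq p^{-(n+1)}$. Hence $(f_n)$ is uniformly Cauchy in $C(\Z_p,\Z_p)$ and converges uniformly to some continuous limit $f$.

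Finally, to see that $f$ is constant on each coset of $p\Z_p$, I would use the companion lifting lemma: if $x\equiv y\pmod{p}$, then $x^{p^n}\equiv y^{p^n}\pmod{p^{n+1}}$ for all $n\geq 0$, proved by an elementary induction based on the factorization $a^p-b^p=(a-b)(a^{p-1}+a^{p-2}b+\ldots+b^{p-1})$ together with the observation that when $a\equiv b\pmod{p^k}$ the second factor is congruent to $pa^{p-1}\pmod{p^{k+1}}$. Passing to the limit $n\rightarrow\infty$ then gives $f(x)=f(y)$, which is the required invariance. The resulting $f$ is in fact the Teichmüller character extended by $0$, although that identification is not needed for the statement. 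I expect the main obstacle to be the clean verification of the uniform $p$-adic estimate on $\Z_p^\times$; once that is in hand, the remaining steps are essentially bookkeeping.
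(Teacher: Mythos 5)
Your proof is correct and is essentially the standard argument; the paper itself does not prove this lemma but defers to Lemma 1.6.5 of the first author's thesis, which establishes the same statement along the same lines (the limit being the Teichm\"uller representative, extended by $0$ on $p\Z_p$). One microscopic caveat: for $p=2$ the second factor in $a^p-b^p=(a-b)(a^{p-1}+\ldots+b^{p-1})$ is congruent to $pa^{p-1}$ only modulo $p^{k}$, not $p^{k+1}$, but your induction only needs that this factor is divisible by $p$ (so that $v_p(a^p-b^p)\geq v_p(a-b)+1$), which does hold, so the argument is unaffected.
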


 The result is well known and the limit $\lim_{n\rightarrow\infty}f_n(x)$ is called the \emph{Teichmüller representative} of $x$ (cf. \cite{maninpanchishkin}, Chapter 4.3.4). The proof is also repeated in \cite{claussnitzer}, Lemma 1.6.5. \\

 Now, it is possible to define a polynomial with coefficients in $\Z_p$ mapping all the non-zero Teichmüller representatives to $0$ and $0$ to $1$, namely the polynomial $P_{\Q_p}(X):=\frac{(X-\lambda_1)\ldots(X-\lambda_{p-1})}{(-1)^{p-1}\lambda_1\ldots\lambda_{p-1}}$. 

\begin{corollary}\label{normal contractions and number theory}
Suppose that $A\in\B(\Q_p(X))$ is a normal contraction. Then, the sequence $P_{\Q_p}(A^{p^n})$ converges to an idempotent in the operator norm. 
\end{corollary}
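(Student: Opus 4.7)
The plan is to apply Mahler's functional calculus (Theorem \ref{Mahler's functional calculus}) to the functions $g_n \in C(\Z_p, \Z_p)$ given by $g_n(x) := P_{\Q_p}(x^{p^n})$. Since $\pi_A$ is a homomorphism of $\Z_p$-algebras with $\pi_A(\operatorname{id}_{\Z_p}) = A$, evaluation of polynomials commutes with $\pi_A$, and hence $\pi_A(g_n) = P_{\Q_p}(A^{p^n})$. It therefore suffices to produce a uniform limit $g \in C(\Z_p, \Z_p)$ of $(g_n)$ satisfying $g^2 = g$: the contractivity of $\pi_A$ will then promote the uniform convergence $g_n \to g$ to the operator-norm convergence $P_{\Q_p}(A^{p^n}) = \pi_A(g_n) \to \pi_A(g)$, while the multiplicativity of $\pi_A$ will transport the identity $g^2 = g$ into $\pi_A(g)^2 = \pi_A(g)$, as required.

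To identify this limit $g$, I would invoke Lemma \ref{number theory and convergence}: the functions $f_n(x) = x^{p^n}$ converge uniformly on $\Z_p$ to the Teichmüller-representative map $T\colon \Z_p \to \Z_p$, whose image is contained in $\{0, \lambda_1, \ldots, \lambda_{p-1}\}$. Because $P_{\Q_p}$ is a polynomial on the compact set $\Z_p$, it is uniformly continuous, so the compositions $g_n = P_{\Q_p} \circ f_n$ converge uniformly to $g := P_{\Q_p} \circ T$. By the very definition of $P_{\Q_p}$, the function $g$ takes the value $1$ on $p\Z_p$ (where $T$ vanishes) and the value $0$ on $\Z_p \setminus p\Z_p$ (where $T$ lands in $\{\lambda_1, \ldots, \lambda_{p-1}\}$). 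Being $\{0,1\}$-valued, it satisfies $g^2 = g$ pointwise in $C(\Z_p, \Z_p)$, which is exactly what the plan requires.

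I do not anticipate any serious obstacle here; the only step warranting attention is the passage from uniform convergence of the $g_n$ to operator-norm convergence of $P_{\Q_p}(A^{p^n})$, but this is immediate from the contractivity of the Mahler functional calculus supplied by Theorem \ref{Mahler's functional calculus}. All remaining ingredients are formal consequences of $\pi_A$ being a homomorphism of $\Z_p$-algebras.
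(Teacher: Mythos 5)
Your proof is correct and follows exactly the route the paper intends: the corollary is stated immediately after Lemma \ref{number theory and convergence} and the definition of $P_{\Q_p}$ precisely so that one combines the uniform convergence $x^{p^n}\to T(x)$ with the contractive, multiplicative functional calculus of Theorem \ref{Mahler's functional calculus}. The identification of the limit function as the ($\{0,1\}$-valued, hence idempotent) characteristic function of $p\Z_p$ and the transfer of uniform convergence to operator-norm convergence via contractivity of $\pi_A$ are exactly the intended steps.
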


 \emph{Remark:} It is also possible to formulate the above functional calculus for finite field extensions $K$ of $\Q_p$ (cf. \cite{claussnitzer}, Chapter 1.6), but we prefered working with $\Q_p$ for now.

\subsection{The matrix representation of operators}\label{matrix representation}

 Let $A$ be an operator in $\B(\Q_p(X))$. Associate the matrix $M_A:=(A_{ij})_{i, j\in X}$ to $A$ whose coefficients are given by $A_{ij}=(A(\delta_j))(i)$. Note that $A\in\B(\Q_p(X))$ is uniquely determined by $M_A$. Furthermore, for continuity reasons, we have $A(\xi)(i)=\sum_{j\in X}A_{ij}\xi(j)$ for all $\xi\in\Q_p(X)$.
 
First, we will state a lemma and second, we will characterize all matrices that can be written in the form $M_A$ for an operator $A\in \B(\Q_p(X))$. 

\begin{lemma}\label{adjoint matrix}
Let $A$ be in $\B(\Q_p(X))$, then we have $M_{A^*}=M_A^T$, where $M_A^T=(A_{ji})_{i, j\in X}$ is just the transposed matrix of $M_A$. 
\end{lemma}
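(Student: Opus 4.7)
The plan is to extract the matrix entries $(A^*)_{ji}$ by evaluating the defining identity $\langle A\xi, \eta\rangle = \langle \xi, A^*\eta\rangle$ on well-chosen test vectors. The obvious first attempt is to take $\xi = \delta_j$ and $\eta = \delta_i$: using the formula $(A\xi)(k) = \sum_m A_{km}\xi(m)$ recalled just before the lemma, the left side collapses to $\iota(A_{ij} + \Z_p)$ and the right side to $\iota((A^*)_{ji} + \Z_p)$. Since $\iota\colon \Q_p/\Z_p\hookrightarrow S^1$ is injective, this already yields $A_{ij} \equiv (A^*)_{ji} \pmod{\Z_p}$, but only modulo $\Z_p$, which is of course too weak.

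The fix is to plug in scaled delta vectors. For each $n\in\N$, the element $p^{-n}\delta_j$ lies in $\Q_p(X)$ (it has finite support, with a single coordinate of norm $p^n$). Applying the same computation with $\xi = p^{-n}\delta_j$, $\eta = \delta_i$, the matrix formula gives
\begin{align*}
\langle A(p^{-n}\delta_j),\delta_i\rangle &= \iota\bigl(p^{-n}A_{ij}+\Z_p\bigr),\\
\langle p^{-n}\delta_j, A^*\delta_i\rangle &= \iota\bigl(p^{-n}(A^*)_{ji}+\Z_p\bigr).
\end{align*}
By Lemma \ref{adjoint operator} these are equal, and injectivity of $\iota$ then yields $p^{-n}\bigl(A_{ij}-(A^*)_{ji}\bigr)\in\Z_p$, i.e.\ $A_{ij}-(A^*)_{ji}\in p^n\Z_p$. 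Taking the intersection over all $n\in\N$ gives $A_{ij}=(A^*)_{ji}$, which is precisely the claim $M_{A^*}=M_A^T$.

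The only mildly delicate point is the appeal to the formula $(A\xi)(k)=\sum_m A_{km}\xi(m)$ for the non-$\Z_p(X)$ vector $\xi = p^{-n}\delta_j$; this is exactly the statement made in the paragraph preceding the lemma and follows from $\tau$-continuity after splitting $\xi$ into a part in $\Z_p(X)$ (where approximation by finitely supported $\Z_p$-combinations of $\delta_j$'s together with $\Z_p$-linearity works) and the finite-support remainder. Beyond that, the argument is a direct unwinding of the definition of the pairing.
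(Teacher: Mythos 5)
Your proof is correct and takes essentially the same approach as the paper's: both evaluate the adjoint identity on scaled delta vectors and use the injectivity of $\iota$ to upgrade the congruence modulo $\Z_p$ to an actual equality of matrix entries. The only cosmetic difference is that the paper places an arbitrary scalar $\lambda\in\Q_p$ on the $\eta$-side (so it never needs the matrix formula for inputs outside $\Z_p(X)$), whereas you scale $\xi$ by $p^{-n}$ and intersect over all $n$ -- both work equally well.
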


\begin{proof} Let $\lambda$ be a number in $\Q_p$ and $i, j\in X$. Observe 
\begin{align*}
\iota(\lambda A_{ij}+\Z_p)=\langle A\delta_j, \lambda\delta_i\rangle=\langle\lambda\delta_j, A^*\delta_i\rangle=\iota(\lambda A^*_{ji}+\Z_p). 
\end{align*}
%where we used Lemma \ref{restricted linearity}. 
This can only hold for every $\lambda\in\Q_p$ if $A^*_{ji}=A_{ij}$ for all $i, j\in X$. Therefore, $M_{A^*}$ is exactly the transpose of $M_A$. 
\end{proof}

\begin{theorem}\label{matrix representation of the operators}
A necessary and sufficient condition for a matrix $M=(a_{ij})_{i, j\in X}$ to be of the form $M=M_A$ for an operator $A\in \B(\mathbb Q_p(X))$ is that \emph{(a)} $M$ admits only finitely many entries in $\mathbb Q_p\setminus\mathbb Z_p$ and that \emph{(b)} for $k\in X$ one always has $\lim_{i\rightarrow\infty}a_{ik}=0$ and $\lim_{j\rightarrow\infty}a_{kj}=0$. 
\end{theorem}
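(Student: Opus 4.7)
My plan is to prove both implications separately. For the necessity direction, suppose $A \in \B(\Q_p(X))$ with matrix $M = M_A$. Fix any enumeration of $X$. The sequence $(\delta_n)_n$ converges to $0$ in $\tau$ by Lemma \ref{convergence in our Hilbert space} (entrywise convergence plus the set of coordinates where some entry exceeds $\Z_p$ is empty). Continuity of $A$ forces $A\delta_n \to 0$ in $\tau$, hence entrywise, giving $a_{kn} \to 0$ as $n \to \infty$ for each fixed $k$, which is one half of (b). Applying the same argument to $A^*$, whose matrix is $M_A^T$ by Lemma \ref{adjoint matrix}, yields $a_{nk} \to 0$, the other half. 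For condition (a), I exploit the $\tau$-compactness of $\Z_p(X)$: its image $A(\Z_p(X))$ is $\tau$-compact, so by Lemma \ref{compactness lemma} there exists a finite $S \subseteq X$ with $A(\Z_p(X)) \subseteq \prod_{i \in S}\Q_p \times \prod_{i \notin S}\Z_p$. Since each $\delta_j \in \Z_p(X)$, this forces $a_{ij} \in \Z_p$ whenever $i \notin S$. For each of the finitely many rows indexed by $i \in S$, condition (b) gives $a_{ij} \to 0$ as $j \to \infty$, so only finitely many entries in those rows escape $\Z_p$.

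For sufficiency, given $M=(a_{ij})$ satisfying (a) and (b), I define $A\xi(i) := \sum_{j \in X} a_{ij}\xi(j)$. Convergence of this series, for fixed $i$ and $\xi \in \Q_p(X)$, follows from the ultrametric estimate $|a_{ij}\xi(j)|_p \leq C \cdot |a_{ij}|_p$, where $C$ is an upper bound for $|\xi(j)|_p$, combined with $a_{ij} \to 0$. To prove $A\xi \in \Q_p(X)$, I combine three finite sets: the set $F_1 = \{i : |a_{ij}|_p > 1 \text{ for some } j\}$, finite by (a); the set $S = \{j : |\xi(j)|_p > 1\}$, finite since $\xi \in \Q_p(X)$; and, for each $j \in S$, the finite set of indices $i$ with $|a_{ij}|_p > |\xi(j)|_p^{-1}$, finite by (b). Outside the union of these, the ultrametric inequality together with $|a_{ij}|_p \leq 1$ (for $i \notin F_1$) and $|\xi(j)|_p \leq 1$ (for $j \notin S$) yields $|A\xi(i)|_p \leq 1$, as required. $\Z_p$-linearity of $A$ is immediate.

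For $\tau$-continuity I invoke the characterization in Lemma \ref{closed graph theorem}(c): suppose $\tau\text{-}\lim \xi_n = 0$ and $\tau\text{-}\lim A\xi_n = \eta$; I must show $\eta = 0$. By Lemma \ref{convergence in our Hilbert space}, the set $Q = \{x \in X : \exists n,\ |\xi_n(x)|_p > 1\}$ is finite, and $\xi_n \to 0$ entrywise. Fixing $i$, split
\begin{align*}
A\xi_n(i) = \sum_{j \in Q} a_{ij}\xi_n(j) + \sum_{j \notin Q} a_{ij}\xi_n(j).
\end{align*}
The first, finite, sum tends to $0$ since each $\xi_n(j) \to 0$. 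For the tail, given $\varepsilon > 0$, the decay $a_{ij} \to 0$ from (b) furnishes a finite $J \subseteq X$ with $|a_{ij}|_p < \varepsilon$ for $j \notin J$. The contribution from $j \notin Q \cup J$ is bounded by $\varepsilon$ in the ultrametric (using $|\xi_n(j)|_p \leq 1$ for $j \notin Q$), while the remaining contribution from the finite set $J \setminus Q$ tends to $0$ entrywise. Hence $A\xi_n(i) \to 0$, so $\eta(i) = 0$ for every $i$.

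The main obstacle I anticipate is the careful index bookkeeping in the sufficiency direction, particularly the verification $A\xi \in \Q_p(X)$: one must simultaneously track three sources of ``bad'' coordinates (the off-$\Z_p$ matrix entries, the off-$\Z_p$ coordinates of $\xi$, and the finite failure of column-decay), and the ultrametric inequality is what makes the estimate clean. The remaining steps are straightforward applications of the closed graph theorem (Lemma \ref{closed graph theorem}) and the compactness characterization (Lemma \ref{compactness lemma}).
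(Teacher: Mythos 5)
Your proof is correct, and it diverges from the paper's in an interesting way on the necessity direction. For condition (a), the paper argues by contraposition: assuming infinitely many entries lie in $\Q_p\setminus\Z_p$, it constructs an infinite set $Y\subseteq X$ whose columns have pairwise disjoint ``bad'' row sets and shows that $A\chi_{Y_n}$ cannot converge although $\chi_{Y_n}\rightarrow\chi_Y$ in $\tau$. You instead apply Lemma \ref{compactness lemma} to the $\tau$-compact image $A(\Z_p(X))$ to confine all off-$\Z_p$ entries to finitely many rows, then invoke row decay to finish; this is arguably cleaner and reuses the mechanism already present in the proof of Lemma \ref{continuity in different topologies}. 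For condition (b) the two arguments are near-mirror images: the paper derives \emph{column} decay directly from the requirement that $A(\lambda\delta_x)$ land in $\Q_p(X)$ and transfers it to rows via $M_{A^*}=M_A^T$, whereas you derive \emph{row} decay directly from $\tau$-continuity applied to $\delta_n\rightarrow 0$ and transfer it to columns via the adjoint. Finally, for sufficiency the paper only asserts that one ``can easily verify'' that the formula $(A\xi)(i)=\sum_j a_{ij}\xi(j)$ defines an element of $\B(\Q_p(X))$; you supply the actual bookkeeping (the three finite exceptional sets for well-definedness, and the closed-graph criterion of Lemma \ref{closed graph theorem}(c) for continuity), which is exactly the content being suppressed. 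All steps check out; the only cosmetic remark is that the phrase ``$\lim_{i\rightarrow\infty}$'' presupposes a fixed enumeration of the countable set $X$, which you rightly make explicit.
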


\begin{proof} To see why (a) is necessary, suppose that $M$ has infinitely many entries in $\mathbb Q_p\setminus\mathbb Z_p$. As in each row and in each column there are clearly only finitely many entries in $\mathbb Q_p\setminus \mathbb Z_p$, it is possible to choose an infinite subset $Y\subseteq X$ such that for each $y\in Y$ one has $\{i\in X; a_{iy}\in\mathbb Q_p\setminus\mathbb Z_p\}\neq\varnothing$ and $\{i\in X; a_{iy}\in\mathbb Q_p\setminus\mathbb Z_p\}\cap\{j\in X; a_{jz}\in\mathbb Q_p\setminus\mathbb Z_p\}=\varnothing$ for $y, z\in Y, y\neq z$. Consider the element $\chi_Y\in\mathbb Q_p(X)$, the characteristic function of the set $Y$. One has $\chi_Y=\lim_{n\rightarrow\infty}\chi_{Y_n}$ (convergence with respect to $\tau$) where $(Y_n)_{n\in\mathbb N}$ is an increasing sequence of finite subsets of $Y$ with the property that $Y=\bigcup_n Y_n$. If there existed $A\in \B(\mathbb Q_p(X))$ such that $M=M_A$, the sequence $(A\chi_{Y_n})_n$ would by continuity converge in $\mathbb Q_p(X)$. The choice of the set $Y$ shows that this is not the case. Therefore, condition (a) is necessary for the existence of such an operator $A$.

 On the other hand, suppose that there is an element $x\in X$ and $\varepsilon>0$ such that $\{j\in X; |a_{jx}|>\varepsilon\}$ is infinite. For $\lambda\in\mathbb Q_p$ with $\varepsilon|\lambda|>1$, the element $\lambda\chi_{\{x\}}$ lies in $\mathbb Q_p(X)$, but as $(\lambda a_{ix})_{i\in X}$ does not lie in $\mathbb Q_p(X)$, the matrix $M$ is not of the form $M=M_A$ for $A\in\B(\mathbb Q_p(X))$. The same holds for the case that $\{j\in X; |a_{xj}|>\varepsilon\}$ is infinite (considering the adjoint matrix $M^*$ and using the lemma above). Therefore, condition (b) is equally necessary for the existence of such an $A\in\B(\mathbb Q_p(X))$.
 
 In order to prove that (a) and (b) are sufficient for the existence of $A$, define $A$, being given a matrix $M$ such that (a) and (b) hold, by $(A\xi)_i=\sum_{j\in X}a_{ij}\xi_j$ where $\xi=(\xi_j)_{j\in X}\in \mathbb Q_p(X), i\in X$. One can easily verify that $A$ lies indeed in $\B(\mathbb Q_p(X))$ and that $M=M_A$. 
\end{proof}

 The following lemma is easy to prove: 

\begin{lemma}\label{norm of a matrix}
Let $A$ be in $\B(\Q_p(X))$, then we have $\|A\|=\max\{|A_{ij}|_p; i, j\in X\}$. 
\end{lemma}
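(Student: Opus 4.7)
The plan is to establish both inequalities for the claim $\|A\|=\max\{|A_{ij}|_p; i, j\in X\}$, after first noting that the maximum on the right is actually finite and attained: by part (a) of Theorem \ref{matrix representation of the operators}, only finitely many entries of $M_A$ lie in $\Q_p\setminus\Z_p$, and all other entries have $p$-adic absolute value at most $1$, so the supremum of $|A_{ij}|_p$ is a maximum.

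For the inequality $\max_{i,j}|A_{ij}|_p\leq\|A\|$, I would fix $i,j\in X$ and test $A$ on $\delta_j$. Since $\|\delta_j\|=1$, the definition of the operator norm gives $\|A\delta_j\|\leq\|A\|$. But $A\delta_j$ has coordinates $(A\delta_j)(i)=A_{ij}$, so $\|A\delta_j\|=\max_{i\in X}|A_{ij}|_p\geq|A_{ij}|_p$. Taking the max over $i,j$ yields the desired inequality.

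For the reverse inequality, let $M:=\max_{i,j}|A_{ij}|_p$ and take any $\xi\in\Q_p(X)$ with $\|\xi\|\leq 1$. By the remark preceding Lemma \ref{adjoint matrix}, $(A\xi)(i)=\sum_{j\in X}A_{ij}\xi(j)$, and applying the ultrametric inequality (valid since the sum is a limit of finite sums, using convergence in $\Q_p$) together with $|\xi(j)|_p\leq 1$ gives $|(A\xi)(i)|_p\leq\max_{j\in X}|A_{ij}|_p|\xi(j)|_p\leq M$. Taking the max over $i$ yields $\|A\xi\|\leq M$, so $\|A\|\leq M$.

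The only subtle step is the application of the ultrametric inequality to the infinite sum $\sum_j A_{ij}\xi(j)$, which needs the fact that this series converges in $\Q_p$ (so that its absolute value is bounded by the supremum of the absolute values of its terms); this is guaranteed because the ultrametric makes the partial sums Cauchy as soon as the terms tend to zero, and the terms do tend to zero by condition (b) of Theorem \ref{matrix representation of the operators} combined with $\xi\in\Q_p(X)$. Everything else is routine.
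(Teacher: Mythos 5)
Your argument is correct and is exactly the standard one the paper has in mind (the lemma is stated without proof as ``easy to prove''): the lower bound by testing on the vectors $\delta_j$, the upper bound via the ultrametric inequality applied to the convergent series $(A\xi)(i)=\sum_j A_{ij}\xi(j)$, with the convergence justified by condition (b) of Theorem \ref{matrix representation of the operators}. No gaps; the attention to why the series converges and why the supremum is attained is appropriate.
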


%\begin{proof} As the operator acts on an element $\xi=(\xi_k)_{k\in X}\in\Q_p(X)$ by matrix multiplication on the left with the matrix $M_A$, we get, using the strong triangle inequality, the relation 
%\begin{align*}
%\|A\xi\|=\max_{k\in X}\left|\sum_{l\in X}A_{kl}\xi_l\right|_p\leq\max_{k\in X}\max_{l\in X}|A_{kl}\xi_l|_p\leq\max\{|A_{ij}|_p; i, j\in X\}\|\xi\|, 
%\end{align*}
%i.\,e. $\|A\|\leq\max\{|A_{ij}|_p; i, j\in X\}$. \\
% On the other hand, we know that there are $k, l\in X$ such that 
%\begin{align*}
%|A_{kl}|_p=\max\{|A_{ij}|_p; i, j\in X\} 
%\end{align*}
%(because the $p$-adic norm can admit as values only powers of $p$ or $0$). Now, we observe $\|A\delta_l\|=|A_{kl}|_p=\max\{|A_{ij}|_p; i, j\in X\}=\max\{|A_{ij}|_p; i, j\in X\}\|\delta_l\|$ and therefore $\|A\|=\max\{|A_{ij}|_p; i, j\in X\}$. 
%\end{proof}

 Using Theorem \ref{matrix representation of the operators} and Lemma \ref{norm of a matrix}, Lemma \ref{completeness of the operators}, i.\,e. the completeness of $\B(\Q_p(X))$ with respect to the norm becomes obvious. 

 To finish the section, we want to give a link of our topic to Willis' notion of the scale of an operator. Recall that, for an endomorphism $\alpha$ on a totally disconnected locally compact group $G$ (i.\,e. a continuous group homomorphism $G\rightarrow G$), the \emph{scale} $s(\alpha)$ is defined as the minimum of all possible values $[\alpha(U):(U\cap\alpha(U))]$ for compact open subgroups $U$ of $G$ (the group $G$ always has a base of neighborhoods of the identity that consists only of compact open subgroups, cf. Theorem 2.1 in \cite{willis}). 
 
 For an arbitrary compact open subgroup $U$ of $G$, the scale can be calculated as $s(\alpha)=\lim_{n\rightarrow\infty}[\alpha^n(U):(U\cap\alpha^n(U))]^{1/n}$, cf. Proposition 8.3 in \cite{willis}. 
 
 Also for operators in $\mathcal B(\mathbb Q_p(X))$, we can ask how to calculate their scale. If $X$ is finite, then we have $\mathbb Q_p(X)=\mathbb Q_p^n$ (for an appropriate $n\in\mathbb N$) and $s(\alpha)$ is the norm of the product of all eigenvalues of $\alpha$ with norm greater than $1$ (in a finite field extension of $\Q_p$, where the characteristic polynomial of $\alpha$ decomposes in linear factors; use the Frobenius normal form to show this), i.\,e.
 $$s(\alpha)= \sup_n \left\|\bigwedge \!\! {}^n\alpha \right\|.$$ However, it seems to be a more difficult question how to determine the scale of an operator in $\mathcal B(\mathbb Q_p(X))$ for infinite $X$. It seems reasonable to expect that the scale of a general operator is the limit of the scales of the finite minors in its matrix representation and that a similar formula as above holds -- but we were unable to show this.
 
%At least, the following can be shown: Let, for $A\in\B(\Q_p(X))$ and any finite subset $H\subseteq X$, the term $M_A(H)\in\Q_p^{|H|\times|H|}$ denote the submatrix of $M_A$ consisting of all entries $A_{ij}$ with $i, j\in H$. Then, the equation 
%\begin{align*}
%s(A)=\max_{H\subseteq X\text{ finite}}|\det M_A(H)|_p 
%\end{align*}
% does \emph{not} always hold as one could first conjecture (take for example the operator represented by the $2\times2$-matrix $\begin{bmatrix} p^{-1} & p^{-2}+1 \\ -1 & -p^{-1} \end{bmatrix}$ for $|X|=2$). 

 For every operator $A\in \B(\Q_p(X))$, we have $s(A^*)=s(A)$. Even a more general statement can be proved: Let $G$ be a totally disconnected locally compact \emph{abelian} group and $A$ an endomorphism on $G$; then, the adjoint endomorphism $A^*$ acting on the Pontryagin dual $G'$ of $G$ has the same scale as $A$.

%\noindent Note that the latter expression is at least $1$ because $\det M_A(H)=1$ for $H=\varnothing$. 

\section{Various operator algebras and their $K$-theory}
\subsection{Compact operators in $\B(\mathbb Q_p(X))$}\label{compact operators in our context}

 It is interesting to see that also the ideal of compact operators of usual Archimedean functional analysis have a natural analogy in our context: 

\begin{definition}\label{compact operators}
Define $\K(\mathbb Q_p(X))$ to be the set of all operators in $\B(\mathbb Q_p(X))$ that map norm-bounded sets onto relatively $\tau$-compact sets in $\Q_p(X)$. We want to call the elements of $\K(\mathbb Q_p(X))$ the \emph{compact operators} on $\mathbb Q_p(X)$. 
\end{definition}

 In the rest of this section, we will always assume $X=\N$ (without any restriction of generality). 

\begin{lemma}\label{compact operators and the norm-topology}
For an operator $A\in \B(\Q_p(\N))$, the following three statements are equivalent: 
\begin{itemize}
 \item[\emph{(a)}] $A$ is a compact operator, 
 \item[\emph{(b)}] the matrix-entries of $A$ converge to zero, 
 \item[\emph{(c)}] it maps norm-bounded sets onto relatively norm-compact sets in $\Q_p(\N)$. 
\end{itemize}
 The operators with this property form a self-adjoint ideal in $\B(\Q_p(\N))$, i.\,e. an ideal that is closed under the adjoint operation. 
\end{lemma}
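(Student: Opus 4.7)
My plan is to prove the three conditions equivalent via the cycle (a)$\Rightarrow$(b)$\Rightarrow$(c)$\Rightarrow$(a), and then to deduce the ideal properties from the cleanest of these characterizations. For (a)$\Rightarrow$(b), I will exploit the description of $\tau$-compact sets from Lemma \ref{compactness lemma}: any relatively $\tau$-compact subset of $\Q_p(\N)$ must sit inside some $\prod_{i\in S}\Q_p\times\prod_{j\in\N\setminus S}\Z_p$ with $S\subseteq\N$ finite. Applied to the image of the norm-ball $\{\xi\in\Q_p(\N):\|\xi\|\leq p^k\}$, which contains every vector $p^{-k}\delta_j$, this will produce a finite set $S_k\subseteq\N$ such that $|A_{ij}|_p\leq p^{-k}$ for every $i\notin S_k$ and every $j$. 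Combined with the row- and column-decay built into Theorem \ref{matrix representation of the operators}(b), this forces $\{(i,j):|A_{ij}|_p\geq\varepsilon\}$ to be finite for every $\varepsilon>0$, which is exactly condition (b).

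For (b)$\Rightarrow$(c), I will approximate $A$ by its finite truncations $A_n$, whose matrix keeps the entries $A_{ij}$ with $i,j\leq n$ and sets the rest to zero. Each $A_n$ takes values in the finite-coordinate subspace spanned by $\delta_1,\ldots,\delta_n$, and norm-bounded sets in such a subspace are norm-compact by Lemma \ref{compactness lemma}. Condition (b) together with Lemma \ref{norm of a matrix} yields $\|A-A_n\|\to 0$, and then a standard totally-bounded-approximation argument in the ultra-metric, combined with the completeness of $\Q_p(\N)$ from Lemma \ref{norm-completeness of our space}, shows that the norm-closure of $A(K)$ is norm-compact for every norm-bounded $K$. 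The implication (c)$\Rightarrow$(a) is then immediate: the norm topology is finer than $\tau$, so norm-compact sets are $\tau$-compact and hence relatively norm-compact sets are relatively $\tau$-compact.

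Finally, the ideal statements are easiest via (b) and (c). Closure under addition and $\Z_p$-scalar multiplication is immediate from the ultrametric inequality on the matrix entries. Self-adjointness follows from Lemma \ref{adjoint matrix}, since transposition preserves the property in (b). For absorption under left and right multiplication by an arbitrary $T\in\B(\Q_p(\N))$, I will use (c): the image $T(K)$ of a norm-bounded set $K$ is again norm-bounded, so $A(T(K))$ is relatively norm-compact, while norm-continuity of $T$ (Lemma \ref{continuity in different topologies}) sends the relatively norm-compact set $A(K)$ to a relatively norm-compact set. The main obstacle is the step (a)$\Rightarrow$(b): one has to scale the test vectors $\delta_j$ by the correct powers $p^{-k}$ to extract the full rate of decay of the matrix entries from the compactness hypothesis, rather than settling for a single uniform bound.
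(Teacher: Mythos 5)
Your proof is correct, and the cycle (a)$\Rightarrow$(b)$\Rightarrow$(c)$\Rightarrow$(a) is the same organization the paper uses; the step (a)$\Rightarrow$(b) is in fact essentially identical (the paper tests compactness on the set $\{p^{-N}\delta_n\colon n\in\N\}$, which is just your scaled ball restricted to the standard vectors, and then invokes the row-decay from Theorem \ref{matrix representation of the operators} for the finitely many exceptional rows exactly as you do). Where you genuinely diverge is (b)$\Rightarrow$(c): the paper does not truncate $A$ at all, but instead bounds $|(A\xi)(n)|\leq p^{-a_n}$ with $p^{-a_n}\to 0$ and observes that the image of a norm-ball is contained in the product $\prod_n B_{p^{-a_n}}$, which is compact by Tychonoff because the norm topology on that product coincides with the product topology. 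Your route --- finite-rank truncations $A_n$ with $\|A-A_n\|\to 0$ via Lemma \ref{norm of a matrix}, followed by total boundedness plus completeness --- is equally valid and has the advantage of making explicit that $\K(\Q_p(\N))$ is the norm-closure of the finite-rank operators, a fact the paper only records later (before Corollary \ref{compact operators and K-groups}); the paper's argument is slightly more self-contained in that it needs no completeness of $\Q_p(\N)$, only Tychonoff. For the ideal statement the paper merely gestures at the matrix representation, whereas your argument via (c) --- using that a bounded operator sends norm-bounded sets to norm-bounded sets and compact sets to compact sets --- is cleaner and fully rigorous; the self-adjointness via Lemma \ref{adjoint matrix} and condition (b) matches the paper's intent.
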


\begin{proof} (a)$\Rightarrow$(b): Consider $N\in\N$. If $A$ is compact, then the image of $M:=\{p^{-N}\delta_n; n\in\N\}$ (as a norm-bounded set) must be relatively $\tau$-compact. According to Lemma \ref{compactness lemma}, the entries of the elements of $A(M)$ have always to be in $\Z_p$ for sufficiently high indices. But the entries of $A(p^{-N}\delta_n)$ are exactly the matrix entries of the $n$-th column of $A$, multiplied by $p^{-N}$. This shows that the matrix entries of $A$ must have norm at most $p^{-N}$ for sufficiently high row-numbers. But in the (only finitely many) rows where entry-norms greater than $p^{-N}$ occur, $A$ can only have finitely many entries with norm greater than $p^{-N}$ because the row entries converge to zero in each row (cf. Theorem \ref{matrix representation of the operators}). Therefore, $A$ has only finitely many matrix entries of norm greater than $p^{-N}$.
 As $N\in\N$ is arbitrary, the matrix entries of $A$ must converge to zero.
 
(b)$\Rightarrow$(c): Suppose that the matrix entries of $A$ converge to zero. To prove (c), it is sufficient to show that all sets of the form $M_N:=A(\{\xi\in\Q_p(\N);\|\xi\|\leq p^N\}), N\in\N$, are relatively compact in $\Q_p(\N)$. Suppose $N\in \N$. There exists, for each $k\in\N$, a number $m_k\in\N$ such that all matrix entries of $A$ in a row with number $n\geq m_k$ have norm less than $p^{-N}p^{-k}$. We now obtain $|(A\xi)(n)|\leq p^{-k}$ for $\xi\in\Q_p(\N)$ with $\|\xi\|\leq p^N$ and $n\geq m_k$. Therefore, we can construct a sequence $(p^{-a_l})_{l\in\N}$ ($a_l\in\Z$ for $l\in\N$) with $p^{-a_l}\xrightarrow{l\rightarrow\infty}0$ (in $\R$) such that $|(A\xi)(n)|\leq p^{-a_n}$ for all $n\in\N$ and $\xi\in\Q_p(\N), \|\xi\|\leq p^N$. We see that 
\begin{align*}
M_N\subseteq Q:=\prod_{l\in\N}B_{p^{-a_l}} 
\end{align*}
where $B_\varepsilon:=\{\lambda\in\Q_p; |\lambda|\leq\varepsilon\}, \varepsilon>0$. To prove (c), it is sufficient to show the norm-compactness of $Q$. But this is a consequence of the Tychonoff theorem: Notice that the norm-topology on $Q$ coincides exactly with the product topology because we assumed $p^{-a_l}\xrightarrow{l\rightarrow\infty}0$ (in $\R$).

(c)$\Rightarrow$(a): This is clear since every relatively norm-compact set in $\Q_p(\N)$ is also relatively $\tau$-compact (note that a norm-convergent sequence in $\Q_p(\N)$ is also $\tau$-convergent).

The fact that the compact operators form a self-adjoint ideal in $\B(\Q_p(\N))$ follows easily if one uses the matrix representation for compact operators. 
\end{proof}

\subsection{Some results on idempotents in $\B(\Q_p(X))$}\label{some results on idempotents}

%\chapter{Idempotents in $\B(\Q_p(X))$ and \emph{K}-theoretic aspects}

 In the following sections, we want to analyze properties of idempotents in $\B(\Q_p(X))$ and calculate the $K_0$-groups of $\K(\Q_p(X))$ and $\B(\Q_p(X))$. As a good introduction into $K$-theory, we recommend \cite{rosenberg}.
 
The $K$-theory of nonarchimedean Banach rings (i.\,e. complete normed rings whose norm satisfies submultiplicativity and the strong triangle inequality) has been investigated by Adina Calvo in her thesis \cite{calvo}.
 
We want to collect first information on the idempotents in $\B(\Q_p(X))$. If $A\in \B(\Q_p(X))$ is an \emph{idempotent}, i.\,e. it satisfies the equation $A^2=A$, then the operators $1-A$, $A^*$ and $1-A^*$ are idempotents as well. Note that we have $\operatorname{ker}(1-A)=\operatorname{im}(A)$ and that similar equations hold for $A^*$, $1-A$ and $1-A^*$ instead of $A$. A self-adjoint idempotent will be called a \emph{projection}. The following lemma is easy to prove: 

\begin{lemma}\label{image and kernel of an idempotent}
For an idempotent $A\in \B(\Q_p(X))$, we have the following identities: 
\begin{align*}
\operatorname{im}(A)^\perp=\operatorname{im}(1-A^*)\ \text{ and }\ \operatorname{im}(A)=\operatorname{im}(1-A^*)^\perp. 
\end{align*}
\end{lemma}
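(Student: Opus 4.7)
The plan is to deduce both identities from the earlier adjoint lemma (Lemma \ref{adjoint operator and its image}) together with the elementary properties of $\perp$ collected in Lemma \ref{properties of the complement}, using the key observation that for any idempotent the image equals the kernel of its complementary idempotent.

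First I would record the purely algebraic fact that for any idempotent $B$ one has $\ker(B)=\operatorname{im}(1-B)$: the inclusion $\supseteq$ follows from $B(1-B)=0$, and if $B\xi=0$ then $\xi=(1-B)\xi\in\operatorname{im}(1-B)$. Applied to the idempotent $A^*$, this gives
\begin{align*}
\ker(A^*)=\operatorname{im}(1-A^*).
\end{align*}
On the other hand, Lemma \ref{adjoint operator and its image} applied to $A^*$ (using $A^{**}=A$ from Lemma \ref{adjoint operator}) gives $\ker(A^*)=\operatorname{im}(A)^\perp$. Combining these two equalities yields the first identity
\begin{align*}
\operatorname{im}(A)^\perp=\operatorname{im}(1-A^*).
\end{align*}

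For the second identity, I would take $\perp$ on both sides of the first identity, obtaining $\operatorname{im}(A)^{\perp\perp}=\operatorname{im}(1-A^*)^\perp$. It then remains to identify the left-hand side with $\operatorname{im}(A)$ itself. By Lemma \ref{properties of the complement}(a), this holds as soon as $\operatorname{im}(A)$ is a closed sub-$\Z_p$-module of $\Q_p(X)$. But since $A$ is idempotent, $\operatorname{im}(A)=\ker(1-A)$, which is closed as the kernel of the $\tau$-continuous group homomorphism $1-A\in\B(\Q_p(X))$. Hence
\begin{align*}
\operatorname{im}(A)=\operatorname{im}(A)^{\perp\perp}=\operatorname{im}(1-A^*)^\perp.
\end{align*}

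I do not expect a real obstacle here: the only subtle point is that invoking Lemma \ref{properties of the complement}(a) requires $\operatorname{im}(A)$ to be closed, which is immediate from the idempotent identity $\operatorname{im}(A)=\ker(1-A)$. Everything else is a direct combination of the algebraic idempotent identity with the already established adjoint-kernel formula and Pontryagin biduality of closed subgroups.
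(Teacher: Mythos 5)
Your proof is correct, and it follows exactly the route the paper intends for this lemma (which it leaves as ``easy to prove''): the surrounding remarks already record the two ingredients you use, namely $\operatorname{im}(A)=\ker(1-A)$ is closed and the adjoint--kernel formula of Lemma \ref{adjoint operator and its image}, combined with biduality from Lemma \ref{properties of the complement}(a). No gaps.
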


 Note that $\operatorname{im}A=\operatorname{ker}(1-A)$ is closed. Combining the preceding lemma with Lemma \ref{lemma on Pontryagin duality}, we obtain the following: 

\begin{lemma}\label{dual of the image of an idempotent}
If $A\in \B(\Q_p(X))$ is an idempotent, then the Pontryagin dual of $\operatorname{im}(A)$ is isomorphic to $\operatorname{im}(A^*)$. 
\end{lemma}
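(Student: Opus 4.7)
The plan is to combine the identity $\operatorname{im}(A)^\perp = \operatorname{im}(1-A^*)$ from Lemma \ref{image and kernel of an idempotent} with the Pontryagin duality description of Lemma \ref{lemma on Pontryagin duality}, and then to exhibit a direct-sum decomposition of $\Q_p(X)$ coming from the idempotent $A^*$.

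More concretely, first I would apply Lemma \ref{lemma on Pontryagin duality} to the closed subgroup $H := \operatorname{im}(A) = \operatorname{ker}(1-A)$ to obtain
\begin{align*}
\widehat{\operatorname{im}(A)} \;\cong\; \Q_p(X)/\operatorname{im}(A)^\perp.
\end{align*}
Inserting the equality $\operatorname{im}(A)^\perp = \operatorname{im}(1-A^*)$ from Lemma \ref{image and kernel of an idempotent} reduces the claim to exhibiting a topological isomorphism
\begin{align*}
\Q_p(X)/\operatorname{im}(1-A^*) \;\cong\; \operatorname{im}(A^*).
\end{align*}

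Next, since $A^*$ is again an idempotent in $\B(\Q_p(X))$, I would check the purely algebraic splitting $\Q_p(X) = \operatorname{im}(A^*) \oplus \operatorname{im}(1-A^*)$: every $\xi$ decomposes as $\xi = A^*\xi + (1-A^*)\xi$, and if $\eta = A^*\zeta = (1-A^*)\zeta'$, then applying $A^*$ gives $\eta = A^*\eta = A^*(1-A^*)\zeta' = 0$. This decomposition yields a continuous $\Z_p$-linear group homomorphism
\begin{align*}
\Phi\colon \operatorname{im}(A^*) \hookrightarrow \Q_p(X) \twoheadrightarrow \Q_p(X)/\operatorname{im}(1-A^*)
\end{align*}
that is a bijection.

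Finally, the topological issue: both groups involved are Polish. Indeed, $\operatorname{im}(A^*) = \ker(1-A^*)$ is a closed subgroup of the Polish group $\Q_p(X)$ (cf.\ Lemma \ref{Polish group}), and the quotient of a Polish group by a closed subgroup is again Polish. A continuous bijective group homomorphism between Polish groups has continuous inverse by Theorem 1.5 in \cite{hofmannmorris} (used already in the proof of Lemma \ref{lemma on Pontryagin duality}), so $\Phi$ is an isomorphism of topological groups, which together with the first step yields the claim.

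The main obstacle I anticipate is only the topological upgrade of the algebraic isomorphism $\operatorname{im}(A^*) \cong \Q_p(X)/\operatorname{im}(1-A^*)$; the algebraic decomposition is immediate, but one must invoke the open mapping theorem for Polish groups to avoid any subtlety arising from the fact that the splitting need not be a topological direct sum a priori. Once that is in place, everything else is a direct chain of identifications.
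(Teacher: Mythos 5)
Your proposal is correct and follows exactly the route the paper intends: combine Lemma \ref{lemma on Pontryagin duality} applied to $\operatorname{im}(A)=\ker(1-A)$ with the identity $\operatorname{im}(A)^\perp=\operatorname{im}(1-A^*)$ from Lemma \ref{image and kernel of an idempotent}, then identify $\Q_p(X)/\operatorname{im}(1-A^*)$ with $\operatorname{im}(A^*)$ via the idempotent splitting. Your appeal to the open mapping theorem for the last step is fine (and matches the technique in the proof of Lemma \ref{lemma on Pontryagin duality}), though one can avoid it by noting that $\xi\mapsto A^*\xi$ induces a continuous inverse of $\Phi$ directly from the universal property of the quotient topology.
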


It would be interesting to know if one can define the usual operations (like supremum and infimum) on the set of idempotents (or projections) in our context.
 
Our first conjecture in this direction was that for a sequence $(e_n)_{n\in\N}$ of idempotents in $\B(\Q_p(\N))$ with 
\begin{align*}
\forall n\in\N\colon &e_{n+1}\Q_p(\N)\subseteq e_n\Q_p(\N), \\
&(1-e_n)\Q_p(\N)\subseteq(1-e_{n+1})\Q_p(\N), \\
&\|e_n\|\leq1, 
\end{align*}
there always exists an idempotent $e\in \B(\Q_p(\N))$ such that 
\begin{align*}
e\Q_p(\N)&=\bigcap_{n\in\N}e_n\Q_p(\N), \\
(1-e)\Q_p(\N)&=\tau\text{-cl}\left(\bigcup_{n\in\N}(1-e_n)\Q_p(\N)\right). 
\end{align*}
This conjecture, however, turns out to be false (even in the case that the $e_n$ are required to be projections). Counter-examples can be found in \cite{claussnitzer}, Section 3.1. \\
 Second, we would like to know if for two projections $e,f\in \B(\Q_p(\N))$, there is always a projection (or at least an idempotent) $g\in \B(\Q_p(\N))$ such that $\operatorname{im}g=\operatorname{im}e\cap\operatorname{im}f$. Unfortunately, also this conjecture is false (cf. Section 3.1 in \cite{claussnitzer} for a counter-example). \\
% Let $e$ be given by the matrix 
%\begin{align*}
%\begin{bmatrix}
%J  &  0  &  0  &  \dots  \\
%0  &  J  &  0  &  \dots   \\
%0  &  0  & J   &  \dots   \\
%\vdots	        & \vdots      & \vdots     & \ddots  \\
%\end{bmatrix} 
%\end{align*}
%where 
%\begin{align*}
%J:=\begin{bmatrix}
%(p+1)^{-1}	& (p+1)^{-1} & \dots  & (p+1)^{-1} \\
%(p+1)^{-1}	& (p+1)^{-1} & \dots  & (p+1)^{-1} \\
%\vdots	        & \vdots      & \ddots & \vdots      \\
%(p+1)^{-1} 	& (p+1)^{-1} & \dots  & (p+1)^{-1}
%\end{bmatrix}\in\Z_p^{(p+1)\times (p+1)}. 
%\end{align*}
%Furthermore, let $f$ be given by the matrix 
%\begin{align*}
%\begin{bmatrix}
%I  &  0  &  0  &  0  &  \dots  \\
%0  &  J  &  0  &  0  &  \dots  \\
%0  &  0  &  J  &  0  &  \dots  \\
%0  &  0  &  0  & J   &  \dots  \\
%\vdots	        & \vdots& \vdots      & \vdots     & \ddots  \\
%\end{bmatrix} 
%\end{align*}
%where the matrix $I:=(1)$ has only one entry and the symbol $0$ denotes appropriate zero-matrices. \\
% As above, the intersection $\operatorname{im}e\cap\operatorname{im}f$ consists exactly of those elements of $\Z_p(\N)$ whose entries are all equal and can therefore not be the image of a %projection (nor even the image of an idempotent) in $\B(\Q_p(\N))$. 

\begin{theorem}\label{intersection counter-example}
There is a decreasing sequence of contractive projections $(e_n)_{n\in\N}$ in $\B(\Q_p(X))$ that is decreasing such that $\bigcap_{n\in\N}e_n\Q_p(\N)$ is not the image of an idempotent in $\B(\Q_p(X))$. There are contractive projections $e, f\in \B(\Q_p(\N))$ such that $\operatorname{im}e\cap\operatorname{im}f$ is not the image of an idempotent in $\B(\Q_p(\N))$. 
\end{theorem}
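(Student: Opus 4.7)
The plan is to exhibit a closed $\Z_p$-submodule $M$ of $\Q_p(\N)$ which is not the image of any idempotent of $\B(\Q_p(\N))$, but which can be written both as $\operatorname{im}(e)\cap\operatorname{im}(f)$ for two contractive projections and as $\bigcap_n\operatorname{im}(e_n)$ for a decreasing sequence of such. The candidate is $M:=\Z_p\cdot\mathbf{1}$, the $\Z_p$-line through the constant vector $\mathbf{1}=(1,1,1,\dots)\in\Z_p(\N)$.

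The first step is to confirm that $M$ is not an idempotent image. If $g\in\B(\Q_p(\N))$ were idempotent with $\operatorname{im}(g)=M$, then, since $M$ is free of rank one over $\Z_p$ with basis $\mathbf{1}$ and $\Q_p(\N)$ is $p$-torsion-free, $g$ factors as $g(\xi)=\phi(\xi)\mathbf{1}$ for a unique continuous $\Z_p$-linear map $\phi\colon\Q_p(\N)\rightarrow\Z_p$, with $\phi(\mathbf{1})=1$ because $g$ acts as the identity on its image. But $p^{-N}\delta_i\in\Q_p(\N)$ for every $i,N\in\N$ forces $|\phi(\delta_i)|_p\leq p^{-N}$ for all $N$, whence $\phi(\delta_i)=0$. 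Since the finite-support vectors are $\tau$-dense in $\Q_p(\N)$ by Lemma \ref{convergence in our Hilbert space}, continuity of $\phi$ then gives $\phi\equiv 0$, contradicting $\phi(\mathbf{1})=1$.

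The projections themselves are built by block averaging. Pick an integer $q\geq 2$ with $\gcd(q,p)=1$ so that $q^{-1}\in\Z_p^\times$ (take $q=2$ when $p$ is odd and $q=3$ when $p=2$). For each $n\geq 1$ let $e_n$ be the symmetric operator whose $(i,j)$-matrix entry equals $q^{-n}$ when $\lfloor i/q^n\rfloor=\lfloor j/q^n\rfloor$ and $0$ otherwise. Each row and column of $M_{e_n}$ has exactly $q^n$ nonzero entries, all lying in $\Z_p$, so Theorem \ref{matrix representation of the operators} places $e_n\in\B(\Q_p(\N))$; a routine block computation yields $e_n^2=e_n=e_n^*$, and Lemma \ref{norm of a matrix} gives $\|e_n\|=1$, so $e_n$ is a contractive projection whose image is the set of vectors constant on every block of length $q^n$. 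Since a block of length $q^{n+1}$ decomposes into $q$ consecutive blocks of length $q^n$, one has $\operatorname{im}(e_{n+1})\subseteq\operatorname{im}(e_n)$, and any vector in every $\operatorname{im}(e_n)$ is constant on arbitrarily long initial segments of $\N$, hence constant on $\N$; this gives $\bigcap_n\operatorname{im}(e_n)=M$.

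For the two-projection statement, take $e:=e_1$ together with the shifted variant $f$ that fixes $\delta_0$ and performs the same length-$q$ averaging on the blocks $(1,\dots,q),(q+1,\dots,2q),\dots$ starting at index $1$. The same verification shows $f$ is a contractive projection, and chaining the equalities forced by $\operatorname{im}(e)$ (coordinates within $\{kq,\dots,(k+1)q-1\}$ agree) with those forced by $\operatorname{im}(f)$ (coordinates within $\{kq+1,\dots,(k+1)q\}$ agree) propagates $\xi_0=\xi_1=\xi_2=\cdots$ throughout $\N$, so $\operatorname{im}(e)\cap\operatorname{im}(f)=M$. The main subtlety I anticipate is the choice of $M$ itself: images of idempotents are automatically saturated with respect to division by $p$ (if $g$ is idempotent and $p\xi\in\operatorname{im}(g)$, then $p(g(\xi)-\xi)=0$ combined with $p$-torsion-freeness yields $\xi=g(\xi)\in\operatorname{im}(g)$), which rules out obvious non-images such as $p\Z_p(\N)$. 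One must therefore find a \emph{pure} submodule whose only continuous $\Z_p$-linear retraction is zero, and $\Z_p\cdot\mathbf{1}$ meets that requirement precisely because of how the topology $\tau$ restricts continuous $\Z_p$-valued functionals.
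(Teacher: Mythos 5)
Your argument is correct and complete. The paper itself gives no proof of this theorem (it defers the counter-examples to Section 3.1 of \cite{claussnitzer}), so there is nothing in the text to compare against; but your construction --- block-averaging projections over blocks of length $q^n$ with $q$ prime to $p$, whose common image is the line $\Z_p\cdot\mathbf{1}$, combined with the observation that any $\tau$-continuous $\Z_p$-linear retraction onto that line must annihilate every $\delta_i$ and hence vanish on a $\tau$-dense set --- does everything the theorem asks for, and every step checks out (in particular $e_ne_{n+1}=e_{n+1}e_n=e_{n+1}$, so the sequence is decreasing in the strong sense used earlier in the paper). The only step worth spelling out is in the density argument: having shown $\phi(\delta_i)=0$, you should note that $\phi(p^{-N}\delta_i)=0$ as well, since $p^N\phi(p^{-N}\delta_i)=\phi(\delta_i)=0$ and $\Z_p$ is an integral domain; only then does $\phi$ vanish on \emph{all} finite-support vectors of $\Q_p(\N)$ (which have $\Q_p$-coefficients) rather than merely on the $\Z_p$-span of the $\delta_i$, which is what the appeal to continuity requires.
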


% It would be an interesting question if we can embed $\B(\Q_p(\N))$ in a larger $*$-ring where the infimum and the supremum of a given set of projections always exist and whose elements can be interpreted as unbounded operators on $\Q_p(\N)$ in a useful way. 

\subsection{The group $K_0(\K(\Q_p(X)))$}\label{compact K-group}

 In order to calculate $K_0(\K(\mathbb Q_p(X)))$, we will first establish some more general lemmas. %We will call an operator $e\in \B(\mathbb Q_p(X))$ an \emph{idempotent} if it satisfies $e^2=e$.
 
  Two idempotents $e, f \in A$ in a unital Banach-$\mathbb Z_p$-algebra $A$ are called \emph{equivalent} with respect to $A$ if there is an invertible element $g\in A$ such that $g^{-1}eg=f$. The following two lemmas should essentially be well-known and in fact holds for an arbitrary unital Banach-$\Z_p$-algebra. 

\begin{lemma}\label{close projections}
Let $A$ be a closed sub-$\mathbb Z_p$-algebra of $\B(\mathbb Q_p(X))$ that contains the identity. Let $e, f \in A$ be idempotents such that $e\neq 0$ and $\|e-f\|<1/\|e\|$. Then, $e$ and $f$ are equivalent with respect to $A$. 
\end{lemma}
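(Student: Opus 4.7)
The plan is to mimic the classical Archimedean proof that close idempotents are similar, with the simplification afforded by the ultrametric norm. I set
$$u := ef + (1-e)(1-f) \in A, \qquad v := fe + (1-f)(1-e) \in A,$$
and verify two algebraic identities, valid in any unital ring, using only $e^2=e$, $f^2=f$ and $e(1-e)=(1-e)e=0=f(1-f)=(1-f)f$:
$$eu = ef = uf, \qquad uv = vu = 1-(e-f)^2.$$
The first will give $u^{-1}eu=f$ once $u$ is invertible; the second reduces invertibility of $u$ to invertibility of $1-(e-f)^2$.

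Next I estimate. Since $e$ is a nonzero idempotent in the ultrametric Banach-$\Z_p$-algebra $\B(\Q_p(X))$, submultiplicativity forces $\|e\| = \|e^2\| \leq \|e\|^2$, hence $\|e\| \geq 1$. Combined with the hypothesis this gives $\|e-f\| < 1/\|e\| \leq 1$ and therefore
$$\|(e-f)^2\| \leq \|e-f\|^2 < 1.$$
In a complete non-Archimedean normed algebra such a condition already forces $1-(e-f)^2$ to be invertible: the Neumann series $\sum_{n\geq 0}(e-f)^{2n}$ is ultrametrically Cauchy and converges in $\B(\Q_p(X))$, with limit lying in $A$ because $A$ is closed and unital. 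Thus $uv=vu$ is invertible in $A$, whence $u$ itself is invertible in $A$ (with $u^{-1}= v\,(1-(e-f)^2)^{-1}$).

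Combining $eu=uf$ with $u\in A^{\times}$ yields $u^{-1}eu = f$, so $e$ and $f$ are equivalent via $g:=u$. I expect no genuine obstacle: the only computations of substance are the two polynomial identities for $u$, $v$, and the one place requiring care is the expansion of $uv$, where the mixed terms $ef(1-f)(1-e)$ and $(1-e)(1-f)fe$ vanish and the remaining piece $(1-e)(1-f)^2(1-e)$ reduces after one round of multiplication to $1-e-f+ef+fe-efe$, precisely cancelling the $efe$ coming from $ef\cdot fe$. The spectral/invertibility step, which is the delicate point of the Archimedean analogue (requiring $\|e-f\|<1/\|2e-1\|$), here becomes immediate thanks to the ultrametric inequality.
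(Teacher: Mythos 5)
Your proof is correct, and it follows a slightly different path than the paper's. The paper works with the single element $u_0=1-e-f+2fe$ (which is exactly your $v=fe+(1-f)(1-e)$), proves invertibility by bounding $\|1-u_0\|=\|e+f-2fe\|=\|f(f-e)+(e-f)e\|\leq\max\{\|e\|,\|f\|\}\,\|e-f\|<1$ --- this is the one place where the full strength of the hypothesis $\|e-f\|<1/\|e\|$ is used --- and then concludes from the intertwining relation $fu_0=fe=u_0e$. You instead introduce the pair $u,v$ and invert via the algebraic identity $uv=vu=1-(e-f)^2$, which only requires $\|(e-f)^2\|\leq\|e-f\|^2<1$, i.e.\ the weaker hypothesis $\|e-f\|<1$; the observation $\|e\|\geq1$ for a nonzero idempotent (from $\|e\|\leq\|e\|^2$) then makes the stated hypothesis suffice. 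All the algebra you flag checks out: $eu=ef=uf$, the mixed terms vanish since $f(1-f)=(1-f)f=0$, and $efe+(1-e)(1-f)(1-e)=1-e-f+ef+fe$; the two-sided invertibility of $u$ from $uv=vu$ invertible, with $u^{-1}=v(1-(e-f)^2)^{-1}\in A$ by closedness, is also fine. So your argument is a legitimate variant that is marginally more economical with the hypothesis, at the cost of carrying the second element $v$ and the product identity; the paper's version is shorter but consumes the factor $1/\|e\|$ in its norm estimate.
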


\begin{proof} If $e$ and $f$ are as in the lemma, we obtain 
\begin{align*}
\|f+e-2fe\|&=\|f-fe+e-fe\|\\
&\leq\max \{\|e\|\|e-f\|, \|f\|\|e-f\|\}<\|e\|\frac{1}{\|e\|}=1 
\end{align*}
because $\|e\|\geq 1>\|e-f\|$ and therefore $\|f\|=\|e\|$. As in the Archimedean case, one can, $A$ being closed, use the Neumann series (geometric series) to show that the element $u=1-f-e+2fe\in A$ is invertible in $A$. On the other hand, one has $fu=fe=ue$ and the lemma follows. 
\end{proof}

\begin{lemma}\label{general lemma on approximation} 
Let $\mathcal A$ be an ultra-normed Banach algebra. Suppose that $a\in\mathcal A\setminus\{0\}$ satisfies $\|a^2-a\|<1/\|a\|^2$. Then, there is an idempotent element $e\in\mathcal A$ such that $\|a-e\|<\min\{1/ \|a\|,1\}$. The idempotent $e$ is given as the limit of the sequence $P_m(a)$ as $m\rightarrow\infty$ for a certain sequence $P_m$ of polynomials in $\Z[x]$. 
\end{lemma}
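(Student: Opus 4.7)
The plan is to apply a Hensel-style Newton iteration for idempotents, built around the polynomial
\[
P(X) \;:=\; 3X^2 - 2X^3 \;\in\; \Z[X],
\]
for which $0$ and $1$ are super-attracting fixed points ($P'(0) = P'(1) = 0$). The key algebraic identity, easily checked in $\Z[X]$, is
\[
P(X) - P(X)^2 \;=\; (X - X^2)^2\,(3 + 4X - 4X^2),
\]
together with its companion $P(X) - X = -(X - X^2)(1 - 2X)$. Because every iterate will be a polynomial in $a$, these identities transfer to $\mathcal A$ with no commutativity issues, and since $\|m\cdot 1\| \leq 1$ for $m \in \Z$ in an ultra-normed Banach algebra, the integer coefficients contribute nothing to norm estimates. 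This is essentially Newton's method applied to $F(x) = x^2 - x$, with quadratic convergence built into the algebra.

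I would then set $a_0 := a$, $a_{n+1} := P(a_n)$, $b_n := a_n - a_n^2$, and read off from the identities
\[
b_{n+1} \;=\; b_n^2\,(3 + 4a_n - 4 a_n^2), \qquad a_{n+1} - a_n \;=\; -b_n(1 - 2a_n),
\]
the estimates $\|b_{n+1}\| \leq \|b_n\|^2 \max(1, \|a_n\|^2)$ and $\|a_{n+1} - a_n\| \leq \|b_n\|\max(1, \|a_n\|)$. A short induction using the ultrametric inequality shows $\|a_n\| \leq \max(1, \|a\|)$ for all $n$, with the strict ultrametric triangle equality forcing $\|a_{n+1}\| = \|a_n\|$ whenever $\|a_n\| > 1$. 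Setting $\alpha_n := \|b_n\| \max(1, \|a_n\|^2)$, the hypothesis $\|a^2 - a\| < 1/\|a\|^2$ translates exactly into $\alpha_0 < 1$ when $\|a\| \geq 1$, while for $\|a\| < 1$ one gets $\alpha_0 = \|b_0\| \leq \|a\| < 1$ for free. The two estimates now combine to $\alpha_{n+1} \leq \alpha_n^2$, hence $\alpha_n \leq \alpha_0^{2^n} \to 0$ super-exponentially; in particular $\|a_{n+1} - a_n\| \to 0$, and by completeness the sequence converges to some $e \in \mathcal A$ satisfying $e^2 - e = \lim b_n = 0$. Since $a_n = P_n(a)$ for $P_n := P^{\circ n} \in \Z[X]$, the resulting idempotent $e$ is realised as $\lim_m P_m(a)$, as required.

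For the sharp quantitative bound $\|a - e\| < \min(1/\|a\|, 1)$ I would combine the ultrametric max-bound
\[
\|a - e\| \;\leq\; \max_{n \geq 0} \|a_{n+1} - a_n\| \;\leq\; \max_{n \geq 0} \frac{\alpha_n}{\max(1, \|a_n\|)}
\]
with a case split on $\|a\|$. If $\|a\| > 1$ then $\|a_n\| = \|a\|$ for every $n$, so the right-hand side equals $\alpha_0/\|a\| < 1/\|a\|$; if $\|a\| \leq 1$ then $\max(1, \|a_n\|) = 1$, so the right-hand side is at most $\alpha_0 < 1$. In either case the bound matches $\min(1/\|a\|, 1)$ exactly. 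The main obstacle, in my view, is not the quadratic convergence itself, which is standard once the polynomial identity is in hand, but rather the careful ultrametric bookkeeping needed to track $\|a_n\|$ through the iteration precisely enough to extract the strict inequality with the sharp factor $\min(1/\|a\|, 1)$, rather than a cruder bound like $\alpha_0$.
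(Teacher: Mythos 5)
Your proof is correct, but it takes a genuinely different route from the paper's. The paper constructs, for each $m$, the \emph{unique} polynomial $P_m\in\Z[x]$ of degree at most $2m-1$ with $P_m(0)=0$, $P_m(1)=1$ and all derivatives up to order $m-1$ vanishing at both $0$ and $1$; establishing existence requires evaluating the determinant of a binomial-coefficient matrix (via a lattice-path argument), and the convergence estimates rest on the divisibility facts $P_{m+1}(x)-P_m(x)=(x^2-x)^m(\alpha x+\beta)$ and $P_m(x)^2-P_m(x)=(x^2-x)^m g_m(x)$ together with an auxiliary exponent $d>2$ chosen so that $\|a^2-a\|<1/\|a\|^d$. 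You instead iterate the single cubic $P(x)=3x^2-2x^3$ (which is precisely the paper's $P_2$) and exploit the closed-form identities $P-P^2=(x-x^2)^2(3+4x-4x^2)$ and $P(x)-x=-(x-x^2)(1-2x)$, which I have checked; tracking $\alpha_n=\|b_n\|\max\{1,\|a_n\|^2\}$ with $\alpha_{n+1}\leq\alpha_n^2$ replaces both the determinant lemma and the $d>2$ trick, and your ultrametric bookkeeping (in particular $\|a_{n+1}\|=\|a_n\|$ when $\|a_n\|>1$) correctly delivers the sharp bound $\|a-e\|<\min\{1/\|a\|,1\}$. Your approach is more elementary and self-contained; it also preserves the two features of the $P_m$ that the paper uses later (no constant term, and $P^{\circ n}(x)-x$ divisible by $x^2-x$ in $\Z[x]$), so it would serve Lemma \ref{close projections and approximation} and Theorem \ref{lifting of idempotents} equally well. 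What the paper's construction buys is only the minimal degree $2m-1$ (versus $3^n$ for your iterates) and the explicit interpolation formula. One small wording point: your justification ``$\|m\cdot 1\|\leq 1$ for $m\in\Z$'' presumes a unit; the fact you actually need is $\|mx\|\leq\|x\|$ for $m\in\Z$, which follows from the strong triangle inequality and keeps the argument valid in the non-unital case as well.
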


\begin{proof} The result is clear if $\|a\|<1$, so suppose $\|a\|\geq1$. Then, we have in particular $\|a^2-a\|<1$. 
First, we will have to establish that for each $m\in\mathbb N, m\geq 1$, there is exactly one polynomial $P_m\in\mathbb Z[x]$ such that 
\begin{align*}
P_m(0)=0, P_m(1)=1 \textnormal{ and } P_m^{(i)}(0)=P_m^{(i)}(1)=0 
\end{align*}
for $i\in\{1,\ldots,m-1\}$ and $\operatorname{deg}P_m\leq 2m-1$. The ansatz $P_m(x)=\sum_{i=0}^{2m-1}a_i x^i$ yields $P_m^{(k)}(x)=\sum_{i=k}^{2m-1}a_i(i(i-1)\ldots(i-k+1))x^{i-k}$, thus $f^{(k)}(0)=a_k k!=0$ and $a_k=0$ for $k\in\{1, \ldots, m-1\}$. Furthermore, one gets 
\begin{align*}
f^{(k)}(1)=\sum_{i=m}^{2m-1}a_i\binom{i}{k}=\delta_k, k\in\{0, \ldots, m-1\}, 
\end{align*}
where $\delta_k$ denotes the value $1$ for $k=0$ and $0$ else. The resulting system of linear equations has $m$ equations and $m$ variables. Using a result from \cite{aignerziegler}, Chapter ``Gitterwege und Determinanten'', one can easily see that the determinant of the coefficient matrix of this system is $1$. Therefore, it admits a unique solution and the unique existence of the polynomial $P_m\in\mathbb Z[x]$ is proved\footnote{It is possible to prove an explicit formula for the polynomials $P_m$: 
\begin{align*}
P_m(x)=\sum_{k=m}^{2m-1}x^k\sum_{i=k-m+1}^{m}(-1)^{i+1}\binom{m}{i}\binom{m-1+k-i}{k-i}. 
\end{align*}
}. \\
 Consider now the sequence $P_m(a)$. We notice that 
\begin{align*}
\|P_{m+1}(a)-P_m(a)\|=\|(a^2-a)^m(\alpha a+\beta)\|\leq\|a^2-a\|^m\|a\|\rightarrow0
\end{align*}
for $m\rightarrow\infty$ (where $\alpha, \beta\in\Z$) and 
\begin{align*}
\|P_m(a)^2-P_m(a)\|=\|(a^2-a)^m g_m(a)\|\leq\|a^2-a\|^m\|a\|^{2m-2} 
\end{align*}
for a certain polynomial $g_m$ of degree at most $2m-2$ over $\Z$. Choose $d>2$ such that $\|a^2-a\|<1/\|a\|^d$ and define $c:=1-2/d>0$, i.\,e. $d(1-c)=2$. Then, we obtain 
\begin{align*}
\|P_m(a)^2-P_m(a)\|\leq\|a^2-a\|^m\|a\|^{2m-2}&<\frac{1}{\|a\|^{d(1-c)m}}\|a^2-a\|^{cm}\|a\|^{2m-2}\\&=\frac{\|a^2-a\|^{cm}}{\|a\|^2}\rightarrow0 
\end{align*}
for $m\rightarrow\infty$. Hence, the sequence $(P_m(a))$ converges to an idempotent $e\in\mathcal A$. The inequality $\|P_{m+1}(a)-P_m(a)\|\leq\|a^2-a\|^m\|a\|<1/\|a\|$ for $m\in\N, m\geq1$ and the convergence $P_{m+1}(a)-P_m(a)\rightarrow0$ imply that $\|e-a\|<1/\|a\|$. 
\end{proof}

\begin{theorem}\label{continuity in the unital case} 
Let $(A_n)_{n\in\mathbb N}$ be an increasing sequence of closed sub-$\mathbb Z_p$-algebras of $\B(\mathbb Q_p(X))$. Moreover, let $A$ be the closed union of the $A_n$. Then $K_0(A)$ is isomorphic to the direct limit of the sequence of the $K_0(A_n)$ with the canonical homomorphisms. 
\end{theorem}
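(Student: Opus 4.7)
\emph{(Proposal.)}
The plan is to show that the natural map
$$\Phi\colon \varinjlim_n K_0(A_n) \longrightarrow K_0(A)$$
induced by the inclusions $A_n \hookrightarrow A$ is a bijection. Since $A = \overline{\bigcup_n A_n}$ with respect to the operator norm, the same holds for the matrix algebras: $M_k(A) = \overline{\bigcup_n M_k(A_n)}$ for every $k\in\N$. Each $M_k(A_n)$ is itself a unital closed sub-$\Z_p$-algebra of $\B(\Q_p(X\times\{1,\dots,k\}))$, equipped with an ultra-norm, so Lemma \ref{close projections} and Lemma \ref{general lemma on approximation} apply verbatim at every matrix level.

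For surjectivity I would take an idempotent $e\in M_k(A)$ and approximate it by some $a\in M_k(A_n)$ with $\|e-a\|$ small. By the ultrametric identity $\|a^2-a\|=\|a^2-e^2+(e-a)\|\le\max(\|a+e\|\|a-e\|,\|a-e\|)$, the quantity $\|a^2-a\|$ can be made smaller than $1/\|a\|^2$; Lemma \ref{general lemma on approximation}, applied inside $M_k(A_n)$, then yields an idempotent $f\in M_k(A_n)$ with $\|f-a\|<1/\|a\|$. By another ultrametric estimate, $\|e-f\|<1/\|e\|$, so Lemma \ref{close projections} (in $M_k(A)$) produces an invertible $u\in M_k(A)$ with $u^{-1}eu=f$. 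Hence $[e]_{K_0(A)}$ is the image of $[f]_{K_0(A_n)}$, establishing that $\Phi$ hits every generator.

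For injectivity suppose $e,f\in M_k(A_n)$ are idempotents whose images in $K_0(A)$ coincide. After enlarging $k$ and adding zero blocks, I may assume there is an invertible $g\in M_k(A)$ with $g^{-1}eg=f$. I would next approximate $g$ by an $h\in M_k(A_m)$ for some $m\ge n$ with $\|g-h\|<\|g^{-1}\|^{-1}$. The Neumann series for $(1+g^{-1}(h-g))^{-1}$ converges in the closed unital subalgebra $M_k(A_m)$, so $h$ is invertible in $M_k(A_m)$ and $\|g^{-1}-h^{-1}\|$ is controlled by $\|g-h\|$. Consequently $h^{-1}eh\in M_k(A_m)$ is an idempotent that is as close to $f=g^{-1}eg$ as one wishes; shrinking $\|g-h\|$ further if necessary, Lemma \ref{close projections} (in $M_k(A_m)$) forces $h^{-1}eh$ and $f$ to be conjugate by an invertible element of $M_k(A_m)$. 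Concatenating the two conjugations, $e$ and $f$ become equivalent already in $M_k(A_m)$, which is exactly what it means for their classes to be identified in the direct limit.

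I expect the main obstacle to be the bookkeeping in the injectivity step: one must choose the approximation $h$ so that three separate inequalities hold simultaneously in $M_k(A_m)$ (invertibility of $h$ via Neumann, the closeness of $h^{-1}$ to $g^{-1}$, and the hypothesis $\|h^{-1}eh-f\|<1/\|f\|$ required by Lemma \ref{close projections}). The ultrametric is a considerable help here, since products and sums obey $\|xy-x'y'\|\le\max(\|x\|\|y-y'\|,\|x-x'\|\|y\|)$, so a single strict bound $\|g-h\|<\delta$ depending only on $\|g\|,\|g^{-1}\|,\|e\|,\|f\|$ suffices to drive the whole argument.
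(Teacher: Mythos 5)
Your proposal is correct and follows essentially the same route as the paper, whose proof simply invokes the standard continuity-of-$K_0$ argument (Murphy, pp.\ 234--240) built on Lemma \ref{close projections} and Lemma \ref{general lemma on approximation}; you have written out exactly the details the paper delegates to that reference. (One cosmetic point: in the surjectivity step the estimate via $\|a+e\|\|a-e\|$ implicitly uses the factorization $a^2-e^2=(a+e)(a-e)$, which fails in a noncommutative algebra; use instead $a^2-e^2=a(a-e)+(a-e)e$, giving $\|a^2-e^2\|\le\max\{\|a\|,\|e\|\}\,\|a-e\|$, which yields the same conclusion.)
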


\begin{proof} The proof is (as in the Archimedean case) a straightforward application of the two preceding lemmas (cf. \cite{murphy}, pp. 234-240), cf. also \cite{claussnitzer}, p. 46.
\end{proof}

 Again, a more general result is true: Let $(A_n, \varphi_n)$ be a sequence of Banach-$\mathbb Z_p$-algebras $A_n$ and contractive homomorphisms $\varphi_n\colon A_n\rightarrow A_{n+1}$, and let $A$ be their direct limit as a $\mathbb Z_p$-Banach algebra. Then, $K_0(A)$ is the direct limit of the sequence $(K_0(A_n), K_0(\varphi_n))$. 

Observe that $\K(\Q_p(X))$ is the closure of the set of all operators whose matrices have only finitely many non-vanishing entries. As the finite-dimensional matrix algebras $\Q_p^{n\times n}$ (as well as $\Z_p^{n\times n}$) have $K_0$-group $\Z$, we can therefore state the following corollary as an application of the preceding theorem (here, we let $\K_{(1)}(\mathbb Q_p(X))$ denote the set of all operators in $\K(\mathbb Q_p(X))$ with norm not greater than $1$): 

\begin{corollary}\label{compact operators and K-groups}
We have $K_0(\K(\mathbb Q_p(X)))=\mathbb Z$. 

\noindent The canonical map $K_0(\K_{(1)}(\mathbb Q_p(X)))\rightarrow K_0(\K(\mathbb Q_p(X)))$ is an isomorphism. 
\end{corollary}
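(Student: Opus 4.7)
The plan is to apply Theorem \ref{continuity in the unital case} after passing to unitizations, since both $\K(\Q_p(X))$ and $\K_{(1)}(\Q_p(X))$ are non-unital (the identity on $\Q_p(X)$ is not compact whenever $X$ is infinite; the case of finite $X$ is trivial because then $\K=\B\cong\Q_p^{n\times n}$, whose $K_0$ is $\Z$). For each $n\in\N$, let $F_n\subseteq\B(\Q_p(X))$ be the subalgebra of operators whose matrix representation is supported on the corner $\{1,\dots,n\}\times\{1,\dots,n\}$; as a $\Z_p$-algebra $F_n\cong\Q_p^{n\times n}$, which is finite-dimensional over $\Q_p$ and hence norm-closed. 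Set $G_n:=F_n\cap\K_{(1)}(\Q_p(X))\cong\Z_p^{n\times n}$. The observation preceding the corollary identifies $\K(\Q_p(X))$ as the closed union $\overline{\bigcup_n F_n}$; combining Lemma \ref{norm of a matrix} with the matrix-entry characterization of compactness in Lemma \ref{compact operators and the norm-topology}(b) yields the analogous identification $\K_{(1)}(\Q_p(X))=\overline{\bigcup_n G_n}$.

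Next I would form the unital closed sub-$\Z_p$-algebras $A_n:=\Z_p\cdot 1+F_n$ and $B_n:=\Z_p\cdot 1+G_n$ of $\B(\Q_p(X))$. Since $1$ is not compact, $\Z_p\cdot 1\cap F_n=\{0\}$ and these sums are internal direct sums realising the abstract unitizations $\widetilde{F_n}$ and $\widetilde{G_n}$; norm-closedness of $A_n$ in $\B(\Q_p(X))$ follows from the continuous retraction $a\mapsto\lim_{i\to\infty}a_{ii}$ onto $\Z_p\cdot 1$ (well-defined on $A_n$ because the $F_n$-part has finite-support matrix entries), and analogously for $B_n$. The closed unions are $\overline{\bigcup_n A_n}=\Z_p\cdot 1+\K(\Q_p(X))=\widetilde{\K(\Q_p(X))}$ and similarly $\widetilde{\K_{(1)}(\Q_p(X))}$. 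Theorem \ref{continuity in the unital case} then gives
\begin{align*}
K_0\bigl(\widetilde{\K(\Q_p(X))}\bigr)=\varinjlim K_0(A_n),\quad K_0\bigl(\widetilde{\K_{(1)}(\Q_p(X))}\bigr)=\varinjlim K_0(B_n).
\end{align*}

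Using the canonical split exact sequence $0\to F_n\to A_n\to\Z_p\to 0$ (and its analogue for $G_n$), I obtain $K_0(A_n)\cong K_0(\Z_p)\oplus K_0(F_n)=\Z\oplus\Z$ and $K_0(B_n)\cong\Z\oplus\Z$, using that $K_0(\Q_p^{n\times n})=\Z=K_0(\Z_p^{n\times n})$ as noted in the paper. The corner inclusions $F_n\hookrightarrow F_{n+1}$ and $G_n\hookrightarrow G_{n+1}$ send the rank-one idempotent $e_{11}$ to itself, so they induce the identity on $K_0$ at each level. Hence both direct limits are $\Z\oplus\Z$; splitting off the scalar $K_0(\Z_p)=\Z$ summand coming from the unitization yields $K_0(\K(\Q_p(X)))=\Z$ and $K_0(\K_{(1)}(\Q_p(X)))=\Z$. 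For the second claim, the inclusion $\K_{(1)}(\Q_p(X))\hookrightarrow\K(\Q_p(X))$ restricts at each corner to $G_n=\Z_p^{n\times n}\hookrightarrow\Q_p^{n\times n}=F_n$, which again sends $[e_{11}]$ to $[e_{11}]$; passing to direct limits the canonical map becomes the identity $\Z\to\Z$.

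The main obstacle is essentially bookkeeping around non-unitality: one has to verify that the unitizations $A_n$ and $B_n$ are genuinely norm-closed in $\B(\Q_p(X))$, that their closed union recovers the unitization of $\K(\Q_p(X))$ (respectively $\K_{(1)}(\Q_p(X))$), and that under the identifications $K_0(F_n)=\Z=K_0(F_{n+1})$ the transition maps are the identity so that the direct limit collapses to a single $\Z$. Beyond this, no new analytic input is needed past Theorem \ref{continuity in the unital case} and the known values of $K_0$ for matrix algebras over $\Q_p$ and $\Z_p$.
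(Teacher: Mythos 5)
Your proof is correct and takes essentially the same route as the paper: the corollary is obtained there by viewing $\K(\Q_p(X))$ (resp.\ $\K_{(1)}(\Q_p(X))$) as the closed union of the corner algebras $\Q_p^{n\times n}$ (resp.\ $\Z_p^{n\times n}$) and invoking the $K_0$-continuity of Theorem \ref{continuity in the unital case}, with identity transition maps on $K_0$. Your explicit unitization bookkeeping merely spells out the non-unital issue that the paper handles implicitly via the more general direct-limit remark following that theorem.
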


%\begin{corollary}\label{compact operators and K-theory}
%The canonical map $K_0(\K_{(1)}(\mathbb Q_p(X)))\rightarrow K_0(\K(\mathbb Q_p(X)))$ is an isomorphism. 
%\end{corollary}

\begin{theorem}\label{compact idempotents}
Let $e$ be an idempotent in $\K(\mathbb Q_p(X))$. Then, the image of $e$ is a finite dimensional $\mathbb Q_p$-vector space. 
\end{theorem}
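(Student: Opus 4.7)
My approach is to approximate $e$ in operator norm by a finite-rank idempotent $f_N$ whose image is obviously a finite-dimensional $\Q_p$-vector space, and then to transfer this structure to $\operatorname{im}(e)$ via a small conjugation.

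To construct $f_N$ I would use Lemma \ref{compact operators and the norm-topology} to conclude that the matrix entries $e_{ij}$ of $e$ satisfy $|e_{ij}|_p\to 0$. Fix an exhaustive increasing sequence $X_1\subseteq X_2\subseteq\dots$ of finite subsets of $X=\N$ and let $e^{(N)}\in\B(\Q_p(X))$ be the operator whose matrix agrees with that of $e$ on $X_N\times X_N$ and vanishes elsewhere; Lemma \ref{norm of a matrix} then gives $\|e-e^{(N)}\|\to 0$, and each $\operatorname{im}(e^{(N)})$ is contained in the finite-dimensional $\Q_p$-subspace $W_N:=\operatorname{span}_{\Q_p}\{\delta_x\setsep x\in X_N\}\subseteq\Q_p(X)$. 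Since $e^2=e$, we also have $\|(e^{(N)})^2-e^{(N)}\|\to 0$, so for large $N$ Lemma \ref{general lemma on approximation} applied in the ultra-normed Banach algebra $\B(\Q_p(X))$ produces an idempotent $f_N$ close to $e^{(N)}$, obtained as a norm-limit of polynomials $P_m(e^{(N)})$ with $P_m\in\Z[x]$ and $P_m(0)=0$. As $\K(\Q_p(X))$ is a closed ideal (Lemma \ref{compact operators and the norm-topology}), this gives $f_N\in\K(\Q_p(X))$ with $\operatorname{im}(f_N)\subseteq \operatorname{im}(e^{(N)})\subseteq W_N$.

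Next I would verify that $\operatorname{im}(f_N)$ is actually a $\Q_p$-subspace of $W_N$: the restriction $f_N|_{W_N}$ is a $\Z_p$-linear endomorphism of $W_N$, and the identity $p\cdot f_N(p^{-1}w)=f_N(w)$ combined with the $p$-torsion-freeness of $\Q_p(X)$ forces $f_N(p^{-1}w)=p^{-1}f_N(w)$, making $f_N|_{W_N}$ even $\Q_p$-linear; idempotency then gives $\operatorname{im}(f_N)=\operatorname{im}(f_N|_{W_N})$, a finite-dimensional $\Q_p$-subspace. For $N$ large enough that $\|e-f_N\|<1/\|e\|$, Lemma \ref{close projections} applied in $A=\B(\Q_p(X))$ produces an invertible $g\in\B(\Q_p(X))$ with $gf_Ng^{-1}=e$, so $\operatorname{im}(e)=g(\operatorname{im}(f_N))$. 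Applying the same torsion-free trick to $g$: for $v=g(w)\in\operatorname{im}(e)$ with $w\in\operatorname{im}(f_N)$, the vector $g(p^{-1}w)\in\operatorname{im}(e)$ satisfies $p\cdot g(p^{-1}w)=v$, so the entrywise $p^{-1}v$ lies in $\Q_p(X)$ and coincides with $g(p^{-1}w)\in\operatorname{im}(e)$. Hence $\operatorname{im}(e)$ is closed under multiplication by $p^{-1}$, hence a $\Q_p$-subspace of $\Q_p(X)$, and $g$ induces a $\Q_p$-linear bijection $\operatorname{im}(f_N)\to\operatorname{im}(e)$, yielding $\dim_{\Q_p}\operatorname{im}(e)=\dim_{\Q_p}\operatorname{im}(f_N)<\infty$.

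The main subtle point is the very last step: since $\B(\Q_p(X))$ consists only of $\Z_p$-linear operators, the image $\operatorname{im}(e)$ is a priori merely a $\Z_p$-submodule of $\Q_p(X)$, and the $\Q_p$-vector space structure has to be recovered indirectly. The two essential ingredients making this work are the $p$-torsion-freeness of $\Q_p(X)$ and the rigidity of idempotents from Lemmas \ref{general lemma on approximation} and \ref{close projections}, which together force the small conjugation $g$ to transport the manifest $\Q_p$-structure on the finite-dimensional model $W_N$ to $\operatorname{im}(e)$.
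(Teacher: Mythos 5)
Your proof is correct and follows essentially the same route as the paper: truncate the matrix of $e$ to finitely many entries, upgrade the truncation to a nearby idempotent with finitely many matrix entries via the polynomials $P_m$ of Lemma \ref{general lemma on approximation}, and transfer the conclusion to $e$ through the conjugation provided by Lemma \ref{close projections}. Your additional torsion-freeness argument showing that $\operatorname{im}(e)$ is genuinely a $\Q_p$-subspace rather than merely a $\Z_p$-submodule is a detail the paper's proof leaves implicit, and it is a worthwhile addition.
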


\begin{proof} A compact operator $e$ in $\B(\Q_p(X))$ has the property that it can be approximated in norm by an operator $F$ in $\B(\Q_p(X))$ having only finitely many non-vanishing matrix-entries such that $\|e-F\|<1/\|e\|^3$. If $e$ is an idempotent, then we have $\|F^2-F\|\leq \max\{\|F^2-e^2\|,\|e-F\|\}\leq \max\{ \|F-e\|\|F\|,\|F-e\|\|e\|,\|e-F\|\}<1/\|e\|^2$. Therefore, there is an idempotent $f$ with only finitely many matrix entries such that $\|f-F\|<1/\|e\|$. We also obtain $\|f-e\|<1/\|e\|$ and therefore the equivalence of $e$ and $f$. As $f$ has finite-dimensional image, also $e$ must have finite-dimensional image. 
\end{proof}

\subsection{The group $K_0(\B(\Q_p(X)))$}\label{vanishing K-group}

Let $\B_{(1)}(\mathbb Q_p(X))$ denote the set of all operators in $\B(\mathbb Q_p(X))$ with norm not greater than $1$. 
Next, we want to show that $K_0(\B_{(1)}(\mathbb Q_p(X)))=0$. 

\begin{lemma}\label{infinite sum ring}
If $X$ is countably infinite, the ring $\B_{(1)}(\mathbb Q_p(X))$ is an infinite sum ring. In particular, $K_0(\B_{(1)}(\mathbb Q_p(X)))=0$. 
\end{lemma}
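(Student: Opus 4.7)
The plan is to exhibit explicitly a Wagoner-style infinite sum ring structure on $R := \B_{(1)}(\Q_p(\N))$ (taking $X = \N$ without loss of generality, since any two countably infinite index sets yield isomorphic algebras) and then invoke the standard Eilenberg swindle that forces $K_0 = 0$ for such rings.

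First, I would fix a partition $\N = \bigsqcup_{n=1}^\infty A_n$ into countably many pairwise disjoint countably infinite subsets, together with bijections $\phi_n \colon \N \to A_n$. For each $n$ this gives an operator $V_n \in R$ defined by $V_n \delta_k = \delta_{\phi_n(k)}$. Its matrix is a $0/1$ matrix with exactly one $1$ in each column and at most one $1$ in each row, so Theorem \ref{matrix representation of the operators} yields $V_n \in \B(\Q_p(\N))$ and Lemma \ref{norm of a matrix} gives $\|V_n\| = 1$. By Lemma \ref{adjoint matrix} the adjoint $V_n^*$ is given by the transposed matrix and therefore also lies in $R$. A direct check on the $\delta$-basis yields the Cuntz-type orthogonality $V_n^* V_m = \delta_{nm} \cdot 1$, and the partial sums $\sum_{n \leq N} V_n V_n^*$ converge entrywise to the identity matrix because every $i \in \N$ lies in exactly one $A_n$.

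Second, I would verify that $R$ absorbs countable orthogonal sums of its own contractions. For any sequence $(T_n)_{n \in \N}$ in $R$, define $\bigoplus_n T_n$ to be the operator whose matrix is block-diagonal with the block $M_{T_n}$ supported on the index set $A_n \times A_n$. The conditions of Theorem \ref{matrix representation of the operators} (finitely many entries outside $\Z_p$, and row/column convergence) are inherited blockwise from the individual $T_n$, so $\bigoplus_n T_n \in \B(\Q_p(\N))$, and Lemma \ref{norm of a matrix} gives norm $\sup_n \|T_n\| \leq 1$. In particular, the shift $S := \sum_{n \geq 1} V_{n+1} V_n^* \in R$ is well defined and satisfies $S\bigl(\bigoplus_n T_n\bigr) S^* = \bigoplus_n T_{n+1}'$ for the appropriately shifted sequence. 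Together with the pair $(V_1, V_1^*)$, realising the embedding $1 \oplus 0$, these are precisely Wagoner's axioms for an infinite sum ring.

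Third, I would conclude by the standard Eilenberg swindle. For any idempotent $e \in M_k(R)$, view it as an idempotent $\tilde e \in R$ (via any contractive embedding into the first $k$ of the $V_n$-subspaces) and set $\Sigma(\tilde e) := \bigoplus_n \tilde e \in R$. Then
\begin{align*}
\Sigma(\tilde e) \;=\; V_1 \tilde e\, V_1^{*} \;+\; S\, \Sigma(\tilde e)\, S^{*},
\end{align*}
where the two summands are orthogonal idempotents in $R$ that are Murray--von Neumann equivalent (inside $R$) to $\tilde e$ and to $\Sigma(\tilde e)$ respectively, via the intertwiners $V_1$ and $S$. This yields $[\tilde e] + [\Sigma(\tilde e)] = [\Sigma(\tilde e)]$ in $K_0(R)$, hence $[e] = [\tilde e] = 0$.

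The main obstacle I foresee is not the algebraic swindle, which is entirely formal once the ingredients are in place, but rather the verification that the infinite block-diagonal sums genuinely live in $\B_{(1)}$ rather than in some larger completion. This is where the non-Archimedean nature of the norm is decisive: the sup-formula in Lemma \ref{norm of a matrix} guarantees that a sup-bounded block-diagonal matrix defines a contractive operator, with no Archimedean-style growth when infinitely many blocks are combined. Everything else reduces cleanly to Theorem \ref{matrix representation of the operators} and Lemma \ref{norm of a matrix}.
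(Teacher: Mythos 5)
Your proposal is correct and follows essentially the same route as the paper: both decompose $\N$ into countably many infinite pieces, build the corresponding isometries (your $V_1$ and shift $S$ are the paper's $\beta_0$ and $\beta_1$, with $\alpha_i=\beta_i^*$), define the infinite block-diagonal repetition $a\mapsto a^\infty=\bigoplus_n a$, observe that the ultrametric norm formula keeps this in $\B_{(1)}$, and conclude $K_0=0$ by the infinite-sum-ring swindle. The only difference is cosmetic: the paper packages the argument via Corti\~nas's two-generator definition of an (infinite) sum ring and cites his Proposition 2.3.1, whereas you exhibit the full family $(V_n)$ and spell out the Eilenberg swindle directly.
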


\begin{proof} First, we show that it is a sum ring\footnote{ A \emph{sum ring} is a unital ring $R$ with elements $a_0, b_0, a_1, b_1\in R$ such that $a_0b_0=a_1b_1=1$ and $b_0a_0+b_1a_1=1$, cf. \cite{cortinas}, p. 10. In this case, $\boxplus\colon R\times R\rightarrow R, (x, y)\mapsto x\boxplus y=b_0xa_0+b_1ya_1$, is a unital ring homomorphism. An \emph{infinite sum ring} is a sum ring $R$ with a unital ring homomorphism $R\rightarrow R, a\mapsto a^\infty$, such that $a\boxplus a^\infty=a^\infty$ holds for all $a\in R$. According to \cite{cortinas}, Proposition 2.3.1, infinite sum rings always have vanishing $K_0$-group. }: Choose a decomposition of $X$ into a countable number $X_0, X_1, X_2, \ldots$ of countably infinite subsets (i.\,e. their disjoint union is $X$). Now, choose four operators $\alpha_0, \beta_0, \alpha_1, \beta_1\in \B_{(1)}(\mathbb Q_p(X))$ such that the following properties hold: 
$\alpha_0$ is a bijection of $X_0$ onto $X$ (here, we identify the elements $x\in X$ with the corresponding elements $\delta_x\in\mathbb Q_p(X)$) and maps the elements of $X_1\cup X_2\cup\ldots$ to $0$; 
$\beta_0$ maps $X$ bijectively onto $X_0$; 
$\beta_1$ maps, for each $n\in\mathbb N$, the elements of $X_n$ bijectively onto $X_{n+1}$; 
$\alpha_1$ maps, for each $n\in\mathbb N, n\geq 1$, the elements of $X_n$ bijectively onto $X_{n-1}$ and those of $X_0$ to $0$. 
Furthermore, one requires that $\alpha_0\beta_0|_X=\operatorname{id}_X$ and $\alpha_1\beta_1|_X=\operatorname{id}_X$. 

 Operators fulfilling these requirements are easily verified to satisfy the relations 
\begin{align*}
\alpha_0\beta_0=\alpha_1\beta_1=1, \beta_0\alpha_0+\beta_1\alpha_1=1 
\end{align*}
that imply that $\B_{(1)}(\mathbb Q_p(X))$ is a sum ring.

But $\B_{(1)}(\mathbb Q_p(X))$ is even an infinite sum ring: that is, for each operator $a\in \B_{(1)}(\mathbb Q_p(X))$, define $a^\infty$ to be the operator in $\B(\mathbb Q_p(X))$ that acts as a diagonal operator on each $X_n$ as if it acted as $a$ on $X$, i.\,e. more precisely that maps $x\in X_n$ to $\beta_1^n\beta_0 a\alpha_0\alpha_1^n x$. The operator $a^\infty$ lies in $\B_{(1)}(\mathbb Q_p(X))$ because its matrix admits no entries in $\mathbb Q_p\setminus \mathbb Z_p$ (as does the matrix of $a\in \B_{(1)}(\mathbb Q_p(X))$). As one has $a^\infty=\sum_{n\in\mathbb N}\beta_1^n\beta_0 a\alpha_0\alpha_1^n$ (pointwise limit), it is easy to see that it always satisfies the equation 
\begin{align*}
\beta_0 a\alpha_0+\beta_1 a^\infty \alpha_1=a^\infty. 
\end{align*}
This fact implies indeed that $\B_{(1)}(\mathbb Q_p(X))$ is an infinite sum ring (because the map $a\mapsto a^\infty$ is a unital ring homomorphism) and that $K_0(\B_{(1)}(\mathbb Q_p(X)))=0$. 
\end{proof}

It remains to prove that $K_0(\B(\Q_p(X)))=0$ for a countable set $X$. In the sequel, we will always (without loss of generality) assume $X=\N$ for simplicity. It is sufficient to show that each idempotent $e\in \B(\Q_p(\N))$ is stably equivalent to zero because $M_m(\B(\Q_p(\N)))\cong \B(\Q_p(\N\times\{1, \ldots, m\}))\cong \B(\Q_p(\N))$ for all $m\in \N, m\geq 1$. Our strategy will be the following: First, we construct an idempotent $f\in \B(\Q_p(\N))$ with the finite-dimensional image $\operatorname{im}f=\Q_pe_1+\ldots+\Q_pe_n$ (where the columns $e_1, \ldots, e_n$ of $e$ are chosen in such a way that they contain all entries of $e$ in $\Q_p\setminus\Z_p$) and with the further property that also $g=e-f$ is an idempotent with $\operatorname{im} g\subseteq\operatorname{im} e$ and $\|g\|\leq 1$. Second, we show the stable equivalence of $g$ and $e$ (which proves the result because of Lemma \ref{infinite sum ring}).

In the first step, we want to show that finite-dimensional subspaces have a complement: 

\begin{lemma}\label{finite-dimensional subspaces and their complement}
Let $e\in\B(\Q_p(\N))$ be an idempotent and define $U=e\Q_p(\N)$. Furthermore, let $V\subseteq U\cap c_0(\N, \Q_p)$ be a finite-dimensional $\Q_p$-vector space. Then, there exists a $\tau$-continuous $\|\ \|$-contractive idempotent endomorphism $\tilde f\colon U\rightarrow U$ such that $\operatorname{im}\tilde f=V$. 
\end{lemma}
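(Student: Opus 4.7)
The plan is to produce a basis $w_1,\ldots,w_n$ of $V$ and pairwise distinct indices $k_1,\ldots,k_n\in\N$ such that $w_i(k_j)=\delta_{ij}$ and $\|w_i\|=1$ for all $i,j$, and then to define
\begin{align*}
\tilde f(\xi):=\sum_{i=1}^n\xi(k_i)\,w_i
\end{align*}
and restrict this operator to $U$. Once such $w_i$ and $k_i$ are in hand, every property required of $\tilde f$ will read off immediately from $w_i(k_j)=\delta_{ij}$ and $\|w_i\|=1$.

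For the construction I would run a $p$-adic Gauss--Jordan elimination starting from an arbitrary basis $v_1,\ldots,v_n$ of $V$. Because $V\subseteq c_0(\N,\Q_p)$, every non-zero vector in $V$ has only finitely many coordinates above any given positive threshold, so its norm is attained at some coordinate. In step $i$, first rescale $v_i$ by an element of $\Q_p^\times$ so that $\|v_i\|=1$, pick $k_i$ with $|v_i(k_i)|_p=1$, rescale once more by $v_i(k_i)^{-1}\in\Z_p^\times$ to arrange $v_i(k_i)=1$, and finally replace $v_j$ by $v_j-v_j(k_i)v_i$ for every $j\neq i$. The ultrametric inequality gives $\|v_j-v_j(k_i)v_i\|\leq\max(\|v_j\|,|v_j(k_i)|_p)=\|v_j\|$, so no norm grows during the step; moreover the previous steps have already set $v_i(k_\ell)=0$ for $\ell<i$, so the new index $k_i$ is distinct from the earlier ones, and the identities $v_\ell(k_{\ell'})=\delta_{\ell\ell'}$ ($\ell,\ell'<i$) are preserved. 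After $n$ rounds one obtains $w_i:=v_i$ with $w_i(k_j)=\delta_{ij}$ and $\|w_i\|=1$ (since $|w_i(k_i)|_p=1$ forces $\|w_i\|\geq 1$, while the norm-non-increasing steps force $\|w_i\|\leq 1$).

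With these data, $\tilde f$ is visibly $\Z_p$-linear on $\Q_p(\N)$, and it factors through the $\tau$-continuous coordinate map $\Q_p(\N)\to\Q_p^n$, $\xi\mapsto(\xi(k_1),\ldots,\xi(k_n))$, followed by the map $(\lambda_1,\ldots,\lambda_n)\mapsto\sum_i\lambda_iw_i$. The second map is $\tau$-continuous because $w_i\in c_0(\N,\Z_p)$ makes the criterion of Lemma \ref{convergence in our Hilbert space} immediate. Contractivity follows at once from the ultrametric estimate $\|\tilde f(\xi)\|\leq\max_i|\xi(k_i)|_p\|w_i\|\leq\|\xi\|$. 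Idempotence follows from $\tilde f(w_j)=\sum_i w_j(k_i)w_i=w_j$, which also yields $\operatorname{im}\tilde f\supseteq V$; the reverse inclusion is clear from the definition. Since $V\subseteq U$, the operator restricts to an endomorphism $U\to U$ of the required kind.

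The only step that is not entirely routine is the Gauss--Jordan reduction, and its whole content lies in making the ultrametric estimate sharp at every stage. The normalizations $\|v_i\|=1$ and $|v_i(k_i)|_p=1$ are precisely what makes $\|v_j-v_j(k_i)v_i\|\leq\|v_j\|$ hold tightly, and the assumption $V\subseteq c_0$ is used exactly to choose $k_i$ so that the supremum defining the norm is actually attained at a coordinate.
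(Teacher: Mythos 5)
Your proof is correct and follows essentially the same route as the paper: normalize a basis of $V$ by elementary operations so that $\|w_i\|=1$ and $w_i(k_j)=\delta_{ij}$ at suitable pivot coordinates, then define $\tilde f(\xi)=\sum_i\xi(k_i)w_i$. You merely spell out the Gauss--Jordan reduction and the ultrametric norm bookkeeping that the paper leaves implicit.
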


\begin{proof} Choose a basis $(\tilde v_1, \ldots, \tilde v_m)$ for $V$. Using certain operations (addition of a multiple of a basis vector to another, multiplication of a basis vector with a number), it is possible to transform this basis into a basis $(v_1, \ldots, v_m)$ of $V$ such that $\|v_1\|=\ldots=\|v_m\|=1$ and with the property that for each $k=1, \ldots, m$, there is a number $a_k\in\N$ such that $v_i(a_k)=\delta_{ik}$ for $i=1, \ldots, m$ (where $v_i(a_k)$ is the $a_k$-th entry of $v_i$). 

 For an element $\xi\in U$, define now $\tilde f(\xi)=\xi(a_1)v_1+\ldots+\xi(a_m)v_m$. The function $\tilde f\colon U\rightarrow U$ satisfies the required properties of the lemma. 
\end{proof}

 Now, we can proceed to the announced decomposition of the idempotent $e$: 

\begin{lemma}\label{decomposition of idempotents}
Let $e\in\B(\Q_p(\N))$ be an idempotent and define $U=e\Q_p(\N)$. Let $e_i, i\in\N,$ be the columns of $e$ (considered as a matrix) and choose $n\in\N$ such that $e_i$ does not contain entries in $\Q_p\setminus\Z_p$ for $i>n$. Then, there is an idempotent $f\in \B(\Q_p(\N))$ with the properties that $fe=ef=f$, that $\operatorname{im} f$ is the finite-dimensional $\Q_p$-vector space $\Q_pe_1+\ldots+\Q_pe_n$ and that $e-f$ is an idempotent with $\|e-f\|\leq1$. 
\end{lemma}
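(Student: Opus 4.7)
The plan is to use Lemma \ref{finite-dimensional subspaces and their complement} to build a projection onto the finite-dimensional subspace $V := \Q_p e_1 + \ldots + \Q_p e_n$ sitting inside $U$, and then to define $f$ as this projection composed with $e$. The hypothesis on $n$ is chosen precisely so that removing the first $n$ columns of $e$ kills all entries outside $\Z_p$, which will pin down $\|e-f\| \le 1$.

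First, I would verify that $V$ fits the setting of the preceding lemma. Each column $e_i = e(\delta_i)$ lies in $U = \operatorname{im}(e)$, and by Theorem \ref{matrix representation of the operators} the columns of any operator in $\B(\Q_p(\N))$ have entries that tend to $0$, so $e_i \in c_0(\N,\Q_p)$. Hence $V \subseteq U \cap c_0(\N,\Q_p)$ is a finite-dimensional $\Q_p$-subspace, and Lemma \ref{finite-dimensional subspaces and their complement} furnishes a $\tau$-continuous contractive idempotent endomorphism $\tilde f\colon U \to U$ with $\operatorname{im}(\tilde f) = V$.

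Second, I would set $f := \tilde f \circ e \in \B(\Q_p(\N))$ and verify the algebraic relations. Since $e$ acts as the identity on $U$ and $\operatorname{im}(\tilde f) \subseteq U$, one gets $e\tilde f = \tilde f$, so $ef = \tilde f e = f$ and $fe = \tilde f e^2 = \tilde f e = f$; hence $f^2 = f(ef) = fe \cdot ... $ well more directly $f^2 = \tilde f e \tilde f e = \tilde f \tilde f e = \tilde f e = f$. Clearly $\operatorname{im}(f) = \tilde f(U) = V$. Setting $g := e-f$, the relations $ef=fe=f$ give $g^2 = e^2 - ef - fe + f^2 = e - f = g$, so $g$ is an idempotent with $\operatorname{im}(g) \subseteq \operatorname{im}(e)$.

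Third, I would bound $\|g\|$ via its matrix entries using Lemma \ref{norm of a matrix}. The $i$-th column of $g$ is $g\delta_i = e_i - \tilde f(e_i)$. For $i \le n$, the vector $e_i$ belongs to $V$ and $\tilde f$ acts as the identity on its image, so $\tilde f(e_i) = e_i$ and the column vanishes. For $i > n$, the column $e_i$ lies in $\Z_p(\N)$ by the defining property of $n$, so $\|e_i\| \le 1$, and contractivity of $\tilde f$ yields $\|\tilde f(e_i)\| \le 1$; the ultrametric inequality then gives $\|g\delta_i\| \le 1$. All matrix entries of $g$ therefore have $p$-adic absolute value at most $1$, whence $\|g\| \le 1$.

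The main conceptual obstacle is producing the contractive projection $\tilde f$, but this is already handled by the previous lemma, where the trick is to first normalize a basis of $V$ so that a suitable unit matrix block can be extracted and $\tilde f$ defined by simple coordinate readout. Once $\tilde f$ is available, the decomposition of $e$ is essentially forced: pre-composing with $e$ absorbs $\tilde f$ into the idempotent structure of $e$, and the numerical estimate on $\|e-f\|$ reduces to the combinatorics of which columns of $e$ can harbor entries of absolute value greater than $1$.
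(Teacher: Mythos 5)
Your proposal is correct and follows essentially the same route as the paper: apply Lemma \ref{finite-dimensional subspaces and their complement} to $V=\Q_p e_1+\ldots+\Q_p e_n$, set $f=\tilde f\circ e$, and bound $\|e-f\|$ column by column using that $\tilde f$ fixes $V$ and is contractive. Your explicit check that the columns $e_i$ lie in $c_0(\N,\Q_p)$ (via Theorem \ref{matrix representation of the operators}) is a small detail the paper leaves implicit, but otherwise the arguments coincide.
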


\begin{proof} Define $V$ to be the space $\Q_pe_1+\ldots+\Q_pe_n\subseteq U$ and apply the preceding lemma on it. Let $\tilde f\colon U\rightarrow U$ be the $\|\ \|$-contractive $\tau$-continuous idempotent of the preceding lemma with $\operatorname{im}f=V$. Define $f=\tilde f\circ e$. It is clear that $f$ is $\tau$-continuous (and therefore in $\B(\Q_p(\N))$) and that it is an idempotent with $\operatorname{im}f=V$. The equation $ef=fe=f$ follows from $V\subseteq U$. Furthermore, we obtain $(e-f)^2=e-fe-ef+f=e-f-f+f=e-f$ and $e-f$ is thus an idempotent.

We still have to show that $\|e-f\|\leq 1$. Let $k$ be in $\{0, \ldots, n\}$. A calculation yields 
\begin{align*}
(e-f)\delta_k=e\delta_k-(\tilde f\circ e)\delta_k=e_k-\tilde f e_k=e_k-e_k=0. 
\end{align*}
On the other hand, for $k\in\N, k>n$, we obtain 
\begin{align*}
\|(e-f)\delta_k\|=\|e\delta_k-f\delta_k\|=\|e_k-\tilde fe_k\|\leq\|e_k\|\vee\|\tilde fe_k\|\leq1 
\end{align*}
because $\tilde f$ is $\|\ \|$-contractive and $\|e_k\|\leq1$.

Hence, considered as a matrix, $e-f$ contains no entries in $\Q_p\setminus\Z_p$ and we have shown $\|e-f\|\leq1$. 
\end{proof}

\begin{lemma}\label{close projections and approximation}
Let $A$ be a closed sub-$\mathbb Z_p$-algebra of $\B(\mathbb Q_p(X))$ that contains the identity. Let $e \in A$ be an idempotent such that $e\neq 0$ and $a\in A$ such that $\|e-a\|<1/\|e\|^3$. Then, the sequence $(P_m(a))_{m\in\N}$ converges to an idempotent $e_a\in A$ that is equivalent to $e$. 
\end{lemma}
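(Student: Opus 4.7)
The plan is to combine Lemma \ref{general lemma on approximation} (which produces the candidate idempotent $e_a$ as the limit of $P_m(a)$) with Lemma \ref{close projections} (which then exhibits the equivalence of $e$ and $e_a$ in $A$). The whole argument is an ultrametric bookkeeping exercise, and the cubic bound in the hypothesis $\|e-a\|<1/\|e\|^3$ is calibrated precisely so that the two lemmas chain together.

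First I would observe that a nonzero idempotent $e$ satisfies $\|e\|\geq 1$, so in particular $\|e-a\|<1/\|e\|^3\leq 1\leq\|e\|$. By the strong triangle inequality this forces $\|a\|=\|e\|$; this single identity is the piece of information I will use repeatedly.

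Next, to apply Lemma \ref{general lemma on approximation} to $a$, I need the estimate $\|a^2-a\|<1/\|a\|^2$. Writing $a=e+(a-e)$ and using $e^2=e$, I obtain $a^2-a=(a-e)^2+e(a-e)+(a-e)e-(a-e)$; ultrametrically the dominant term is $\|e\|\cdot\|a-e\|$, which is strictly less than $\|e\|\cdot\|e\|^{-3}=\|a\|^{-2}$. Lemma \ref{general lemma on approximation} then produces an idempotent $e_a=\lim_m P_m(a)$ with $\|a-e_a\|<1/\|a\|=1/\|e\|$. Since $A$ is a closed $\Z_p$-subalgebra containing $1$ and $a$ and each $P_m$ has integer coefficients, every $P_m(a)$ lies in $A$, hence so does the limit $e_a$.

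Finally, to invoke Lemma \ref{close projections}, I would estimate $\|e-e_a\|\leq\max\{\|e-a\|,\|a-e_a\|\}<1/\|e\|$, using $1/\|e\|^3\leq 1/\|e\|$. Lemma \ref{close projections} then delivers an invertible element of $A$ conjugating $e$ to $e_a$, which concludes the proof. I do not foresee any genuine obstacle; the only delicate point is the factor $\|e\|^3$ in the hypothesis, which is exactly what is needed so that the bound on $\|a^2-a\|$ just clears the threshold $1/\|a\|^2$ demanded by Lemma \ref{general lemma on approximation} while still leaving enough slack to reach the threshold $1/\|e\|$ required by Lemma \ref{close projections}.
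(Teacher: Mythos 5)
Your proposal is correct and follows essentially the same route as the paper's own proof: the same norm identity $\|a\|=\|e\|$, the same expansion giving $\|a^2-a\|\leq\|e-a\|\,\|a\|<1/\|a\|^2$, and the same chaining of Lemma \ref{general lemma on approximation} with Lemma \ref{close projections} via the ultrametric estimate $\|e-e_a\|<1/\|e\|$. No gaps.
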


 The polynomials $P_m\in\Z[x], m\in\N, m\geq1$, have been defined in the proof of Lemma \ref{general lemma on approximation}. \\

\begin{proof} First, we obtain $\|e\|\geq1$ and $\|e-a\|<1/\|e\|^3\leq1\leq\|e\|$ and hence $\|a\|=\|e\|\geq1$. 
 Now, notice that $((e-a)+a)^2=(e-a)+a$, i.\,e. $(e-a)^2+a^2+(e-a)a+a(e-a)=(e-a)+a$ or 
\begin{align*}
\|a^2-a\|=\|(e-a)-(e-a)^2-(e-a)a-a(e-a)\|\leq\|e-a\|\|a\|<1/\|a\|^2. 
\end{align*}
Recall from the proof of Lemma \ref{general lemma on approximation} that the sequence $P_m(a)$ converges to an idempotent element $e_a\in A$ such that $\|a-e_a\|<1/\|a\|=1/\|e\|$. We therefore obtain that also $\|e-e_a\|<1/\|e\|$ holds. According to Lemma \ref{close projections}, the idempotents $e$ and $e_a$ are equivalent with respect to $A$. 
\end{proof}

\begin{lemma}\label{idempotents with finite-dimensional image}
Let $f\in \B(\Q_p(\N))$ be an idempotent whose image is a finite-dimen-sional $\Q_p$-vector space. Then, $f$ is stably equivalent to zero. 
\end{lemma}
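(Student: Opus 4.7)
My plan is to conjugate $f$ within $\B(\Q_p(\N))$ to an idempotent lying in the sub-$\Z_p$-algebra $\B_{(1)}(\Q_p(\N))$, and then invoke Lemma \ref{infinite sum ring}. As a preliminary observation, since $V := \operatorname{im}(f)$ is a finite-dimensional $\Q_p$-vector space contained in $\Q_p(\N)$, every $v \in V$ admits all scalar multiples $p^{-N}v$ again in $\Q_p(\N)$, which forces the entries of $v$ to tend to zero $p$-adically and thus $V \subseteq c_0(\N, \Q_p)$. The construction of Lemma \ref{finite-dimensional subspaces and their complement} then furnishes a basis $v_1, \ldots, v_n$ of $V$ with $\|v_i\| = 1$ and indices $a_1, \ldots, a_n \in \N$ satisfying $v_i(a_j) = \delta_{ij}$.

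Next I would define $g \in \B(\Q_p(\N))$ by $g(\delta_{a_i}) = v_i$ and $g(\delta_k) = \delta_k$ for $k \notin \{a_1, \ldots, a_n\}$; its inverse is given by $g^{-1}(v_i) = \delta_{a_i}$ and $g^{-1}(\delta_k) = \delta_k$ otherwise. By Theorem \ref{matrix representation of the operators}, both $g$ and $g^{-1}$ have matrix entries in $\Z_p$ and therefore lie in $\B_{(1)}(\Q_p(\N))$. The conjugate $f' := g^{-1} f g$ is an idempotent with image $\Q_p\delta_{a_1} \oplus \cdots \oplus \Q_p\delta_{a_n}$; writing $f'(\delta_k) = \sum_{l=1}^n b_{lk}\, \delta_{a_l}$, one checks that $b_{l, a_j} = \delta_{lj}$, that $\lim_{k \to \infty} b_{lk} = 0$ for each $l$, and that only finitely many $b_{lk}$ lie outside $\Z_p$.

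To absorb those remaining entries, I would let $m_k$ be the least non-negative integer such that $p^{m_k} b_{lk} \in \Z_p$ for every $l$; then $m_k = 0$ for all but finitely many $k$, and in particular $m_{a_j} = 0$. Taking $D \in \B(\Q_p(\N))$ to be the diagonal operator $D\delta_k = p^{m_k}\delta_k$, both $D$ and $D^{-1}$ belong to $\B(\Q_p(\N))$ by Theorem \ref{matrix representation of the operators}, since only finitely many diagonal entries lie outside $\Z_p$. A short matrix computation then gives that $f'' := D^{-1} f' D = (gD)^{-1} f (gD)$ is an idempotent with all matrix entries in $\Z_p$, i.e.\ $f'' \in \B_{(1)}(\Q_p(\N))$.

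Since $gD$ is invertible in $\B(\Q_p(\N))$, the idempotents $f$ and $f''$ are then equivalent there, so $[f] = [f'']$ in $K_0(\B(\Q_p(\N)))$. Lemma \ref{infinite sum ring} gives $[f''] = 0$ in $K_0(\B_{(1)}(\Q_p(\N)))$, and functoriality of $K_0$ applied to the unital inclusion $\B_{(1)}(\Q_p(\N)) \hookrightarrow \B(\Q_p(\N))$ transports this to $[f''] = 0$ in $K_0(\B(\Q_p(\N)))$, whence $f$ is stably equivalent to zero. The hard part will be the diagonal rescaling: the exponents $m_k$ have to be chosen simultaneously large enough to pull every $b_{lk}$ into $\Z_p$, yet zero for large $k$ and for $k \in \{a_1, \ldots, a_n\}$, so that both $D$ and $D^{-1}$ remain in $\B(\Q_p(\N))$ and the conjugation still fixes the target subspace pointwise.
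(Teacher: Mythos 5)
Your argument is correct, but it takes a genuinely different route from the paper's. The paper treats $f$ as a compact operator, approximates it in norm by a finitely supported matrix $a$ with $\|a-f\|<1/\|f\|^3$, and runs the polynomial iteration $P_m(a)$ of Lemma \ref{close projections and approximation} to produce an idempotent with only finitely many nonzero entries that is equivalent to $f$; finite-dimensional linear algebra then reduces this to a coordinate projection, which is killed in $K_0$. You instead construct an explicit invertible element $gD\in\B(\Q_p(\N))$ conjugating $f$ into the contractive subalgebra $\B_{(1)}(\Q_p(\N))$ and quote $K_0(\B_{(1)}(\Q_p(\N)))=0$ together with functoriality under the unital inclusion. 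Your route bypasses the approximation machinery (Lemmas \ref{general lemma on approximation} and \ref{close projections and approximation}) entirely and is more hands-on: it rests on the correct observation that $\operatorname{im}(f)\subseteq c_0(\N,\Q_p)$ and on the normalized basis with coordinate indices $a_1,\dots,a_n$ from Lemma \ref{finite-dimensional subspaces and their complement}, and the diagonal rescaling you flag as the delicate step does go through, since $m_k=0$ for all but finitely many $k$ and $m_{a_j}=0$ guarantee $D,D^{-1}\in\B(\Q_p(\N))$ while $|p^{m_k}|_p\leq1$ preserves the decay of the rows of $f''$. The one point you should make explicit is the invertibility of $g$: prescribing values of a candidate inverse on $\{v_1,\dots,v_n\}\cup\{\delta_k;k\notin\{a_1,\dots,a_n\}\}$ does not by itself define an operator, since this set is not a priori a topological basis. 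The clean fix is to write $g=1+h$ with $h(\xi)=\sum_{i=1}^n\xi(a_i)(v_i-\delta_{a_i})$ and to note that $(v_i-\delta_{a_i})(a_j)=0$ forces $h^2=0$, so $g^{-1}=1-h$ exists and has entries in $\Z_p$. With that line added, your proof is complete; the paper's version buys slightly more (an equivalent idempotent supported on a finite block) at the cost of invoking the $P_m$-approximation lemmas it needs elsewhere anyway.
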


\begin{proof} The case $f=0$ is obvious; assume therefore $f\neq0$. 
 Observe that, as $f$ has a finite dimensional image, it must be a compact operator. Therefore, its entries converge to zero.
 
 Choose an element $a\in \B(\Q_p(\N))$ that has (considered as a matrix) only finitely many non-vanishing entries and satisfies $\|a-f\|<1/\|f\|^3$. Choose $n\in\N$ such that the entry $a_{ij}$ of $a$ is zero if $i>n$ or $j>n$, i.\,e. such that $a\in M_{\{0, \ldots, n\}}(\Q_p)\subseteq \B(\Q_p(\N))$. On the one hand, the polynomials $P_m(a)$ will, according to Lemma \ref{close projections and approximation}, converge in norm to an idempotent $e_a\in \B(\Q_p(\N))$ that is equivalent to $f$. On the other hand, as the polynomials $P_m$ have no constant term, $P_m(a)$ never leaves the set $M_{\{0, \ldots, n\}}(\Q_p)\subseteq \B(\Q_p(\N))$ and hence, also $e_a$ has only finitely many non-vanishing entries. It is a well-known fact from linear algebra that $e_a$ is equivalent to a matrix of the form 
\begin{align*}
D=\begin{bmatrix} E_N & 0 \\ 0 & 0 \end{bmatrix} 
\end{align*}
where $E_N$ is the $N\times N$-unity matrix ($N\in\N$) and $0$ means vanishing matrices of appropriate size. At the end, we obtain that $D$ and $e$ are equivalent matrices and therefore, $e$ is stably equivalent to zero. 
\end{proof}

\begin{theorem}\label{K-theory for the continuous operators}
We have $K_0(\B(\Q_p(\N)))=0$. 
\end{theorem}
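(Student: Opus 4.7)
The plan is to combine the preceding three lemmas. Since $M_m(\B(\Q_p(\N))) \cong \B(\Q_p(\N \times \{1, \ldots, m\})) \cong \B(\Q_p(\N))$, every class in $K_0(\B(\Q_p(\N)))$ is represented by a single idempotent $e\in\B(\Q_p(\N))$, and it suffices to show $[e]=0$ for each such $e$.

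The first step is to split $e$ using Lemma~\ref{decomposition of idempotents}: writing $f$ for the idempotent produced there and $g:=e-f$, I obtain $fe=ef=f$, $\|g\|\le 1$, and $g$ is an idempotent whose image lies in $\operatorname{im}e$. A short calculation from $ef=fe=f$ gives $fg = fe - f^{2} = 0$ and $gf = ef - f^{2} = 0$, so $f$ and $g$ are orthogonal idempotents with $e = f+g$. Consequently $[e] = [f] + [g]$ in $K_0(\B(\Q_p(\N)))$, and it is enough to kill the two summands separately.

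For $[f]$, Lemma~\ref{idempotents with finite-dimensional image} (whose proof used Lemma~\ref{close projections and approximation} to replace $f$ by a finite matrix idempotent) gives $[f] = 0$ directly. For $[g]$, the inequality $\|g\| \le 1$ places $g$ inside the subring $\B_{(1)}(\Q_p(\N))$, so $g$ represents a class in $K_0(\B_{(1)}(\Q_p(\N)))$. By Lemma~\ref{infinite sum ring} this group already vanishes, and functoriality of $K_0$ applied to the inclusion $\B_{(1)}(\Q_p(\N)) \hookrightarrow \B(\Q_p(\N))$ sends this zero class to $[g]$ in the target; hence $[g]=0$. Combining, $[e] = 0$.

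The genuine difficulties have already been overcome in the preceding lemmas: constructing the norm-controlled decomposition $e = f+g$ (Lemma~\ref{decomposition of idempotents}), showing the finite-rank piece is stably trivial via Mahler-type polynomial approximation (Lemma~\ref{idempotents with finite-dimensional image}), and exhibiting the infinite-sum-ring structure on $\B_{(1)}(\Q_p(\N))$ (Lemma~\ref{infinite sum ring}). The only thing left here is the orthogonality verification $fg = gf = 0$, which is immediate from the relations in Lemma~\ref{decomposition of idempotents}; after that, the proof is pure assembly.
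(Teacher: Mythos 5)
Your proof is correct and takes essentially the same route as the paper's: reduce to a single idempotent via the isomorphism $M_m(\B(\Q_p(\N)))\cong\B(\Q_p(\N))$, split $e=f+(e-f)$ using Lemma~\ref{decomposition of idempotents}, and kill the two orthogonal summands via Lemma~\ref{idempotents with finite-dimensional image} and Lemma~\ref{infinite sum ring} respectively. The explicit verification $fg=gf=0$ is a detail the paper states without computation; otherwise the two arguments coincide.
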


\begin{proof} Let $e$ be an idempotent in $M_m(\B(\Q_p(\N)))$. We want to show that $e$ is stably equivalent to zero. Because of the isomorphism $M_m(\B(\Q_p(\N)))\cong \B(\Q_p(\N\times\{1, \ldots, m\}))\cong \B(\Q_p(\N))$ for all $m\in \N, m\geq 1$, it is sufficient to treat the case $e\in \B(\Q_p(\N))$. Consider the decomposition $e=f+(e-f)$ stemming from Lemma \ref{decomposition of idempotents}. As we have $f(e-f)=(e-f)f=0$, we obtain that the stable equivalence class of $e$ is exactly the sum of the stable equivalence classes of $f$ and of $e-f$. But the stable equivalence class of $f$ is zero according to Lemma \ref{idempotents with finite-dimensional image} and the stable equivalence class of $e-f$ is zero as well because of Lemma \ref{infinite sum ring} (since $\|e-f\|\leq1$). Hence, the proof is finished. 
\end{proof}

\subsection{Lifting of idempotents in $\B(\Q_p(X))/\K(\Q_p(X))$}\label{Calkin lifting}

\begin{theorem}\label{lifting of idempotents}
Let $A\subseteq \B_{(1)}(\Q_p(\N))$ be a norm-closed subalgebra containing the set $\K_{(1)}(\Q_p(\N))$ of contractive compact operators. If $E$ is an idempotent element in the quotient algebra $A/\K_{(1)}(\Q_p(\N))$, then it has an idempotent lift $e$ in $A$, i.\,e. $e^2=e\in A$ and $e+\K_{(1)}(\Q_p(\N))=E$. 
\end{theorem}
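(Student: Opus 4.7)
The plan is to mimic the classical Archimedean lifting argument (use functional calculus on a lift $a$ of $E$) by invoking the ultrametric Newton-iteration of Lemma \ref{general lemma on approximation}. That lemma applied to a contraction requires $\|a^2 - a\| < 1$, so starting with any lift $a \in A$ of $E$ (which a priori only satisfies $a^2 - a \in \K_{(1)}$), the key task is to modify $a$ by a compact correction $k \in \K_{(1)}$ so that $a' := a + k$ satisfies $\|(a')^2 - a'\| \le 1/p$. Once this is achieved, Lemma \ref{general lemma on approximation} produces an idempotent $e = \lim_m P_m(a') \in A$ (using the norm-closedness of $A$); a telescoping argument---each $P_{m+1}(a') - P_m(a') = ((a')^2 - a')^m(\alpha_m a' + \beta_m)$ is compact because $(a')^2 - a' = c + ak + ka + k^2 - k$ lies in the ideal $\K_{(1)}$---combined with the norm-closedness of $\K_{(1)}$ gives $e - a' \in \K_{(1)}$, whence $e - a = (e - a') + k \in \K_{(1)}$.

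To construct $k$ I would work modulo $p$. By Theorem \ref{matrix representation of the operators}, the reduction $\bar a$ of $a \in \B_{(1)}$ is an $\F_p$-matrix whose rows and columns are finitely supported, and the compactness of $c$ (Lemma \ref{compact operators and the norm-topology}) forces $\bar c = \bar a^2 - \bar a$ to have only finitely many nonzero entries. It then suffices to exhibit a finite-support $\F_p$-matrix $\bar k$ such that $\bar a + \bar k$ is idempotent in the matrix ring, since an entrywise lift of $\bar k$ to coefficients in $\{0,1,\ldots,p-1\}\subseteq\Z_p$ furnishes a finite-rank contractive $k \in \K_{(1)}$; the identity $\overline{(a')^2 - a'} = \bar{a'}^2 - \bar{a'} = 0$ then gives $(a')^2 - a' \in p\B_{(1)}$.

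For this algebraic sub-claim I view $\bar a$ as an endomorphism of $V = \F_p(\N)$ and consider $W_0 := \operatorname{im}(\bar c)$, which is finite-dimensional. The subspace $W := W_0 + \bar a(W_0)$ is also finite-dimensional and $\bar a$-invariant (from $\bar a^2 = \bar a + \bar c$). Let $F \subseteq \N$ be a finite set containing the supports of a basis of $W$, and decompose $V = W \oplus W'$ with $W' := \F_p(\N \setminus F) \oplus U$ for any complement $U$ of $W$ in $\F_p(F)$. In the resulting block form $\bar a = \left(\begin{smallmatrix}\bar a_W & B \\ 0 & \tilde a\end{smallmatrix}\right)$ the lower-right block $\tilde a$ on $W'$ is automatically idempotent (because $\bar c$ has image in $W$), and a direct check shows $B\colon W' \to W$ has matrix support contained in $F \times J$, where $J := \bigcup_{i \in F}\{j : \bar a_{ij} \neq 0\}$ is finite (each row of $\bar a$ is finitely supported). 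Then $\bar e := \left(\begin{smallmatrix}0 & 0 \\ 0 & \tilde a\end{smallmatrix}\right)$ is idempotent, and $\bar k := \bar e - \bar a$ has matrix support in $F \times (F \cup J)$, as required. The hard part is this algebraic construction: ensuring that $\bar k$ has genuine finite matrix support (and not merely finite rank) relies on the specific choice of $W'$ containing $\F_p(\N \setminus F)$, so that only the finitely many rows of $\bar a$ indexed by $F$ interact with the modification.
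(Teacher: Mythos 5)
Your proposal is correct, but it reaches the key inequality $\|b^2-b\|<1$ by a genuinely different route than the paper. The paper keeps the lift $a$ fixed and passes to its powers: since $a^n-a=(a^2-a)b_n=b_n(a^2-a)$ with $\|b_n\|\le 1$, all entries of $a^n-a$ of absolute value $1$ are confined to one fixed finite block $\{0,\ldots,N\}^2$ uniformly in $n$; a pigeonhole/compactness argument in $\Z_p^{(N+1)^2}$ then produces $m>n$ with $\|a^m-a^n\|<1$, and the trick $b:=a^{k(m-n)}$ with $k(m-n)>n$ yields $\|b^2-b\|<1$ while $b-a$ stays compact. You instead keep the exponent $1$ and perturb $a$ itself by an explicit finite-rank contractive $k$, built by reducing modulo $p$ and doing linear algebra over $\F_p$: the finite-dimensional $\bar a$-invariant space $W=\operatorname{im}\bar c+\bar a(\operatorname{im}\bar c)$, the block decomposition adapted to $\F_p(\N\setminus F)$, and the observation that the induced map on the complement is already idempotent because $\operatorname{im}\bar c\subseteq W$; your care in choosing $W'$ to contain $\F_p(\N\setminus F)$ so that $\bar k$ has finite \emph{matrix} support (not just finite rank) is exactly the point that makes $k$ land in $\K_{(1)}$. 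Both arguments then finish identically via Lemma \ref{general lemma on approximation} and the telescoping/ideal argument giving $e-a\in\K_{(1)}$. Your route is longer but more constructive and proves a stronger intermediate statement (any contractive lift becomes idempotent modulo $p\B_{(1)}$ after a finite-rank contractive correction), whereas the paper's route avoids mod-$p$ reduction entirely at the price of a non-effective pigeonhole step. The only point you should make explicit is that reduction mod $p$ is a ring homomorphism on $\B_{(1)}(\Q_p(\N))$ --- the infinite sums defining matrix products reduce termwise because, by Theorem \ref{matrix representation of the operators}(b), each row of $\bar a$ is finitely supported mod $p$ --- but this is routine.
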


\begin{proof} Choose an arbitrary lift $a\in A$ of $E$. Then, we get $a^2-a\in \K_{(1)}(\Q_p(\N))$ and also $a^n-a=(a^{n-2}+\ldots+a+1)(a^2-a)\in \K_{(1)}(\Q_p(\N))$ for $n>2$. Observe that there is a number $N\in \N$ such that for all $n\in \N$, the entries of $a^n-a$ (considered as a matrix) at the positions $(i, j)\in\N^2\setminus\{0, \ldots, N\}^2$ have absolute value smaller than $1$ (because the entries of $a^2-a$ converge to zero and one can write $a^n-a=(a^2-a)b_n=b_n(a^2-a)$ for an operator $b_n$ with $\|b_n\|\leq1$).

 Therefore, there must be $m, n\in\N$ with $n<m$ such that $\|a^m-a^n\|<1$: For $i, j\in\N$ such that $i>N$ or $j>N$, the entries of $a^n-a$ and $a^m-a$ (hence of $a^m-a^n$) at the position $(i, j)$ have absolute value smaller than $1$ anyway and for the finitely many positions in $\{1, \ldots, N\}^2$, the entries of $a^m-a$ and $a^n-a$ become arbitrarily close for certain $m, n$ for compactness reasons (we used $a^m-a^n=(a^m-a)-(a^n-a)$). Now, choose $k\in\N$ such that $k(m-n)>n$. Then, we also have $\|a^{(k+1)(m-n)}-a^{k(m-n)}\|<1$ and thus $\|a^{2k(m-n)}-a^{k(m-n)}\|<1$.
 
 Finally, apply our usual technique: As $b=a^{k(m-n)}$ and its square have distance less than $1$, the sequence of polynomials $P_l(b)$ (defined in the proof of Lemma \ref{general lemma on approximation}) converges to an idempotent $e$ for $l\rightarrow\infty$ that has distance less than $1$ from $b$. As all the operators $P_l(b)-b$ are of the form $(b^2-b)Q(b)$ (where $Q$ is a polynomial with coefficients in $\Z$), they are all compact, as well as $b-a$ and therefore $P_l(b)-a$. As the ideal of compact operators $\K_{(1)}(\Q_p(\N))$ is norm-closed in $\B_{(1)}(\Q_p(\N))$, we obtain that also $e-a$ is compact, i.\,e. $e$ is an idempotent lift of $E$. Note that in the whole procedure, we did not leave the algebra $A$ (even if it is non-unital) because we assumed it to be norm-closed, i.\,e. $e\in A$. That finishes the proof. 
\end{proof}

 As the operators in $\B(\Q_p(\N))$ have (considered as matrices) only finitely many entries not in $\Z_p$ and differ therefore only by a compact difference from operators in $\B_{(1)}(\Q_p(\N))$, we easily get the following corollary: 

\begin{corollary}\label{lifting of idempotents, great}
Let $A\subseteq \B(\Q_p(\N))$ be a norm-closed subalgebra containing the set $\K(\Q_p(\N))$ of compact operators. If $E$ is an idempotent element in the quotient algebra $A/\K(\Q_p(\N))$, then it has an idempotent lift $e$ in $A$, i.\,e. $e^2=e\in A$ and $e+\K(\Q_p(\N))=E$. 
\end{corollary}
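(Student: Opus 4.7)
The plan is to reduce this corollary to Theorem \ref{lifting of idempotents} by replacing an arbitrary lift of $E$ with one that lives in the contractive subalgebra $\B_{(1)}(\Q_p(\N))$. The key input for this reduction is Theorem \ref{matrix representation of the operators}(a), which guarantees that every $a\in\B(\Q_p(\N))$ has at most finitely many matrix entries outside $\Z_p$.

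Concretely, given the idempotent $E\in A/\K(\Q_p(\N))$, I would first pick any lift $a\in A$. Let $F\in\B(\Q_p(\N))$ be the finite-rank operator whose matrix has exactly the (finitely many) entries of $a$ lying in $\Q_p\setminus\Z_p$ at their respective positions and zeros elsewhere. As a finite-rank operator $F$ is compact by Lemma \ref{compact operators and the norm-topology}, hence $F\in\K(\Q_p(\N))\subseteq A$, so $a':=a-F\in A$. By construction all entries of $a'$ lie in $\Z_p$, so $\|a'\|\leq 1$ by Lemma \ref{norm of a matrix}, and therefore $a'\in A':=A\cap\B_{(1)}(\Q_p(\N))$. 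Note that $a'$ still projects to $E$ in $A/\K(\Q_p(\N))$ since $F$ is compact.

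Next I would verify the hypotheses of Theorem \ref{lifting of idempotents} for the algebra $A'$: norm-closedness follows since $A'$ is the intersection of two norm-closed sets; closure under the algebra operations follows from the ultrametric inequality and submultiplicativity of the operator norm (which preserve the unit ball); and $\K_{(1)}(\Q_p(\N))=\K(\Q_p(\N))\cap\B_{(1)}(\Q_p(\N))\subseteq A'$ since $\K(\Q_p(\N))\subseteq A$. It remains to check that $a'+\K_{(1)}(\Q_p(\N))$ is idempotent in $A'/\K_{(1)}(\Q_p(\N))$. Because the compact operators form a two-sided ideal and $F$ is compact, $(a')^2-a'$ differs from $a^2-a\in\K(\Q_p(\N))$ only by terms involving $F$, hence is itself compact; on the other hand $a'\in\B_{(1)}(\Q_p(\N))$ gives $\|(a')^2-a'\|\leq 1$, so $(a')^2-a'\in\K_{(1)}(\Q_p(\N))$ as required.

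Applying Theorem \ref{lifting of idempotents} to $A'$ then yields an idempotent $e\in A'$ with $e-a'\in\K_{(1)}(\Q_p(\N))$; writing $e-a=(e-a')-F$ exhibits $e-a$ as a sum of two compact operators, so $e+\K(\Q_p(\N))=E$, and $e\in A'\subseteq A$ is the desired idempotent lift. The only real subtlety is bookkeeping: confirming that $a'$ actually represents an idempotent class in the finer quotient $A'/\K_{(1)}(\Q_p(\N))$ rather than merely in $A/\K(\Q_p(\N))$, which requires the compactness of $(a')^2-a'$ from the ideal property of $\K(\Q_p(\N))$ and the norm bound $\|(a')^2-a'\|\leq 1$ from our finite-rank surgery to cooperate. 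Once this step is secured, the rest is a routine application of the already-established contractive lifting theorem.
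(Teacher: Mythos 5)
Your proposal is correct and follows exactly the route the paper sketches in the sentence preceding the corollary: subtract the finite-rank (hence compact) operator carrying the finitely many matrix entries outside $\Z_p$ to obtain a contractive lift, then apply Theorem \ref{lifting of idempotents} to $A\cap\B_{(1)}(\Q_p(\N))$. Your version merely fills in the routine verifications (that $A\cap\B_{(1)}(\Q_p(\N))$ is a norm-closed subalgebra containing $\K_{(1)}(\Q_p(\N))$ and that $(a')^2-a'\in\K_{(1)}(\Q_p(\N))$), all of which are sound.
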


\section*{Acknowledgments}

This research was supported in part by the ERC Consolidator Grant No. 681207. We thank the referee for useful comments that improved the exposition of the results. The results presented in this paper are part of the PhD project of the first author.

% Bis hier schreiben.
%%%%%%%%%%%%%%%%%%%%%%%%%%%%%%%%%%%%%%%%%%%%%%%%%%%%%%%%%%%%%%%%%%%%%%%%%%%%%%%%

\nocite*{}
%\bibliography{lit-aspects-article} % Laden der Datei 'literaturverzeichnisneu.bib'

%\newpage~
%\thispagestyle{empty}
%\cleardoublepage

%\input{./erklaerung.tex} % Einfuegen der externen Datei

\end{document}